\newtheorem{theorem}{Theorem}[section] 
\newtheorem{lemma}[theorem]{Lemma}     
\newtheorem{corollary}[theorem]{Corollary}
\newtheorem{proposition}[theorem]{Proposition}
\theoremstyle{definition}
\newtheorem{definition}[theorem]{Definition}
\newtheorem{remark}[theorem]{Remark}
\newtheorem{question}[theorem]{Question}
\newtheorem{example}[theorem]{Example}
\newcommand{\st}{\,|\,}
\newcommand{\vspan}{\mathrm{span}}
\newcommand{\Fl}{\mathrm{Fl}}
\newcommand{\GL}{\mathrm{GL}}
\newcommand{\Hilb}{\mathrm{Hilb}}
\newcommand{\Frob}{\mathrm{Frob}}
\newcommand{\la}{\lambda}
\newcommand{\im}{\mathrm{im}}
\newcommand{\SYT}{\mathrm{SYT}}
\newcommand{\fl}{\mathrm{f\hspace{0.25pt}l}}
\newcommand{\inv}{\mathrm{inv}}
\newcommand{\sh}{\mathrm{sh}}
\newcommand{\bx}{\mathbf{x}}
\newcommand{\fgl}{\mathfrak{gl}}
\newcommand{\ft}{\mathfrak{t}}
\newcommand{\rk}{\mathrm{rk}}
\begin{document}




\title{Springer fibers and the Delta Conjecture at $t=0$}

%
%
%
%
%
%
%

\author{Sean T. Griffin}
\thanks{Sean T. Griffin was partially supported by NSF Grant DMS-1439786 while in residence at the Institute for Computational and Experimental Research in Mathematics in Providence, RI, during the Spring 2021 semester.}
\address{Department of Mathematics, University of California Davis, Davis, CA, USA}
\email{stgriffin@ucdavis.edu}

\author{Jake Levinson}
\thanks{Jake Levinson was partially supported by an AMS Simons Travel Grant.}
\address{Department of Mathematics, Simon Fraser University, Burnaby, BC, Canada}
\email{jake\_levinson@sfu.ca}

\author{Alexander Woo}
\thanks{Alexander Woo was partially supported by Simons Collaboration Grant 359792}
\address{Department of Mathematics and Statistical Science, University of Idaho, Moscow, ID, USA}
\email{awoo@uidaho.edu}

\begin{abstract}
  We introduce a family of varieties $Y_{n,\lambda,s}$, which we call
  the \emph{$\Delta$-Springer varieties}, that generalize the type A
  Springer fibers. We give an explicit presentation of the cohomology
  ring $H^*(Y_{n,\lambda,s})$ and show that there is a symmetric group
  action on this ring generalizing the Springer action on the
  cohomology of a Springer fiber. In particular, the top cohomology
  groups are induction products of Specht modules with trivial modules. The $\lambda=(1^k)$ case of this construction gives a compact geometric realization for the expression in the Delta Conjecture at $t=0$. Finally, we generalize results of De Concini and Procesi on the scheme of diagonal nilpotent matrices by  constructing an ind-variety $Y_{n,\lambda}$ whose cohomology ring is isomorphic to the coordinate ring of the scheme-theoretic intersection of an Eisenbud--Saltman rank variety and diagonal matrices.
\end{abstract}


%
%
%
%
%
%
%



\maketitle

\section{Introduction}\label{sec:Introduction}
In this article, we introduce a family of varieties generalizing the Springer fibers, which we call the {\bf $\Delta$-Springer varieties}. We establish an explicit presentation of the cohomology ring of a $\Delta$-Springer variety generalizing the one given by Tanisaki for the case of a Springer fiber, showing that this cohomology ring is the ring $R_{n,\lambda,s}$ introduced by the first author~\cite{GriffinOSP}. As a special case, our construction gives a new \emph{compact} geometric realization of the  expression in the Delta Conjecture at $t=0$. We also prove a version of the Springer correspondence for this family of varieties, showing that the top cohomology group has the $S_n$-module structure of an induction product of a Specht module and a trivial module. Finally, we generalize work of De Concini and Procesi~\cite{dCP} by introducing an ind-variety whose cohomology ring coincides with the coordinate ring of the scheme-theoretic intersection of an Eisenbud--Saltman rank variety with diagonal matrices. This is the full version of the extended abstract~\cite{GLW-FPSAC}.

\subsection{Background and context}

In the seminal work~\cite{Springer-TrigSum,Springer-WeylGrpReps}, Springer introduced a family of varieties associated to any complete flag variety $G/B$, now called Springer fibers, that have remarkable connections to the representation theory of Weyl groups. 
Springer constructed a nontrivial action of the Weyl group on the cohomology ring of a Springer fiber that does not come from any action on the Springer fiber itself. In particular, in type A, where the Weyl group is the symmetric group $S_n$, Springer proved the \textbf{Springer correspondence}, which states that the highest degree nonzero cohomology group of a Springer fiber is an irreducible representation of $S_n$, and every finite dimensional irreducible representation appears this way. Since Springer's original work, several other geometric constructions of Springer's representation (up to tensoring with the sign representation) have been discovered~\cite{CG,Lusztig,Slodowy}.

The graded $S_n$-module type of the entire cohomology ring of a Springer fiber was computed by Hotta and Springer~\cite{Hotta-Springer}. Under the Frobenius characteristic map $\Frob$ that associates a symmetric function to each $S_n$-module, the cohomology ring of a Springer fiber is sent to the \textbf{modified Hall--Littlewood symmetric function} 
\begin{align}
\Frob(H^*(\cB^\lambda;\bQ);q) = \widetilde H_\lambda(\bx;q^2),
\end{align}
where the $q$ on the left-hand side keeps track of the grading of the cohomology ring.

De Concini and Procesi~\cite{dCP} and Tanisaki~\cite{Tanisaki} found an explicit presentation for the graded ring $H^*(\cB^\lambda;\bQ)$ as a quotient $R_\lambda$ of the polynomial ring $\bQ[x_1,\dots, x_n]$. The $S_n$-module structure on $H^*(\cB^\lambda;\bQ)$ is compatible with this presentation in the sense that $S_n$ acts on the ring $R_\lambda$ by permuting the variables $x_1,\dots, x_n$, and the isomorphism between the two rings is $S_n$ equivariant. A detailed analysis of the connection between this presentation $R_\lambda$ and modified Hall--Littlewood symmetric functions was given by Garsia and Procesi~\cite{Garsia-Procesi}. Springer fibers and their combinatorial and representation-theoretic properties served as motivation for Haiman's work connecting Macdonald polynomials, a generalization of Hall--Littlewood symmetric functions, with the geometry of Hilbert schemes~\cite{Haiman01,Haiman02}.

On another related line of research, the Delta Conjecture of Haglund, Remmel, and Wilson~\cite{HRW} predicts two combinatorial formulas for a particular symmetric function $\Delta'_{e_{k-1}} e_n(q, t)$ with $q$ and $t$ parameters coming from the theory of Macdonald polynomials. The ``rise formula'' half of the conjecture has been recently proven independently by D'Adderio and Mellit~\cite{DM}  and by Blasiak, Haiman, Morse, Pun, and Seelinger~\cite{BHMPS}, and the two works prove different generalizations of the rise formula. 

Since $\Delta'_{e_{k-1}} e_n$ is also conjectured to be Schur-positive, there is much interest in a natural algebraic or geometric construction of a (bigraded) $S_n$-module whose Frobenius characteristic is $\Delta'_{e_{k-1}} e_n$. Zabrocki has conjectured an algebraic formulation in the general case~\cite{Zabrocki}.

In the $t=0$ case of the conjecture, Haglund, Rhoades, and Shimozono~\cite{HRS1} found and proved an algebraic realization by constructing a graded ring $R_{n,k}$ with a suitable $S_n$-action whose graded Frobenius characteristic is $\Delta'_{e_{k-1}} e_n(q, 0)$ (after tensoring by the sign representation and reversing the grading).
A parallel geometric interpretation was given by Pawlowski and Rhoades~\cite{Pawlowski-Rhoades}, who discovered a complex algebraic variety whose cohomology ring is $R_{n,k}$. 
We note that the Hilbert--Poincar\'e series of $R_{n,k}$, which is determined by $\Delta'_{e_{k-1}}e_n(q,0)$, is not palindromic in $q$. Therefore, any such variety must be either non-compact or singular by Poincar\'e duality.
Pawlowski and Rhoades defined the non-compact smooth space of \textbf{spanning line arrangements}, $n$-tuples of lines in $\bC^k$ that span $\bC^k$,
\begin{align}
X_{n,k} \coloneqq \{(L_1,\dots, L_n) \in (\bP^{k-1})^n \st L_1+\cdots +L_n = \bC^k\}.
\end{align}
They proved that there is an isomorphism of graded rings and graded $S_n$-modules
\begin{align}
H^*(X_{n,k}) \cong R_{n,k},
\end{align}
thus giving a connection between the expression in the Delta Conjecture at $t=0$ and geometry.

\subsection{Results of this paper}

In this article, we introduce a compact and singular variety $Y_{n,(1^k),k}$ whose cohomology ring is the Haglund--Rhoades--Shimozono ring $R_{n,k}$. Thus, the variety $Y_{n,(1^k),k}$ gives a new geometric realization of $\Delta'_{e_{k-1}} e_n(q, 0)$. Furthermore, we situate $Y_{n,(1^k),k}$ as part of a broader family of subvarieties $Y_{n,\lambda,s}$ of the partial flag variety $\Fl_{(1^n)}(\bC^K)$, where $K=n+(s-1)(n-|\lambda|)$.
When $n=|\lambda|$, the $\Delta$-Springer variety $Y_{n,\lambda,s}$ is the Springer fiber $\cB^\lambda$ for all $s\geq \ell(\la)$.
Hence we call the varieties $Y_{n,\la,s}$ the \textbf{$\Delta$-Springer varieties}.  We then use techniques from the study of Springer fibers to study $Y_{n,\la,s}$. In particular, we show that $Y_{n,\la,s}$ is simultaneously a subvariety of a certain Steinberg variety and the projected image of a certain Spaltenstein variety. We use these facts to compute $H^*(Y_{n,\la,s})$. Our work situates the study of $R_{n,k}$ and $\Delta'_{e_{k-1}}e_n$ in the context of the theory of Springer fibers and geometric representation theory.

As our main result, we give and prove an explicit presentation of the ring $H^*(Y_{n,\la,s})$ as a quotient of a polynomial ring, generalizing Tanisaki's presentation for the cohomology ring of a Springer fiber~\cite{Tanisaki}. This presentation coincides with that of the graded ring $R_{n,\lambda,s}$ recently introduced by the first author~\cite{GriffinOSP}.

\begin{theorem}\label{thm:MainThmIntro}
We have $H^*(Y_{n,\la,s}) \cong R_{n,\la,s}$ as graded rings.
\end{theorem}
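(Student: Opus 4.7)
The plan is to mimic the De~Concini--Procesi--Tanisaki strategy for the classical Springer fiber, but with two geometric enhancements indicated in the introduction: realizing $Y_{n,\la,s}$ inside a Steinberg-type variety, and exhibiting it as the image of a projection from a larger Spaltenstein variety whose cohomology we can access via an honest Springer fiber. Since $Y_{n,\la,s}\subset \Fl_{(1^n)}(\bC^K)$, the tautological line bundles $\cL_1,\dots,\cL_n$ on the ambient partial flag variety pull back to line bundles on $Y_{n,\la,s}$, and setting $x_i := c_1(\cL_i)$ defines a graded ring homomorphism
\begin{equation*}
\varphi\colon \bQ[x_1,\dots,x_n] \longrightarrow H^*(Y_{n,\la,s}).
\end{equation*}
The first task is to check that $\varphi$ is surjective, which follows from an affine paving of $Y_{n,\la,s}$ indexed by combinatorial objects compatible with the polynomial generators (the analogous paving for classical Springer fibers is due to Spaltenstein and Shimomura, and the generalization should be carried out by pulling back the Schubert-cell paving of $\Fl_{(1^n)}(\bC^K)$ to $Y_{n,\la,s}$).

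Next I would show that $\varphi$ factors through the ideal defining $R_{n,\la,s}$, giving a surjection $\bar\varphi\colon R_{n,\la,s}\twoheadrightarrow H^*(Y_{n,\la,s})$. The generators of the Tanisaki-type ideal of $R_{n,\la,s}$ are certain elementary symmetric polynomials $e_d(x_{i_1},\dots,x_{i_r})$ in subsets of variables, together with the extra relations from \cite{GriffinOSP} that encode the parameters $n,\la,s$ (beyond the equal-size case). Geometrically, each such elementary symmetric polynomial is a Chern class of a subbundle or quotient bundle built from $\cL_{i_1},\dots,\cL_{i_r}$, and the nilpotent/rank conditions cutting out $Y_{n,\la,s}$ inside $\Fl_{(1^n)}(\bC^K)$ force these Chern classes to vanish in a prescribed range. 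The cleanest way to see this is to pass through the Spaltenstein variety: there is a proper birational (or at least cohomology-surjective) map $\pi\colon \widetilde Y_{n,\la,s}\to Y_{n,\la,s}$ where $\widetilde Y_{n,\la,s}$ fibers over a genuine Springer fiber, so the relations established by Tanisaki on the Springer fiber pull back to the desired relations on $Y_{n,\la,s}$ after accounting for the extra tautological bundles coming from the partial flags.

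To upgrade the surjection $\bar\varphi$ to an isomorphism, the plan is a dimension count in each graded degree. The Hilbert series of $R_{n,\la,s}$ is known explicitly from \cite{GriffinOSP} and equals a certain $(q$-analogue of a$)$ product of multinomials / Hall--Littlewood-type expression; on the geometric side, the affine paving used in the surjectivity step computes the Poincar\'e polynomial of $Y_{n,\la,s}$ as a sum over the corresponding combinatorial objects weighted by the dimension of each cell. I would match these two formulas term by term, which should reduce to a known identity for the generalized Hall--Littlewood expansion underlying $R_{n,\la,s}$, completing the proof.

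The main obstacle I anticipate is the middle step: verifying geometrically that the full set of defining relations of $R_{n,\la,s}$ vanishes in $H^*(Y_{n,\la,s})$. In the classical Springer case, the relations $e_d$ vanish because powers of a nilpotent operator have bounded rank, and each relation corresponds transparently to a rank condition on a tautological bundle. For $Y_{n,\la,s}$ the geometric meaning of the extra parameters $s$ and $n>|\la|$ introduces subtler rank/quotient conditions, and one must choose the Spaltenstein tower carefully so that Tanisaki's relations for the parent Springer fiber translate cleanly — without spurious relations or missing ones — into the presentation of $R_{n,\la,s}$. Once this geometric dictionary is in place, the dimension-matching step should be largely formal given the combinatorial results of \cite{GriffinOSP}.
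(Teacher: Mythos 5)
Your outline matches the paper's broad architecture: surjectivity of the Chern-class map via an affine paving, verification of the defining relations by passing to a Spaltenstein variety, and a Hilbert-series count to upgrade the surjection to an isomorphism. The first and third steps are essentially the paper's (Theorem~\ref{thm:Surj} together with the embedding into $Y_{n,\emptyset,s}$ for surjectivity; Theorem~\ref{thm:RankGenNLaS} for the Hilbert series). But your second step, the one you yourself flag as the main obstacle, is handled by a mechanism that does not work as stated.

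You propose a map $\pi\colon \widetilde Y_{n,\la,s}\to Y_{n,\la,s}$ that is ``proper birational (or at least cohomology-surjective),'' along which Tanisaki-type relations ``pull back'' to $Y_{n,\la,s}$. Neither hypothesis is the right one, and the directionality is backwards. The paper's $\pi$ is the projection from the Spaltenstein variety $\cB_\mu^\Lambda$ (with $\mu=(1^n,(s-1)^{n-k})$) onto the first $n$ steps of the flag. Its fibers over cells of $Y_{n,\la,s}$ are positive-dimensional Spaltenstein varieties $\cB^{\overline\Lambda}_{(s-1)^{n-k}}$ (Lemma~\ref{lem:FiberBundle}), so $\pi$ is genuinely not birational; and $Y_{n,\la,s}$ has many irreducible components (Theorem~\ref{thm:irreducible-components}), so a birational comparison would not make sense even in outline. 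More importantly, the logic you need is: given that a polynomial $e_d(S)$ in Chern classes vanishes in $H^*(\cB_\mu^\Lambda)$ (via Brundan--Ostrik's presentation, Lemma~\ref{lem:IdealContainment}), conclude by naturality that the corresponding class on $Y_{n,\la,s}$ is zero. For that you need $\pi^*\colon H^*(Y_{n,\la,s})\to H^*(\cB_\mu^\Lambda)$ to be \emph{injective}, not surjective. Injectivity of $\pi^*$ is the real technical heart of the argument, and it is not automatic: the paper proves it (Lemma~\ref{lem:InjCohomology}) via a ``Relative Affine Paving Lemma'' (Lemma~\ref{lem:InjectiveCoh}), which requires showing that $\pi^{-1}$ of each paving cell of $Y_{n,\la,s}$ splits as a product of that cell with a fixed compact Spaltenstein variety having its own paving, so that $\pi_*$ is surjective on homology and hence $\pi^*$ is injective on cohomology. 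Establishing this product structure in turn requires a careful unipotent change-of-basis (Lemma~\ref{newlem:TechnicalLemmaUnipotent}) specific to strongly Schubert-compatible fillings. Without this injectivity lemma, the relations verification does not go through, and your ``dimension-matching step'' cannot start because you would not yet have a well-defined surjection $R_{n,\la,s}\twoheadrightarrow H^*(Y_{n,\la,s})$.

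A smaller point: the paper does not need $\widetilde Y_{n,\la,s}$ to fiber over a Springer fiber; it applies Brundan--Ostrik's explicit presentation of $H^*(\cB_\mu^\Lambda)$ for the Spaltenstein variety directly, which is cleaner than routing through the full Springer fiber for $\Lambda$. Also, the relations $x_i^s$ are handled separately by the embedding $Y_{n,\la,s}\hookrightarrow Y_{n,\emptyset,s}$ and the iterated-projective-bundle computation of $H^*(Y_{n,\emptyset,s})$ (Lemma~\ref{lem:CohEmptyPartition}, Theorem~\ref{thm:Surj}); your outline folds these into the same step, which obscures where each family of relations comes from.
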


As a consequence, we see that the cohomology ring of $Y_{n,\lambda,s}$ has a graded $S_n$-module structure inherited from the action on $R_{n,\la,s}$ via permuting the variables $x_i$. This $S_n$-module structure on $H^*(Y_{n,\la,s})$ generalizes the action in the Springer fiber case, and it is compatible with an action on the cohomology of $H^*(\Fl_{(1^n)}(\bC^K))$ in the following sense. Let $k=|\lambda|$, and let $\alpha$ be the composition $\alpha = (1^n,(s-1)(n-k))$, where $1^n$ signifies that $\alpha$ begins with a string of $n$ many parts of size $1$. Denote by $S_\alpha = S_1\times \cdots \times S_1 \times S_{(s-1)(n-k)}$ the corresponding Young subgroup of $S_K$. We have that $S_n$ embeds into $S_K$ as the subgroup of permutations that fix $n+1,n+2,\dots, K$. Then $H^*(\Fl_{(1^n)}(\bC^K))$ has an $S_n$-module structure, and we have a commutative diagram of graded rings and graded $S_n$ modules,
\begin{equation}\label{eq:SnDiagram}
\begin{tikzcd}
\left(\dfrac{\bZ[x_1,\dots, x_K]}{\langle e_1,\dots, e_K\rangle}\right)^{S_{\alpha}} \arrow[r, "\cong"]\arrow[d,twoheadrightarrow] & H^*(\Fl_{(1^n)}(\bC^K))\arrow[d,twoheadrightarrow]\\
R_{n,\la,s} \arrow[r, "\cong"]& H^*(Y_{n,\la,s}),
\end{tikzcd}
\end{equation}
where the vertical maps are surjections, and the horizontal maps are isomorphisms. The superscript $S_\alpha$ in the diagram denotes the subring of (cosets of) polynomials invariant under the action of $S_\alpha$.

One of the known constructions of the Springer action, due to Borho and MacPherson, uses an action defined on perverse sheaves~\cite{Borho-MacPherson}.  This construction was adapted to Spaltenstein varieties by Brundan and Ostrik~\cite{Brundan-Ostrik},
and we believe it should be possible to give a direct construction of the $S_n$ action on $H^*(Y_{n,\la,s})$ along the same lines.

In order to prove Theorem~\ref{thm:MainThmIntro}, we construct an affine paving of the space $Y_{n,\la,s}$. As a corollary of the paving, we show that $Y_{n,\la,s}$ is equidimensional, and we compute its dimension. Furthermore, we give a characterization of the irreducible components of $Y_{n,\la,s}$ (Theorem~\ref{thm:irreducible-components}).
\begin{theorem}\label{thm:DimensionOfYIntro}
The variety $Y_{n,\lambda,s}$ is equidimensional of complex dimension
\begin{align}\label{eq:DimExpression}
    d = \sum_i \binom{\lambda_i'}{2} + (s-1)(n-k).
\end{align}
\end{theorem}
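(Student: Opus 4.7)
The approach is to produce an explicit affine paving of $Y_{n,\lambda,s}$, as the authors announce just before the theorem, and to read both the dimension and the equidimensionality off the top-dimensional cells. Because affine pavings are perfect, the dimension of $Y_{n,\lambda,s}$ equals the maximum cell dimension, and equidimensionality follows once one further shows that every maximal cell has the same dimension. The indexing set for the cells should generalize the classical Spaltenstein--Shimomura indexing of the Springer paving of $\mathcal{B}^\lambda$ by $\SYT(\lambda)$; in view of the identification $Y_{n,\lambda,s}=\mathcal{B}^\lambda$ when $n=|\lambda|$ and $s\geq\ell(\lambda)$ noted in the excerpt, the natural candidate is the family of $(n,\lambda,s)$-partial tableaux already used by the first author to describe $R_{n,\lambda,s}$.

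To produce the paving, I would first fix a nilpotent endomorphism $N$ of $\mathbb{C}^K$ of Jordan type determined by $\lambda$ and $s$, chosen so that membership $F_\bullet\in Y_{n,\lambda,s}$ becomes an explicit list of rank conditions on $N|_{F_i}$. Using an $N$-adapted basis, every flag in $\Fl_{(1^n)}(\mathbb{C}^K)$ has a unique reduced row-echelon representative, whose pivot pattern records a combinatorial object $T$. The key technical step is to analyze, for each such $T$, how the nilpotent rank conditions cut down the corresponding echelon cell: one writes out the free entries above the pivots and checks, pivot by pivot, that the conditions imposed by $Y_{n,\lambda,s}$ are either automatically satisfied (leaving the entry free) or force that entry to be a function of the others. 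The outcome should be that each nonempty intersection is an affine space whose dimension is a concrete statistic $d(T)$ generalizing the cocharge-type statistic appearing in the Springer case.

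Finally, I would show that $\max_T d(T) = \sum_i\binom{\lambda_i'}{2} + (s-1)(n-k)$ and that every $T$ achieving this maximum contributes a cell of that same dimension. The $\sum_i\binom{\lambda_i'}{2}$ summand should arise exactly as in Spaltenstein's analysis of $\mathcal{B}^\lambda$, while the extra $(s-1)(n-k)$ should come from the geometry of the $n-k$ ``excess'' basis vectors: each is free to occupy any of $s-1$ additional positions of the flag, contributing one free parameter per extra position. Equidimensionality then follows at once from perfectness of the paving. The main obstacle is the cell-by-cell analysis in the middle step: unlike in the Springer case, the ambient flag variety here has strictly larger rank than $|\lambda|$, and one has to keep careful track of which pivots are ``Springer-like'' (responsible for the $\binom{\lambda_i'}{2}$ terms) and which are ``excess'' (responsible for the $(s-1)(n-k)$ term), and verify that in every top cell the choices for the two types of pivots decouple so that the dimensions simply add.
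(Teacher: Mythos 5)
Your plan to build an affine paving by intersecting with the Schubert stratification, read off the dimension from the top cells, and interpret the two summands of \eqref{eq:DimExpression} as ``Springer-like'' and ``excess'' contributions is reasonable, and it is indeed close to how the paper establishes the \emph{dimension}. (The paper itself actually shortcuts that step: it proves $\Hilb(H^*(Y_{n,\la,s});q)=\Hilb(R_{n,\la,s};q)$ via a cell recursion, and then cites the known degree of the latter.) However, your proposed route to \emph{equidimensionality} has a real gap, and it is exactly the point where the paper has to do substantial extra work.

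The issue is the sentence asserting that ``equidimensionality follows once one further shows that every maximal cell has the same dimension.'' An affine paving gives you $\dim Y_{n,\la,s}=\max_T d(T)$, but it does \emph{not} by itself control the irreducible components. Each component of $Y_{n,\la,s}$ is the closure of some cell, but nothing in the paving structure forces every cell to lie in the closure of a top-dimensional cell; a priori there could be a small cell whose closure is a component of dimension strictly less than $d$. Determining which cells are ``maximal'' in the closure order is a genuinely hard combinatorial problem (it is unresolved in general even for ordinary Springer fibers, as the paper notes in Section~\ref{sec:FutureWork}), and you do not propose a way to attack it. The paper sidesteps this entirely: it partitions $Y_{n,\la,s}$ into locally closed pieces $Y_{n,\la,s}^S$ indexed by $\mathcal{P}(n,\la)$, defined by the Jordan type of the induced nilpotent on $F_k/(V_i\cap F_k)$. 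Lemma~\ref{lem:IrredSubspaceUnion} shows each Schubert cell of $Y_{n,\la,s}$ lies in exactly one $Y^S$, Lemma~\ref{lem:IrreducibleDimension} shows each nonempty $Y^S$ is \emph{smooth and connected} of dimension $n(\la)+(s-1)(n-k)$ (by induction using the combined isomorphism of Remark~\ref{rmk:CombinedIso}), and hence the closures $\overline{Y^S}$ are precisely the irreducible components, all of the same dimension. That smooth-connected-pieces argument is the missing ingredient in your proposal; without it, equidimensionality does not follow from the cell count.
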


We use the graded Frobenius characteristic formulas in~\cite{GriffinOSP} for $R_{n,\la,s}$ to prove a generalization of the  Springer correspondence to the setting of induced Specht modules. 
\begin{theorem}\label{thm:GenSpringerCorrespondence}
Let $d$ be as in \eqref{eq:DimExpression}, and consider $S_k$ as the subgroup of $S_n$ fixing ${k+1},\dots, n$. For $s>\ell(\la)$, we have an isomorphism of $S_n$-modules
\begin{align}
H^{2d}(Y_{n,\la,s};\bQ) \cong \mathrm{Ind}\!\uparrow_{S_k\times S_{n-k}}^{S_n} (S^\la),
\end{align}
where $S_k\times S_{n-k}$ acts on $S^\la$ by its usual action of $S_k$
(and $S_{n-k}$ acts trivially). For $s=\ell(\la)$, letting $\Lambda = (n-k+\la_1,\dots, n-k+\la_s)$, we have
\begin{align}
    H^{2d}(Y_{n,\la,s};\bQ) \cong S^{\Lambda/(n-k)^{s-1}},
\end{align}
the Specht module of skew shape $\Lambda/(n-k)^{s-1}$.
\end{theorem}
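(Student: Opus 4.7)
The plan is to combine Theorem~\ref{thm:MainThmIntro} and Theorem~\ref{thm:DimensionOfYIntro} with the explicit graded Frobenius characteristic formula for $R_{n,\la,s}$ proved in~\cite{GriffinOSP}. By Theorem~\ref{thm:MainThmIntro}, the $S_n$-module $H^{2d}(Y_{n,\la,s};\bQ)$ is isomorphic to the top graded component of $R_{n,\la,s}$, since the polynomial grading doubles to the cohomological grading, and by Theorem~\ref{thm:DimensionOfYIntro} this top nonzero degree is exactly $d$. Thus the theorem reduces to computing the coefficient of $q^d$ in $\Frob(R_{n,\la,s};q)$ and identifying the resulting symmetric function with the Frobenius image of the predicted $S_n$-module.

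On the representation-theoretic side, recall that the Frobenius characteristic of an induced Specht module has the clean expression
\begin{align*}
\Frob\bigl(\mathrm{Ind}\!\uparrow_{S_k}^{S_n}(S^\la)\bigr) = s_\la \cdot h_1^{n-k},
\end{align*}
using the identity $\Frob(\bQ[S_{n-k}]) = h_1^{n-k}$ together with the fact that induction from a Young subgroup corresponds to multiplication of Frobenius images. In parallel, the Frobenius characteristic of the skew Specht module $S^{\Lambda/(n-k)^{s-1}}$ is the skew Schur function $s_{\Lambda/(n-k)^{s-1}}$, which can be expanded via Jacobi--Trudi or Littlewood--Richardson.

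I would then match these to the coefficient of $q^d$ in the Hall--Littlewood-type formula for $\Frob(R_{n,\la,s};q)$ from~\cite{GriffinOSP}. For the case $s>\ell(\la)$, iterated Pieri should reduce the top-degree contribution to $s_\la \cdot h_1^{n-k}$, as the top-degree terms come from arbitrary sequences of appending $n-k$ boxes to $\la$ (with no constraint imposed by the column bound $s$). For the boundary case $s=\ell(\la)$, the number of rows available is precisely $\ell(\la)$, forcing the added cells into a column-compatible position inside $\Lambda$, producing exactly the skew shape $\Lambda/(n-k)^{s-1}$ and collapsing the top coefficient to $s_{\Lambda/(n-k)^{s-1}}$.

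The main obstacle is carrying out the two symmetric-function identities cleanly from the explicit formulas of~\cite{GriffinOSP}, particularly the more delicate boundary case where the induced Specht structure degenerates to a skew Specht module. A geometric alternative would be to work directly with the affine paving from Theorem~\ref{thm:DimensionOfYIntro}: the classes of top-dimensional cells span $H^{2d}(Y_{n,\la,s};\bQ)$, and the characterization of irreducible components (Theorem~\ref{thm:irreducible-components}) should index them by combinatorial tableaux on which the $S_n$-action admits a direct analysis in the style of the classical Springer correspondence.
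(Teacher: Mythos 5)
Your overall strategy is the same as the paper's: use Theorem~\ref{thm:MainThmIntro} (plus the equidimensionality statement of Theorem~\ref{thm:DimensionOfYIntro}) to identify $H^{2d}(Y_{n,\la,s};\bQ)$ with the top graded component of $R_{n,\la,s}^\bQ$, then compute that component's Frobenius characteristic via the formula from~\cite{GriffinOSP}. Your identification $\Frob\bigl(\mathrm{Ind}\!\uparrow_{S_k}^{S_n}(S^\la)\bigr) = s_\la\cdot h_1^{n-k}$ is also correct (by the tower property through $S_k\times S_{n-k}$ and $\Frob(\bQ[S_{n-k}])=h_1^{n-k}$).

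The gap is that you never actually carry out the extraction of the top coefficient, and your description of how it should go does not engage with the formula that is actually available. The graded Frobenius characteristic of $R_{n,\la,s}^\bQ$ proved in~\cite{GriffinOSP} (and restated as Theorem~\ref{thm:grFrobTheorem} in the paper) is a \emph{monomial} expansion over partial row-decreasing fillings weighted by an inversion statistic, not a Pieri or Hall--Littlewood expansion, so ``iterated Pieri should reduce the top-degree contribution'' has nothing to act on until you first rewrite that expansion in Schur form. For the $s=\ell(\la)$ case the required argument is genuinely delicate: one must determine exactly which PRD fillings achieve $\inv(\varphi)=d$, and the answer (cf.\ the paper's proof) is that the restriction to $[\la]$ must be column-decreasing (this uses a result from~\cite{HHL} on the maximum of the (I1)+(I2) inversions) and that every filled cell outside $[\la]$ must lie in the bottom row (because an (I4) inversion is incompatible with column-decreasingness in $[\la]$ and each cell outside $[\la]$ contributes at most $s-1$ inversions of types (I3)+(I4)); only then does the reversal $j\mapsto n+1-j$ yield a weight-preserving bijection with SSYT of shape $\Lambda/(n-k)^{s-1}$. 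Your phrase ``forcing the added cells into a column-compatible position'' gestures at this but does not supply it. For $s>\ell(\la)$ the paper in fact does not recompute anything from the PRD formula at all — it invokes the already-proved \cite[Corollary 3.3.15]{GriffinThesis} that identifies the top component of $R_{n,\la,s}^\bQ$ with $\mathrm{Ind}\!\uparrow_{S_k}^{S_n}(S^\la)$; if you want to re-derive that via symmetric functions you need a concrete argument of the same flavour as the $s=\ell(\la)$ case (identify the maximizing PRD fillings and biject with the relevant tableaux), not an appeal to Pieri.

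Your proposed ``geometric alternative'' is unlikely to work as stated: the $S_n$-action on $H^*(Y_{n,\la,s})$ is a Springer-type action that does not come from an action on the variety, so there is no direct $S_n$-action on cells or their fundamental classes to analyze, and the identification of top-dimensional cells with irreducible components gives a basis of $H^{2d}$ but does not by itself yield the module structure.
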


Finally, we generalize results of De Concini and Procesi~\cite{dCP}. Let $\mathfrak{gl}_n$ be the Lie algebra of $n\times n$ matrices over $\bQ$. Given $\la\vdash n$, define $O_\lambda\subseteq \mathfrak{gl}_n$ to be the set of $n\times n$ nilpotent matrices over $\bQ$ with Jordan type $\lambda$, and let $\overline{O}_\lambda$ be its closure in $\mathfrak{gl}_n$. Let $\mathfrak{t}\subset \mathfrak{gl}_n$ be the Cartan subalgebra of diagonal matrices. Since the only nilpotent diagonal matrix is the zero matrix, the scheme-theoretic intersection $\overline{O}_{\la}\cap \mathfrak{t}$ is a scheme supported at a single point. Denoting the coordinate ring of this scheme by $\bQ[\overline{O}_{\la}\cap \mathfrak{t}]$, De Concini and Procesi proved that
\begin{align}
    H^*(\cB^\la;\bQ) \cong \bQ[\overline{O}_{\la'}\cap \mathfrak{t}].
\end{align}

We prove the following generalization of De Concini and Procesi's result. Given a partition $\lambda$ of size at most $n$, let $\overline{O}_{n,\la}$ be the Eisenbud--Saltman rank variety (defined in Section~\ref{sec:IndVariety}).
\begin{theorem}\label{thm:ES-Iso}
Define the ind-variety $Y_{n,\la} \coloneqq \bigcup_{s\geq \ell(\la)} Y_{n,\la,s}$. There is an isomorphism of graded rings and graded $S_n$-modules
\begin{align}\label{eq:dCPGeneralization}
    H^*(Y_{n,\la};\bQ) \cong \bQ[\overline{O}_{n,\la'}\cap \mathfrak{t}].
\end{align}
\end{theorem}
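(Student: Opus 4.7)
The plan is to apply Theorem \ref{thm:MainThmIntro} together with an ind-variety stabilization argument, then adapt the strategy of De Concini and Procesi \cite{dCP} to match two presentations as quotients of $\bQ[x_1,\dots,x_n]$.

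First, I would establish the ind-variety structure by realizing each $Y_{n,\lambda,s} \hookrightarrow Y_{n,\lambda,s+1}$ as a closed embedding, naturally obtained by extending a flag in the smaller ambient space $\bC^K$ by a standard choice of subspace in the new coordinates. Using the affine pavings from the proof of Theorem \ref{thm:DimensionOfYIntro}, one can arrange these embeddings to be compatible with the pavings, so that the restriction maps on cohomology are surjective and $H^*(Y_{n,\lambda};\bQ) = \varprojlim_s H^*(Y_{n,\lambda,s};\bQ)$ as graded rings and $S_n$-modules.

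Second, Theorem \ref{thm:MainThmIntro} identifies each $H^*(Y_{n,\lambda,s};\bQ)$ with $R_{n,\lambda,s}$, equivariantly for $S_n$. I would verify that the cohomological restriction maps correspond under this identification to the natural graded surjections $R_{n,\lambda,s+1} \twoheadrightarrow R_{n,\lambda,s}$ determined by the presentations in \cite{GriffinOSP}. Writing $R_{n,\lambda,s} = \bQ[x_1,\dots,x_n]/I_{n,\lambda,s}$ with $I_{n,\lambda,s+1} \subseteq I_{n,\lambda,s}$, the inverse system stabilizes in each fixed graded degree (since each $\bQ[x_1,\dots,x_n]_d$ is finite dimensional), yielding $H^*(Y_{n,\lambda};\bQ) \cong \bQ[x_1,\dots,x_n]/I_{n,\lambda}$ with $I_{n,\lambda} := \bigcap_s I_{n,\lambda,s}$.

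Third, I would compute the ideal cutting out $\overline{O}_{n,\lambda'}\cap \mathfrak{t}$ in $\bQ[\mathfrak{t}] = \bQ[x_1,\dots,x_n]$ by restricting the defining equations of the Eisenbud--Saltman rank variety (minors of powers of a generic matrix encoding the partial rank conditions) to diagonal matrices. These restrictions evaluate to explicit symmetric polynomials in subsets of the $x_i$, formally analogous to the Tanisaki generators appearing in the ordinary Springer fiber case and in \cite{GriffinOSP}.

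The main obstacle I anticipate is matching $I_{n,\lambda}$ with the ideal of $\overline{O}_{n,\lambda'} \cap \mathfrak{t}$. My strategy is to exhibit one inclusion directly, checking that each family of generators on one side lies in the other ideal (following the approach of \cite{dCP}), and then use equality of Hilbert series to conclude the reverse inclusion: the ind-variety side is controlled by the graded Frobenius formulas for $R_{n,\lambda,s}$ from \cite{GriffinOSP} (passed to the limit), while the scheme side admits a direct computation from the rank-variety generators. The $S_n$-equivariance of the resulting isomorphism is then automatic, since both rings carry the standard permutation action on $x_1,\dots,x_n$, and the identification is compatible with those actions at each finite level.
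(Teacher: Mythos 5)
Your steps 1 and 2 — the ind-variety stabilization of the affine pavings, the inverse-limit identification $H^*(Y_{n,\lambda};\bQ) \cong \varprojlim_s H^*(Y_{n,\lambda,s};\bQ)$, and the degree-by-degree computation of $\varprojlim_s R_{n,\lambda,s}^\bQ = R_{n,\lambda}^\bQ$ — correspond exactly to the paper's proof of Theorem~\ref{thm:InfiniteIso}, so that part is sound.

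The gap is in step 3. The paper does \emph{not} re-derive the identification $R_{n,\lambda}^\bQ \cong \bQ[\overline{O}_{n,\lambda'}\cap\mathfrak{t}]$; it cites it from the first author's earlier work ([GriffinOSP, Corollary 6.4]) and simply composes with Theorem~\ref{thm:InfiniteIso}. You propose instead to obtain it by restricting to $\mathfrak{t}$ ``the defining equations of the Eisenbud--Saltman rank variety (minors of powers of a generic matrix).'' But as the paper itself emphasizes in Section~\ref{sec:IndVariety}, the defining ideal of $\overline{O}_{n,\lambda}$ is the \emph{radical} of the ideal generated by the $(r(d)+1)$-minors of $X^d$, and is in general strictly larger. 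So restricting minors to $\mathfrak{t}$ yields only a subideal $J'$ of the true ideal $J$ of the scheme-theoretic intersection; your ``one inclusion plus Hilbert series'' plan then faces two problems. First, you cannot compute $\Hilb(\bQ[\overline{O}_{n,\lambda'}\cap\mathfrak{t}];q)$ ``directly from the rank-variety generators,'' because those generators give only $J'\subseteq J$, hence only an upper bound on the Hilbert series of $\bQ[x]/J$. Second, proving the Hilbert series equality is essentially the whole content of the algebraic theorem: already in the classical case $n=k$ this is precisely Kraft's conjecture, which required the nontrivial filtration argument of De Concini--Procesi (and Tanisaki's simplification). So the difficult algebraic identification is acknowledged but not actually closed by your outline, whereas the paper invokes it as a prior result.

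A smaller imprecision: the embeddings $Y_{n,\lambda,s}\hookrightarrow Y_{n,\lambda,s+1}$ do not extend the flags (which have length $n$ in every $Y_{n,\lambda,s}$); rather the ambient space $\bC^K$ and the nilpotent operator grow, with the flag viewed inside a larger space. This does not affect the conclusion of step 1, but is worth phrasing correctly.
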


\subsection{Structure of the paper}

In Section~\ref{sec:Background}, we outline preliminary definitions and previous results. In Section~\ref{sec:AffinePaving}, we define the $\Delta$-Springer variety $Y_{n,\lambda,s}$ and prove that it has an affine paving. We prove that the cells of this paving have an inductive structure that allows us to compute the rank generating function of the cohomology ring.
In Section~\ref{sec:EmptyPartition}, we analyze the case $\lambda=\emptyset$ and prove that the variety $Y_{n,\emptyset,s}$ is an iterated projective bundle whose cohomology ring is the same as the cohomology ring of a product of projective spaces.
In Section~\ref{sec:SpaltensteinAndCohomology}, we show that $Y_{n,\la,s}$ is the image of a projection down from a Spaltenstein variety, and we use this to prove Theorem~\ref{thm:MainThmIntro}.
In Section~\ref{sec:IrreducibleComponents}, we prove Theorem~\ref{thm:DimensionOfYIntro} by characterizing the irreducible components of $Y_{n,\la,s}$, and we prove our generalization of the Springer correspondence, Theorem~\ref{thm:GenSpringerCorrespondence}.
In Section~\ref{sec:IndVariety}, we introduce $Y_{n,\la}$ and prove Theorem~\ref{thm:ES-Iso}.
Finally, in Section~\ref{sec:FutureWork} we list some open problems.

\section{Background}\label{sec:Background}

\subsection{Flag varieties and Schubert cells}\label{subsec:Flags}

Given a complex vector space $V=\bC^K$, a \textbf{partial flag} is a nested sequence of vector subspaces of $V$,
\begin{align}
    V_\bullet = (V_1\subset V_2\subset\dots\subset V_m).
\end{align}
Given a composition $\alpha = (\alpha_1,\dots, \alpha_m)$ with $\alpha_i>0$ for all $i$ and $\alpha_1+\cdots+\alpha_m \leq \dim(V)$,  define the \textbf{partial flag variety} to be the set of partial flags of $V$ such that the dimensions of the successive quotients $V_i/V_{i-1}$ are recorded by the parts of $\alpha$, so
\begin{align}
    \Fl_{\alpha}(V) \coloneqq \{V_\bullet = (V_1\subset\dots\subset V_m) \st  V_i\subset V,\, \dim(V_i/V_{i-1}) = \alpha_i\text{ for }i\leq m\}.
\end{align}
In the case when $K=n$ and $\alpha = (1^n)$, we recover the \textbf{complete flag variety}, denoted by $\Fl(n) = \Fl_{(1^n)}(\bC^n)$.  The (partial) flag variety is realized as a projective algebraic variety as $G/P_{\alpha}$, where $G=GL_K(\bC)$ and $P_\alpha$ is a parabolic subgroup determined by $\alpha$.

From here on, we focus on the case where $\alpha=(1^n)$ for some $n\leq K$.  Let $f_1,f_2,\dots,f_K$ be the standard ordered basis of $\bC^K$.  Given an injective map $w:[n]\rightarrow[K]$, let the {\bf coordinate flag} $F_\bullet^{(w)}$ be defined by setting $F^{(w)}_p=\vspan\{f_{w(1)},\ldots, f_{w(p)}\}$ for all $p$, $1\leq p\leq n$.  Note that $G=GL_K(\mathbb{C})$ acts on $\Fl_\alpha(\bC^K)$ via its action on $\bC^K$ and so does its subgroup $B\subseteq G$ of upper-triangular matrices.
Now define the {\bf Schubert cell} $C_w$ to be the $B$ orbit of $F^{(w)}_\bullet$ and the {\bf Schubert variety} $X_w=\overline{C_w}$ to be its closure.  The Schubert cells form an affine paving (in fact a cell decomposition as a CW-complex) of the partial flag variety.

There are several other descriptions of the Schubert cells and Schubert varieties that will be helpful.  Given any $V_\bullet\in C_w$, for each $p$, $1\leq p\leq K$, there exists a unique vector $v_p\in V_p\setminus V_{p-1}$ such that
\begin{equation} \label{eq:SchubCellCoords}
v_p=f_{w(p)}+\sum_{h=1}^{w(p)-1} \alpha_{h,p} f_h,
\end{equation}
where $\alpha_{h,p}=0$ if $h\in\{w(1),\ldots,w(p-1)\}$.  Note $V_p=\vspan\{v_1,\ldots,v_p\}$.  The $\alpha_{h,p}$ that are not required to be 0 can be taken as algebraically independent coordinates on $C_w$, considered as a locally closed subvariety of $\Fl_{(1^n)}(\bC^K)$.

Schubert cells and Schubert varieties can also be described in terms of intersection conditions with respect to the {\bf base flag} $F_\bullet \coloneqq F_\bullet^{(\iota)}$, where $\iota : [K]\to[K]$ is the identity map. To be precise, $F_p=\vspan\{f_1,\ldots,f_p\}$ for all $p$, $1\leq p\leq n$.  Given an injective map $w : [n]\to [K]$, we have the \textbf{Schubert cell}
\begin{align} \label{eqn:SchubCellDef}
 C_w \coloneqq \{V_\bullet \in \Fl_{(1^n)}(\bC^K) \st \dim(V_i\cap F_p) = \#\{j\leq i \st w(j)\leq p\}\}
\end{align}
and the \textbf{Schubert variety}
\begin{align}
    X_w \coloneqq \{V_\bullet \in \Fl_{(1^n)}(\bC^K) \st \dim(V_i\cap F_p) \geq \#\{j\leq i\st w(j)\leq p\}\}.
\end{align}

Since the Schubert cells $C_w$ form an affine paving, the Schubert classes $\{[X_w] \st w:[n]\to [K] \text{ injective}\}$ form a basis of $H^*(\Fl_{(1^n)}(\bC^K))$ (see Lemma \ref{lem:OddCohVanishes}).

\subsection{Chern classes} 

Given a complex vector bundle $E$ on a paracompact Hausdorff space $X$, the $i$-th Chern class of $E$ is a distinguished cohomology class $c_i(E)\in H^{2i}(X)$, with $c_0(E) = 1$ by definition. The Chern classes are invariants of the vector bundle, in the sense that if two vector bundles on $X$ are isomorphic, then their Chern classes agree.  

The sum of the Chern classes of a vector bundle $c(E) \coloneqq 1+c_1(E) + c_2(E) +\cdots$ is called the {\bf total Chern class} of $E$. It has the following useful properties.
\begin{itemize}
    \item Naturality: For any continuous map $f : X \to Y$ and any complex vector bundle $E$ on $Y$, we have $f^*(c(E)) = c(f^*(E))$, where the first $f^*$ is the map on cohomology and $f^*(E)$ is the pullback of $E$.
    \item Additivity: Given a short exact sequence of vector bundles $0\to E'\to E\to E''\to 0$ on $X$, we have
    \begin{align}
        c(E) = c(E')c(E''),
    \end{align}
    where multiplication is via the cup product on cohomology.
    \item Vanishing: If $r$ is the rank of $E$ as a complex vector bundle, then $c_i(E) = 0$ for all $i>r$.
    \item Triviality: If $E \cong \bC^r \times X$, a trivial vector bundle, then $c(E) = 1$.
\end{itemize}

In the case where $X = \Fl(n)$, for each $j$, there is the tautological vector bundle $\widetilde V_j$ whose fiber over $V_\bullet = (V_1,\dots, V_n)$ is the vector space $V_{j}$. Borel~\cite{Borel} proved that the classes $-c_1(\widetilde V_j/\widetilde V_{j-1})$ generate the cohomology ring $H^*(\Fl(n))$ as a graded algebra. Moreover, there is an isomorphism of graded algebras
\begin{align}\label{eq:BorelTheorem}
    H^*(\Fl(n)) \cong \frac{\bZ[x_1,\dots, x_n]}{\langle e_1(x_1,\dots, x_n),\dots, e_n(x_1,\dots, x_n)\rangle}
\end{align}
identifying $-c_1(\widetilde V_j/\widetilde V_{j-1})$ with $x_j$, where each variable $x_j$ is considered to have degree $2$.
The quotient ring on the right-hand side of \eqref{eq:BorelTheorem} is also known as the \textbf{coinvariant ring}.

\subsection{Affine paving}

Given a complex algebraic variety $X$, an {\bf affine paving} of $X$ is a sequence of closed subvarieties
\begin{align} \label{eq:affine-paving}
    X_0 \subseteq X_1\subseteq \cdots \subseteq X_m = X
\end{align}
of $X$ such that $X_i\setminus X_{i-1} \cong \bigsqcup_j A_{i,j}$ for some locally closed subspaces $A_{i,j}$, where for all $i,j$, $A_{i,j}\cong \bC^k$ for some $k$. The affine spaces $A_{i,j}$ are called the {\bf cells} of the affine paving.

When $X$ is compact, an affine paving gives us a convenient basis of the cohomology groups of $X$. The following two lemmata are standard (see for example~\cite{Hotta-Springer}), whereas the third is less standard but will be very useful for us.

\begin{lemma}\label{lem:OddCohVanishes}
Suppose $X$ is a compact complex algebraic variety that has an affine paving. If $X_i\setminus X_{i-1} = \bigsqcup_{i,j} A_{i,j}$ is the decomposition of $X$ into affine spaces, then we have the following isomorphisms of groups
\begin{align}
    H_{2k}(X) &\cong \bZ \cdot \big\{[\overline{A_{i,j}}]\big\}_{i,j}, \\
    \label{eq:even-cohomology}
    H^{2k}(X) &\cong H_{2k}(X)^* \cong \bZ^{\#\{(i,j) \st \dim_\bC(A_{i,j}) = k\}}, \\ \label{eq:odd-cohomology}
    H^{2k+1}(X) & = 0,
\end{align}
for all $k\geq 0$, where $[\overline{A_{i,j}}]$ is the fundamental homology class of the cell closure. If ``compact'' is replaced by ``smooth'', then the vanishing \eqref{eq:odd-cohomology} holds for compactly-supported cohomology, so $H^{2k+1}_c(X) = 0$ for all $k\geq 0$.
\end{lemma}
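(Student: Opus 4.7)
The plan is to induct on the length $m$ of the affine paving \eqref{eq:affine-paving}, using the long exact sequence in compactly supported cohomology associated to the closed-open decomposition $X_{i-1} \subseteq X_i \supseteq X_i \setminus X_{i-1}$. The key computation is that for complex affine space, $H^j_c(\bC^k) = \bZ$ if $j=2k$ and vanishes otherwise, so each stratum $X_i\setminus X_{i-1} = \bigsqcup_j A_{i,j}$ contributes compactly supported cohomology purely in even degrees, with rank in degree $2k$ equal to the number of cells of complex dimension $k$.

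For the inductive step, one applies the long exact sequence
\[\cdots \to H^j_c(X_i\setminus X_{i-1}) \to H^j_c(X_i) \to H^j_c(X_{i-1}) \to H^{j+1}_c(X_i\setminus X_{i-1}) \to \cdots.\]
By the inductive hypothesis and the affine computation, the two outer terms vanish in odd degrees, which forces $H^{2k+1}_c(X_i) = 0$ and a short exact sequence
\[0 \to H^{2k}_c(X_i\setminus X_{i-1}) \to H^{2k}_c(X_i) \to H^{2k}_c(X_{i-1}) \to 0\]
in each even degree. Since the right-hand group is free abelian, the sequence splits, and adding ranks yields \eqref{eq:even-cohomology} and \eqref{eq:odd-cohomology}. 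When $X$ is compact, $H^*_c(X) = H^*(X)$ gives the cohomology statement directly; when $X$ is merely smooth, the same argument applies verbatim to $H^*_c$, establishing the final assertion of the lemma.

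For the homology statement, once $H^*(X)$ is known to be free abelian and concentrated in even degrees, the universal coefficient theorem gives $H_{2k}(X) \cong H^{2k}(X)^*$ of the same rank. To identify the cell closures $\overline{A_{i,j}}$ as a basis, I would note that each $\overline{A_{i,j}}$ is a compact (possibly singular) complex algebraic variety of some dimension $d_{i,j}$, hence carries a fundamental class $[\overline{A_{i,j}}] \in H_{2d_{i,j}}(X)$. The filtration $X_0 \subset \cdots \subset X_m$ induces a filtration on $H_*(X)$ whose associated graded pieces should be generated by the fundamental classes of the cells introduced at each stage, matching the ranks computed above.

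The main technical point will be verifying that these fundamental classes really do span the associated graded, rather than merely matching ranks abstractly. Concretely, one needs to check that under the identification $H^{2d_{i,j}}(X_i, X_{i-1}) \cong \bigoplus_j H^{2d_{i,j}}_c(A_{i,j})$ coming from the pair sequence, the Poincar\'e--Lefschetz dual of $[\overline{A_{i,j}}]$ restricts to the canonical generator of $H^{2d_{i,j}}_c(\bC^{d_{i,j}})$ on its own cell. This is a standard compatibility, but requires some care in invoking Borel--Moore homology (or working with singular chains directly) since the $\overline{A_{i,j}}$ may fail to be smooth or irreducible.
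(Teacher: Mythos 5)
The paper gives no proof of this lemma at all; it simply labels it ``standard'' and cites Hotta--Springer, so there is nothing in the paper to compare your argument against line-by-line. Your argument is the standard one, and the inductive skeleton using the compactly-supported long exact sequence is correct: the vanishing $H^{j}_c(\bC^d)=0$ for $j\neq 2d$ forces the odd groups to die and the even ones to sit in split short exact sequences (split because the groups are free by induction), and $H^*_c = H^*$ for compact $X$ finishes \eqref{eq:even-cohomology}--\eqref{eq:odd-cohomology} and the smooth variant.

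The only place I would push back is on the framing of your ``main technical point.'' You propose to identify $H^{2d}(X_i,X_{i-1})\cong\bigoplus_j H^{2d}_c(A_{i,j})$ and then check that the ``Poincar\'e--Lefschetz dual of $[\overline{A_{i,j}}]$'' restricts to the canonical generator. But Poincar\'e--Lefschetz duality requires $X_i$ to be a manifold (or manifold with boundary), and the strata $X_i$ here are typically singular projective varieties, so there is no such dual in the naive sense. The clean route, which you allude to but do not commit to, is to run the induction in Borel--Moore homology directly: for the closed-open pair $X_{i-1}\subset X_i\supset U_i=X_i\setminus X_{i-1}$ one has
\[
\cdots\to H^{BM}_{j}(X_{i-1})\to H^{BM}_{j}(X_i)\xrightarrow{\;\mathrm{res}\;} H^{BM}_{j}(U_i)\to H^{BM}_{j-1}(X_{i-1})\to\cdots,
\]
and the restriction map sends the fundamental class $[\overline{A_{i,j}}]\in H^{BM}_{2d}(X_i)$ to the fundamental class $[A_{i,j}]\in H^{BM}_{2d}(A_{i,j})\cong\bZ$ (since $\overline{A_{i,j}}\cap U_i=A_{i,j}$). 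This shows at once that the restriction map is split surjective with the $[\overline{A_{i,j}}]$ giving a section, so the $[\overline{A_{i,j}}]$ are a basis; when $X$ is compact, $H^{BM}_*=H_*$, and the cohomology statement then follows by universal coefficients as you say. Also note that the cell closures $\overline{A_{i,j}}$ are automatically \emph{irreducible} (the closure of an irreducible locally closed set is irreducible), so the only genuine issue is possible singularity, and a compact irreducible complex variety of dimension $d$ always has $H_{2d}\cong\bZ$ generated by a fundamental class. With these adjustments your proof is complete and agrees with the standard argument the paper implicitly invokes.
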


Under certain conditions, affine pavings can also be used to prove that the map on cohomology induced by a continuous map is injective or surjective.
\begin{lemma}\label{lem:PavingSurj}
Suppose $X$ is a smooth compact complex algebraic variety and $Y\subseteq X$ is a closed subvariety of $X$. If $Y$ and $X\setminus Y$ have affine pavings, then the map on cohomology
\begin{align}
H^*(X)\to H^*(Y)
\end{align}
induced by the inclusion $Y\subseteq X$
is surjective.
\end{lemma}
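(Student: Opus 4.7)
The plan is to use the long exact sequence in cohomology associated to the closed inclusion $Y \hookrightarrow X$ with open complement $U \coloneqq X \setminus Y$. The first step is to write down the long exact sequence in compactly-supported cohomology
\begin{equation*}
\cdots \to H^k_c(U) \to H^k_c(X) \to H^k_c(Y) \to H^{k+1}_c(U) \to \cdots,
\end{equation*}
which exists in our setting because $Y$ is closed in the paracompact Hausdorff space $X$. Since $X$ is compact and $Y$ is closed in $X$ (hence compact), compactly-supported cohomology agrees with ordinary cohomology on both, and the middle map is identified with the pullback $H^k(X) \to H^k(Y)$ induced by inclusion.

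Next I would apply Lemma~\ref{lem:OddCohVanishes} to annihilate the odd-degree terms flanking $H^*(Y)$. The subvariety $Y$ is compact and admits an affine paving, so $H^{2k+1}(Y) = 0$ for every $k$. The complement $U$ is smooth, being an open subvariety of the smooth variety $X$, and carries an affine paving by hypothesis; hence the ``smooth'' variant of Lemma~\ref{lem:OddCohVanishes} gives $H^{2k+1}_c(U) = 0$ for every $k$.

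Surjectivity is then immediate. In odd degrees the target $H^{2k+1}(Y)$ already vanishes, so there is nothing to show. In even degrees the relevant segment of the long exact sequence reads
\begin{equation*}
H^{2k}(X) \to H^{2k}(Y) \to H^{2k+1}_c(U) = 0,
\end{equation*}
forcing the first arrow to be surjective. The only real obstacle is invoking the correct form of the long exact sequence for a closed/open pair in compactly-supported cohomology and checking that the pair $(X,Y)$ satisfies the mild hypotheses needed for it; once that is in place, the proof is a clean combination of exactness with the vanishing of odd-degree (compactly-supported) cohomology from the affine-paving hypothesis.
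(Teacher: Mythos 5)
Your proof is correct and takes essentially the same approach as the paper: both invoke the long exact sequence in compactly-supported cohomology for the open/closed pair $(X \setminus Y, Y)$ and conclude surjectivity from the odd-degree vanishing guaranteed by Lemma~\ref{lem:OddCohVanishes}. The only difference is cosmetic: the paper records the resulting short exact sequences $0 \to H^{2i}_c(X\setminus Y) \to H^{2i}(X) \to H^{2i}(Y)\to 0$, while you extract surjectivity directly from $H^{2k+1}_c(X\setminus Y) = 0$, which is the only vanishing actually needed.
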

\begin{proof}
By Lemma~\ref{lem:OddCohVanishes}, all odd cohomology groups of $X$ and $Y$ and all odd cohomology groups with compact support of $X\setminus Y$ are zero. By the long exact sequence for compactly supported cohomology associated to the diagram $Y\hookrightarrow X\hookleftarrow X\setminus Y$, we have short exact sequences
\begin{align}\label{eq:SESCoh}
0 \to H^{2i}_c(X\setminus Y) \to H^{2i}(X) \to H^{2i}(Y)\to 0
\end{align}
for all $i$.
The surjectivity of the map on cohomology then follows from \eqref{eq:SESCoh}.
\end{proof}

\begin{lemma}[Relative Affine Paving Lemma]\label{lem:InjectiveCoh}
Let $f: X\to Y$ be a surjective continuous map between compact complex algebraic varieties. Suppose that $Y$ has an affine paving such that for each cell $A_{i,j}$ of $Y$, we have an isomorphism
\begin{align} \label{eqn:f-isom}
    f^{-1}(A_{i,j}) \cong A_{i,j}\times Z_{i,j}
\end{align}
for some nonempty compact complex algebraic variety $Z_{i,j}$ with an affine paving.  Furthermore, suppose that the isomorphisms \eqref{eqn:f-isom} make the following diagram commute, where $\pi_1$ is the projection onto the first factor.
\begin{equation}\begin{tikzcd}
f^{-1}(A_{i,j}) \arrow[r,"\eqref{eqn:f-isom}"]\arrow[d,"f",swap] & A_{i,j} \times Z_{i,j}\arrow[dl,"\pi_1"]\\
A_{i,j} & 
\end{tikzcd}\end{equation}
Then the map on cohomology $H^*(Y)\to H^*(X)$ is injective.
\end{lemma}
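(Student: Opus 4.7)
The plan is to construct an affine paving of $X$ by pulling back the given paving of $Y$ through $f$, and then to run an inductive argument using the long exact sequences in compactly supported cohomology together with the five lemma. Write the paving of $Y$ as $Y_0 \subseteq Y_1 \subseteq \cdots \subseteq Y_m = Y$ with $Y_i \setminus Y_{i-1} = \bigsqcup_j A_{i,j}$, and set $X_i \coloneqq f^{-1}(Y_i)$. The hypothesis gives $X_i \setminus X_{i-1} = \bigsqcup_j (A_{i,j} \times Z_{i,j})$, and each factor $A_{i,j} \times Z_{i,j}$ inherits an affine paving from the paving of $Z_{i,j}$ (the product of two affine cells is again an affine cell). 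Concatenating these produces an affine paving of $X$; in particular each $X_i$ is a closed, compact subvariety of $X$, and Lemma~\ref{lem:OddCohVanishes} yields vanishing of the odd cohomology of each $X_i$ and $Y_i$, as well as the odd compactly supported cohomology of each $X_i \setminus X_{i-1}$ and $Y_i \setminus Y_{i-1}$.

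Consequently, the long exact sequences associated to the closed/open decompositions $Y_{i-1} \hookrightarrow Y_i \hookleftarrow Y_i \setminus Y_{i-1}$ and $X_{i-1} \hookrightarrow X_i \hookleftarrow X_i \setminus X_{i-1}$ break into short exact sequences in each even degree, and $f$ induces a map between them. I would induct on $i$, with base case $i=0$ (taking $X_{-1} = Y_{-1} = \emptyset$). Assuming $f^* \colon H^*(Y_{i-1}) \to H^*(X_{i-1})$ is injective, the five lemma reduces the inductive step to proving injectivity of the left vertical map
\[
H^*_c(Y_i \setminus Y_{i-1}) \longrightarrow H^*_c(X_i \setminus X_{i-1}),
\]
which by the commutative diagram in the hypothesis is a direct sum over $j$ of pullbacks $\pi_1^* \colon H^*_c(A_{i,j}) \to H^*_c(A_{i,j} \times Z_{i,j})$ along the first projection.

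The key step is the injectivity of each $\pi_1^*$. Since $Z_{i,j}$ is compact, $\pi_1$ is proper, so its pullback on compactly supported cohomology is defined. The K\"unneth formula for compactly supported cohomology, combined with the identification $H^*_c(Z_{i,j}) = H^*(Z_{i,j})$ (again by compactness), yields $H^*_c(A_{i,j} \times Z_{i,j}) \cong H^*_c(A_{i,j}) \otimes H^*(Z_{i,j})$, under which $\pi_1^*$ becomes the map $\alpha \mapsto \alpha \otimes 1$. Since $Z_{i,j}$ is nonempty, $1 \in H^0(Z_{i,j})$ is nonzero and this map is injective, completing the inductive step.

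I expect the main technical subtlety to be the careful handling of compactly supported cohomology: verifying that the long exact sequences and K\"unneth formula take the stated form in the presence of the (non-compact) open strata $Y_i \setminus Y_{i-1}$, and confirming that $\pi_1^*$ really is identified with the tensor-with-$1$ map. Once these foundational points are in place, the five-lemma induction is routine.
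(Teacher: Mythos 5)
Your argument is correct, but it takes a genuinely different route from the paper's. You run a diagram-chase induction on the filtration, using the long exact sequences in compactly supported cohomology to reduce to the injectivity of $\pi_1^* \colon H^*_c(A_{i,j}) \to H^*_c(A_{i,j} \times Z_{i,j})$, which you get from the K\"unneth formula (valid integrally here since $H^*_c(A_{i,j}) \cong H^*_c(\bC^d)$ is free, so the Tor terms vanish). The paper argues dually in homology and never touches the LES or K\"unneth: since $Z_{i,j}$ is compact it has a $0$-cell $c_{i,j}$, giving a cell $X_{i,j} \cong A_{i,j} \times \{c_{i,j}\}$ of $X$ on which $f$ restricts to an isomorphism onto $A_{i,j}$; from $\overline{f(\overline{X_{i,j}})} = \overline{A_{i,j}}$ one concludes $f_*\big([\overline{X_{i,j}}]\big) = [\overline{A_{i,j}}]$, so $f_*$ surjects onto the cell-closure basis of $H_*(Y)$, and $f^*$ is injective by the Universal Coefficient Theorem. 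The paper's version is shorter and avoids any bookkeeping about $H^*_c$ of a product of a cell with a possibly singular compact variety; yours follows the more standard long-exact-sequence/five-lemma template. One small imprecision to fix: your citation of Lemma~\ref{lem:OddCohVanishes} for the vanishing of odd $H^*_c(X_i \setminus X_{i-1})$ is not quite right as stated, since that lemma's compactly supported clause assumes smoothness and $Z_{i,j}$ (hence $A_{i,j} \times Z_{i,j}$) may be singular. The vanishing does hold---it drops out of the K\"unneth identification you already invoke, or from the observation that the usual long-exact-sequence argument for odd $H^*_c$-vanishing of an affinely paved variety never actually uses smoothness---but you should say which route you mean.
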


\begin{proof}
 Since $Y$ has an affine paving and $f^{-1}(A_{i,j})\cong A_{i,j} \times Z_{i,j}$, it can be seen that $X$ has an affine paving with cells $A_{i,j} \times C$, where $C$ runs over all cells of $Z_{i,j}$. 
 Therefore, $H_*(X)$ is freely generated by the fundamental classes of the cell closures, $[\overline{A_{i,j} \times C}]$.

Since $Z_{i,j}$ is compact, there is a cell of $Z_{i,j}$ consisting of a single point, $c_{i,j} = \{\text{pt}\}$, giving a cell $X_{i,j} \cong A_{i, j} \times c_{i,j}$ of $X$. Note that $\overline{X_{i,j}}$ may extend outside $f^{-1}(A_{i,j})$ and so may not be isomorphic to $\overline{A_{i,j}}$. However, since $\overline{f(\overline{C})} = \overline{f(C)}$ for all subspaces $C\subseteq X$ and continuous functions $f$, we have
\begin{equation}
\overline{f(\overline{X_{i,j}})} = \overline{f(X_{i,j})} = \overline{\pi_1(A_{i,j} \times c_{i,j})} = \overline{A_{i,j}}.
\end{equation}
Letting $f_* : H_*(X)\to H_*(Y)$ be the map on homology induced by $f$, we thus have
\begin{align}
f_*([\overline{X_{i,j}}]) = [\overline{A_{i,j}}],
\end{align}
so $f_*$ is surjective. 
By the Universal Coefficient Theorem, the map $f^*$ is the dual of $f_*$, so $f^*$ is injective.
\end{proof}

\subsection{Springer fibers}

Given a partition $\lambda$ of $n$, let $N_\lambda$ be a $n\times n$ nilpotent matrix whose Jordan block sizes are recorded by $\lambda$. We say that $N_\lambda$ has {\bf Jordan type} $\lambda$. The {\bf Springer fiber} associated to $\lambda$ is
\begin{align}
    \cB^\lambda \coloneqq \{V_\bullet \in \Fl(n) \st N_\lambda V_i\subseteq V_{i-1} \text{ for all } i\leq n\}.
\end{align}
Springer constructed an action of $S_n$ on $H^*(\cB^\lambda)$ that does not come from an action on $\cB^\lambda$. We note that in this article, the action on the cohomology ring we consider differs from the one originally constructed by Springer by tensoring with the sign representation.

A remarkable property of this action is that it gives a geometric construction of all the finite dimensional irreducible representations of $S_n$. Indeed, the dimension of $\cB^\lambda$ as a complex variety is 
\begin{align}
    n(\lambda) \coloneqq \sum_i \binom{\lambda_i'}{2},
\end{align}
and the top nonzero cohomology group of $\cB^\lambda$ as an $S_n$-module is
\begin{align}
    H^{2n(\lambda)}(\cB^\lambda;\bQ) \cong S^\lambda,
\end{align}
where $S^\lambda$ is the irreducible representation of $S_n$ usually associated to $\lambda$.
Therefore, in Lie type A there is a bijection, known as the \textbf{Springer correspondence}, between Springer fibers and the irreducible $S_n$-modules up to isomorphism. 

Hotta and Springer~\cite{Hotta-Springer} proved that the map on cohomology induced by the inclusion $\cB^\lambda \subseteq \Fl(n)$,
\begin{align}
    H^*(\Fl(n)) \to H^*(\cB^\lambda),
\end{align}
is surjective and $S_n$-equivariant. Hence, by surjectivity the cohomology ring $H^*(\cB^\lambda)$ is generated by the cohomology classes $-c_1(\widetilde V_i/\widetilde V_{i-1})$. Here, we are abusing notation and writing $\widetilde V_i$ for the restriction of this vector bundle to $\cB^\lambda$. 

There is an explicit presentation of $H^*(\cB^\lambda)$ as a quotient ring extending Borel's theorem~\cite{dCP,Tanisaki}.  Let $\la'$ denote the conjugate partition to $\la$, and let $\la_i'$ be the parts of $\la'$, where $\la_i'\coloneqq 0$ for $i>\la_1$. Given $S\subseteq \{x_1,\dots, x_n\}$, define $e_d(S)$ to be the sum of all square-free products of variables in $S$ of degree $d$. Define the following ideal and quotient ring
\begin{align}
    I_\lambda &\coloneqq \langle e_d(S) \st S\subseteq \{x_1,\dots,x_n\},\, d > |S| - \la_n' - \cdots - \la_{n-|S|+1}' \rangle,\\
    R_\lambda &\coloneqq \bZ[x_1,\dots, x_n]/I_\lambda.
\end{align}
Here and throughout the paper, we consider $R_\lambda$ to be a graded ring where each variable $x_j$ has degree $2$. It follows from work of Tanisaki~\cite{Tanisaki} that there is an isomorphism of graded rings and graded $S_n$-modules
\begin{align}
    H^*(\cB^\lambda) \cong R_\lambda
\end{align}
given by identifying the cohomology class $-c_1(\widetilde V_j/\widetilde V_{j-1})$ with the variable $x_j$.

For example, when $\lambda = (2,1)$, the ideal $I_\lambda$ is generated by $e_d(S)$ where $3\geq d>0$ and $|S|=3$, or $2\geq d>1$ and $|S| = 2$, so 
\begin{align}
I_{(2,1)} &= \big\langle x_1+x_2+x_3,\, x_1x_2+x_1x_3+x_2x_3,\, x_1x_2x_3,\, x_1x_2,\, x_1x_3,\, x_2x_3\big\rangle,
\end{align}
and $H^*(\cB^{(2,1)})\cong R_{(2,1)} = \bZ[x_1,x_2,x_3]/I_{(2,1)}$.

\subsection{Symmetric functions}

The representation theory of the group $S_n$ is closely related to the theory of symmetric functions.
A {\bf symmetric function} is a formal power series in the infinite variable set $\bx = \{x_1,x_2,\dots \}$ that is invariant under any permutation of the variables.
Given $\la$ an integer partition of $n$, which we write as $\la\vdash n$,  we denote by $\ell(\la)$ be the number of (nonzero) parts of $\la$. For each $\la \vdash n$, let $e_\la(\bx)$ and $s_\la(\bx)$ denote the \textbf{elementary symmetric function} and \textbf{Schur symmetric function} indexed by $\lambda$.  As $\lambda$ ranges over all partitions of all $n$, these form bases of the ring of symmetric functions. 

The Frobenius characteristic map, which we define next, gives a connection between symmetric functions and representations of $S_n$. Given $\la\vdash n$, let $S^\la$ be the irreducible $S_n$-module indexed by $\la$, also known as a \textbf{Specht module}. Suppose a finite-dimensional $S_n$ representation $V$ (over $\bQ$) decomposes as a direct sum of Specht modules
\begin{align}
V\cong \bigoplus_{\la\vdash n}(S^\la)^{c_\la}
\end{align}
for some nonnegative integers $c_\la$. The \textbf{Frobenius characteristic} of $V$ is then defined as the symmetric function
\begin{align}
\Frob(V) = \sum_{\la\vdash n} c_\la s_\la(\bx).
\end{align}
Given a graded $S_n$-module $V = \bigoplus_{i \geq 0} V_i$ with finite-dimensional direct summands $V_i$, the \textbf{graded Frobenius characteristic} of $V$ is
\begin{align}
\Frob(V;q) = \sum_{i\geq 0} \Frob(V_i)q^i.
\end{align}
We refer the reader to~\cite{Sagan} for more details.

\subsection{The rings \texorpdfstring{$R_{n,\lambda}$}{Rn,lambda} and \texorpdfstring{$R_{n,\lambda,s}$}{Rn,lambda,s}}

We recall the definitions and properties of the rings $R_{n,\la}$ and $R_{n,\la,s}$ introduced by the first author in \cite{GriffinOSP}. These rings simultaneously generalize the cohomology ring of a Springer fiber $H^*(\cB^\lambda)$ and the Haglund--Rhoades--Shimozono ring
\begin{align}
    R_{n,k} = \frac{\bZ[x_1,\dots, x_n]}{\langle x_1^k,\dots, x_n^k,e_n,e_{n-1},\dots, e_{n-k+1}\rangle}.
\end{align}

\begin{definition}\label{def:RnLaDef}
Fix nonnegative integers $k\leq n$, a partition $\la\vdash k$, and a positive integer $s\geq \ell(\la)$. 
 The ideals $I_{n,\la}$ and $I_{n,\la,s}$ are defined by
 \begin{align}
     I_{n,\la} &= \langle e_d(S) \st S\subseteq \{x_1,\dots, x_n\}, \, d > |S| - \la_n' - \cdots - \la_{n-|S|+1}'\rangle,\\
     I_{n,\la,s} &= I_{n,\la} +  \langle x_1^s,\dots, x_n^s\rangle.
 \end{align}
 The rings $R_{n,\la}$ and $R_{n,\la,s}$ are the corresponding quotients
 \begin{align}
 R_{n,\la} &= \bZ[x_1,\dots, x_n]/I_{n,\la},\\
 R_{n,\la,s} &= \bZ[x_1,\dots, x_n]/I_{n,\la,s}.
 \end{align}
 \end{definition}
 The rings $R_{n,\la}$ and $R_{n,\la,s}$ are evidently graded by degree and carry an action of $S_n$ by permuting the variables. In~\cite{GriffinOSP}, it is shown that $R_{n,\la}$ in general has infinitely many nonzero graded components, whereas $R_{n,\la,s}$ always has finite rank.

The following two specializations of $R_{n,\la,s}$ will be particularly important to us.
\begin{itemize}
    \item When $n=k$, $R_{n,\la,s}$ specializes to the cohomology ring of a Springer fiber. Precisely, $I_{n,\la,s} = I_{\lambda}$ for any $s\geq \ell(\la)$; thus $R_{n,\la,s} = R_\la$ in this case.
    \item When $\la = (1^k)$ and $s=k$, $R_{n,\la,s}$ specializes to the Haglund--Rhoades--Shimozono ring. Indeed, we have $I_{n,(1^k),k} = I_{n,k}$; thus $R_{n,(1^k),k} = R_{n,k}$.
\end{itemize}

\begin{example}
Let $n=4$, $\lambda = (2,1)$, and $s=2$. Then the ideal $I_{4,(2,1)}$ is generated by the polynomials 
$e_d(S)$ for $S\subseteq \{x_1,\dots, x_4\}$ such that
\begin{equation*}
\begin{aligned}
	d &= 2 \text{ and }|S| = 4,\\
	d &= 4\text{ and }|S| = 4,\\
\end{aligned}
\qquad
\begin{aligned}
	d &= 3\text{ and }|S| = 4,\\
	d &= 3\text{ and }|S| = 3.
\end{aligned}
\end{equation*}
We have
\begin{align*}
    I_{4,(2,1)} 
    &= \big\langle x_1x_2+x_1x_3+x_1x_4+x_2x_3+x_2x_4+x_3x_4,\\
    &\qquad x_1x_2x_3+x_1x_2x_4+x_1x_3x_4+x_2x_3x_4,\\ 
    &\qquad x_1x_2x_3x_4,\, x_1x_2x_3,\, x_1x_2x_4,\, x_1x_3x_4,\, x_2x_3x_4\big\rangle,
\end{align*}
and $I_{4,(2,1),2} = I_{4,(2,1)} + \langle x_1^2,x_2^2,x_3^2,x_4^2\rangle$. Finally, $R_{4,(2,1)} = \bZ[x_1,x_2,x_3,x_4]/I_{4,(2,1)}$ and $R_{4,(2,1),2} = \bZ[x_1,x_2,x_3,x_4]/I_{4,(2,1),2}$.
\end{example}

Let $I_{n,\lambda,s}^\bQ$ be the ideal in $\bQ[x_1,\dots,x_n]$ given by the same generators as $I_{n,\lambda,s}$, and let $R_{n,\la,s}^\bQ = \bQ[x_1,\dots, x_n]/I_{n,\lambda,s}^\bQ$. 
There is a convenient basis of $R_{n,\la,s}^\bQ$ generalizing the \textbf{Artin basis} of the coinvariant ring, defined next. Set $\cA_{0,\emptyset,s} \coloneqq \{1\}$. Let the set $\cA_{n,\la,s}$ be defined recursively for $n\geq 1$ by
\begin{align}
    \cA_{n,\la,s} \coloneqq \bigsqcup_{i=1}^{\ell(\la)} x_n^{i-1} \cA_{n-1,\la^{(i)},s} \sqcup \bigsqcup_{i=\ell(\la)+1}^s x_n^{i-1} \cA_{n-1,\la,s},
\end{align}
where for $1\leq i \leq \ell(\la)$, $\la^{(i)}$ is the partition obtained by sorting the parts of \[(\la_1,\dots, \la_{i-1},\la_i-1,\la_{i+1},\dots, \la_{\ell(\la)})\]
and deleting a trailing zero if necessary.
It is shown in~\cite[Theorem 3.18]{GriffinOSP} that $\cA_{n,\la,s}$ is a $\bQ$-basis of $R_{n,\la,s}^\bQ$. In fact, it is straightforward to show that $\cA_{n,\la,s}$ is actually a $\mathbb{Z}$-basis of $R_{n,\la,s}$.

\begin{lemma}\label{lem:FreeZMod}
The ring $R_{n,\la,s}$ is a free $\bZ$-module, and the set $\cA_{n,\la,s}$ is a $\bZ$-basis of $R_{n,\la,s}$.
\end{lemma}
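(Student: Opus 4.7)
The plan is to show that $\cA_{n,\la,s}$ both spans $R_{n,\la,s}$ as a $\bZ$-module and is $\bZ$-linearly independent, deducing linear independence from the $\bQ$-basis result in \cite[Theorem 3.18]{GriffinOSP}.

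First I would establish that $\cA_{n,\la,s}$ spans $R_{n,\la,s}$ over $\bZ$. The strategy is to verify that the straightening procedure used in \cite{GriffinOSP} to reduce an arbitrary monomial modulo $I_{n,\la,s}^\bQ$ to an element of the $\bQ$-span of $\cA_{n,\la,s}$ in fact only introduces integer coefficients. The key observation is that each ideal generator $e_d(S)$ or $x_i^s$ is monic in a suitable monomial order, so reducing a monomial by any such generator amounts to subtracting an integer multiple and preserves integrality. The recursive splitting by the exponent of $x_n$ (which either lies in $[0,\ell(\la)-1]$ and is handled using $\la^{(i)}$ or lies in $[\ell(\la), s-1]$ and is handled using $\la$) allows an induction on $n$ that matches the recursion in the definition of $\cA_{n,\la,s}$.

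Given the integral spanning statement, I would then consider the surjection $\phi: \bZ^{|\cA_{n,\la,s}|}\twoheadrightarrow R_{n,\la,s}$ sending standard basis vectors to elements of $\cA_{n,\la,s}$. Tensoring with $\bQ$ yields a map $\phi\otimes \bQ: \bQ^{|\cA_{n,\la,s}|}\to R_{n,\la,s}^\bQ$, which is an isomorphism by \cite[Theorem 3.18]{GriffinOSP}. The kernel $K=\ker\phi$ is a subgroup of $\bZ^{|\cA_{n,\la,s}|}$, hence is free abelian, and has rank zero since $K\otimes \bQ = \ker(\phi\otimes \bQ)=0$. Thus $K=0$, so $\phi$ is an isomorphism and $R_{n,\la,s}$ is $\bZ$-free with basis $\cA_{n,\la,s}$.

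The main obstacle is the first step: verifying that the argument of \cite{GriffinOSP} remains integral throughout. The monic leading coefficients of the generators make this plausible, but it requires careful bookkeeping to confirm that the inductive reduction can be carried out entirely in $\bZ[x_1,\ldots,x_n]$, matching the decomposition $\cA_{n,\la,s}=\bigsqcup_i x_n^{i-1}\cA_{n-1,\la^{(i)},s} \sqcup \bigsqcup_i x_n^{i-1}\cA_{n-1,\la,s}$ without ever needing to divide.
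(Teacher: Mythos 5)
Your proof is correct and follows essentially the same approach as the paper: both establish $\bZ$-spanning by observing that the reduction argument of \cite{GriffinOSP} (specifically, the proof of Lemma 3.14 there) remains integral because the ideal generators $e_d(S)$ and $x_i^s$ have unit leading coefficients, and both deduce $\bZ$-linear independence from the $\bQ$-basis statement \cite[Theorem 3.18]{GriffinOSP}. Your linear-independence step is phrased via flatness of $\bQ$ over $\bZ$ and the rank of $\ker\phi$, whereas the paper simply notes directly that a nontrivial $\bZ$-relation would give a nontrivial $\bQ$-relation; these are the same argument in slightly different clothing.
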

\begin{proof}
The proof of Lemma 3.14 in \cite{GriffinOSP} also proves that $\cA_{n,\la,s}$ is a $\bZ$-spanning set of $R_{n,\la,s}$. Since $\cA_{n,\la,s}$ is a $\bQ$-linearly independent subset of $R_{n,\la,s}^\bQ$, it is also a $\bZ$-linearly independent subset of $R_{n,\la,s}$, and $R_{n,\la,s}$ is free as a $\bZ$-module.
\end{proof}

Given $V = \bigoplus_{i\geq 0} V_i$ a graded free $\bZ$-module with graded pieces $V_i$ of finite rank $\mathrm{rk}(V_i)$, let the \textbf{Hilbert--Poincar\'e series} of the module $V$ be 
\begin{align}
    \Hilb(V;q) \coloneqq \sum_{i\geq 0} \mathrm{rk}(V_i)q^i.
\end{align}
Under our convention that $x_i$ has degree $2$ for all $i$, we have the following recursive formula for the Hilbert series, which follows immediately by Lemma~\ref{lem:FreeZMod}.
\begin{lemma}\label{lem:RHilbRecursion}
We have
\begin{align}
    \Hilb(R_{n,\la,s};q) = \sum_{i=1}^{\ell(\la)} q^{2(i-1)} \Hilb(R_{n-1,\la^{(i)},s};q) + \sum_{i=\ell(\la)+1}^s q^{2(i-1)} \Hilb(R_{n-1,\la,s};q).
\end{align}
\end{lemma}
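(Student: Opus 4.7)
The plan is to read the recursive formula for the Hilbert--Poincaré series directly off the recursive description of the basis $\cA_{n,\la,s}$ guaranteed by Lemma~\ref{lem:FreeZMod}. Since $R_{n,\la,s}$ is a free $\bZ$-module with basis $\cA_{n,\la,s}$, its graded rank in degree $d$ equals the number of elements of $\cA_{n,\la,s}$ of degree $d$, and hence
\begin{align}
\Hilb(R_{n,\la,s};q) = \sum_{f \in \cA_{n,\la,s}} q^{\deg f}.
\end{align}

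Next I would substitute the recursive disjoint-union definition
\begin{align}
\cA_{n,\la,s} = \bigsqcup_{i=1}^{\ell(\la)} x_n^{i-1}\, \cA_{n-1,\la^{(i)},s}\ \sqcup\ \bigsqcup_{i=\ell(\la)+1}^s x_n^{i-1}\, \cA_{n-1,\la,s}
\end{align}
into this sum. The key observation is that under our grading convention each variable $x_j$ has degree~$2$, so multiplying a monomial $f \in \cA_{n-1,\la^{(i)},s}$ (respectively $\cA_{n-1,\la,s}$) by $x_n^{i-1}$ produces a monomial of degree $\deg f + 2(i-1)$. Moreover, multiplication by $x_n^{i-1}$ is a bijection from $\cA_{n-1,\la^{(i)},s}$ (or $\cA_{n-1,\la,s}$) onto its image in $\cA_{n,\la,s}$, because the images are disjoint in the decomposition.

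Therefore the contribution of the $i$-th block to the generating function is $q^{2(i-1)}$ times the generating function of the smaller basis, giving
\begin{align}
\Hilb(R_{n,\la,s};q) = \sum_{i=1}^{\ell(\la)} q^{2(i-1)} \Hilb(R_{n-1,\la^{(i)},s};q) + \sum_{i=\ell(\la)+1}^s q^{2(i-1)} \Hilb(R_{n-1,\la,s};q),
\end{align}
as claimed. There is no real obstacle here: the proof is a direct bookkeeping argument, and the only subtlety worth flagging is the degree-doubling convention, which ensures the exponent on $q$ is $2(i-1)$ rather than $i-1$.
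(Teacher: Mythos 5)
Your proof is correct and matches the paper's approach: the paper states that the recursion ``follows immediately by Lemma~\ref{lem:FreeZMod}'', and your argument just spells out the bookkeeping (the disjoint-union decomposition of $\cA_{n,\la,s}$ and the degree shift by $2(i-1)$) that makes that immediate implication explicit.
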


Since the set of generators of the homogeneous ideal $I_{n,\la,s}$ is closed under the action of $S_n$ permuting the variables, $R_{n,\la,s}$ inherits the structure of a graded $S_n$-module. In order to prove our generalization of the Springer correspondence, we make use of a formula for the graded Frobenius characteristic of $R_{n,\la,s}^\bQ$ proven in~\cite{GriffinOSP}. We state the formula and define the associated combinatorial objects in Section~\ref{sec:IrreducibleComponents} where we need it.

\section{An affine paving of the \texorpdfstring{$\Delta$}{Delta}-Springer variety}\label{sec:AffinePaving}

In this section, we define a family of varieties $Y_{n,\lambda,s}$ that generalize the Springer fibers, which we call $\Delta$-Springer varieties. We construct an affine paving of $Y_{n,\lambda,s}$ by intersecting it with Schubert cells.
We show that this affine paving has an inductive structure that allows us to show $H^*(Y_{n,\lambda,s})$ and $R_{n,\lambda,s}$ have the same Hilbert--Poincar\'e series.

Our approach most closely resembles that of Tymoczko~\cite{Tymoczko-LinearConditions} for type A regular Hessenberg varieties, which was extended to arbitrary type by Precup~\cite{Precup-AffinePavings}. Our work is also inspired by  Spaltenstein~\cite[Chapitre II, Proposition 5.9]{Spaltenstein-book}, who gave an affine paving for the case of Springer fibers.  Similar constructions of affine pavings have also been given by Shimomura~\cite{Shimomura} and Fresse~\cite{Fresse}.  

Let $0\leq k\leq n$ be integers, let $\la\vdash k$, and let $s$ be a positive integer such that $\ell(\lambda) \leq s$. Define $\Lambda \coloneqq \Lambda(n,\lambda,s) \coloneqq (n-k+\lambda_1,\ldots,n-k+\lambda_s)$ a partition (where $\la_i=0$ for all $i>\ell(\la)$) of $K\coloneqq |\Lambda|=s(n-k)+k$. We define the $\Delta$-Springer variety, which is our main object of study.
\begin{definition}
Let $N_\Lambda$ be a nilpotent matrix of Jordan type $\Lambda$. Define the \textbf{$\Delta$-Springer variety}
\begin{align}
    Y_{n,\la,s}\coloneqq \{V_\bullet \in \Fl_{(1^n)}(\bC^K)\st N_\Lambda V_i\subseteq V_{i-1}\text{ for }i\leq n,\text{ and }N_\Lambda^{n-k}\bC^K\subseteq V_n\},
\end{align}
where $V_0\coloneqq 0$.
\end{definition}

\begin{remark}
Since $N_\Lambda$ is nilpotent, it can be checked that the set of partial flags $V_\bullet\in \Fl_{(1^n)}(\bC^K)$ satisfying the conditions $N_\Lambda V_i\subseteq V_{i-1}$ for $i\leq n$ is the same as the set of flags satisfying the conditions $N_\Lambda V_i\subseteq V_{i}$ for $i\leq n$. Therefore, the variety $Y_{n,\la,s}$ can alternatively be defined as the reduced scheme of points satisfying
\begin{align}
    Y_{n,\la,s} = \{V_\bullet \in \Fl_{(1^n)}(\bC^K) \st N_\Lambda V_i\subseteq V_{i} \text{ for }i\leq n,\text{ and }N_\Lambda^{n-k}\bC^K\subseteq V_n\}.
\end{align}
\end{remark}

\begin{remark}[Base case $n=0$]\label{rmk:BaseCase}
For clarity in induction proofs below, we describe the case $n=0$. In this case, we must have $k=0$, $\lambda = \emptyset$, and $s>0$ arbitrary. Therefore, the ideal $I_{0,\emptyset,s}$ has no generators, so $I_{0,\emptyset,s}=\langle 0\rangle$, and since $n=0$, the ring $R_{0,\emptyset,s}$ is the quotient of the $\bZ$-algebra with no generators, namely $R_{0,\emptyset,s} = \bZ/\langle 0\rangle = \bZ$. For the $\Delta$-Springer variety, we have that $\Fl(0)$ is a single point, which represents the trivial flag, and the $\Delta$-Springer variety is also a single point. Thus, $H^*(Y_{0,\emptyset,s}) = \bZ = R_{0,\emptyset,s}$.
\end{remark}

\begin{lemma}
\label{lem:ConjInd}
  The structure of $Y_{n,\la,s}$ as an algebraic variety is independent of the choice of $N_\Lambda$.
\end{lemma}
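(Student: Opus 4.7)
The plan is to exhibit an explicit algebraic isomorphism between the varieties defined by two different choices of nilpotent matrix of Jordan type $\Lambda$. The key input is the standard fact from linear algebra that any two nilpotent $K \times K$ matrices of the same Jordan type are conjugate in $GL_K(\bC)$: given $N_\Lambda$ and $N_\Lambda'$ both of Jordan type $\Lambda$, pick $g \in GL_K(\bC)$ with $N_\Lambda' = g N_\Lambda g^{-1}$.

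Next I would use the natural action of $GL_K(\bC)$ on $\Fl_{(1^n)}(\bC^K)$ sending $V_\bullet = (V_1 \subset \cdots \subset V_n)$ to $g \cdot V_\bullet \coloneqq (gV_1 \subset \cdots \subset gV_n)$. This is an algebraic automorphism of $\Fl_{(1^n)}(\bC^K)$ with inverse given by $g^{-1}$. I claim it restricts to an isomorphism between $Y_{n,\la,s}$ defined using $N_\Lambda$ and $Y_{n,\la,s}$ defined using $N_\Lambda'$. For the first defining condition, $N_\Lambda V_i \subseteq V_{i-1}$ implies $g N_\Lambda V_i \subseteq g V_{i-1}$, which, after writing $g N_\Lambda = N_\Lambda' g$, becomes $N_\Lambda'(gV_i) \subseteq gV_{i-1}$. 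For the second condition, since $g$ is invertible we have $g\bC^K = \bC^K$, so $N_\Lambda^{n-k}\bC^K \subseteq V_n$ gives $(N_\Lambda')^{n-k}\bC^K = g N_\Lambda^{n-k} g^{-1}\bC^K = g N_\Lambda^{n-k}\bC^K \subseteq g V_n$. Thus the two varieties are isomorphic as closed subvarieties of $\Fl_{(1^n)}(\bC^K)$.

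There is no real obstacle here; the lemma is a standard consequence of conjugacy of nilpotents of fixed Jordan type, and the whole argument is one short paragraph once the $GL_K$-action is introduced. The only minor point worth flagging is that one should verify the isomorphism is algebraic (not merely a bijection), which is automatic since left multiplication by $g$ is a regular map on $\bC^K$ and hence induces a regular automorphism on the flag variety; this justifies writing $Y_{n,\la,s}$ without reference to the particular choice of $N_\Lambda$ in the sequel.
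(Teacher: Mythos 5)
Your proposal is correct and matches the paper's own proof essentially verbatim: both conjugate $N_\Lambda$ to $N_\Lambda'$ via some $g\in\GL_K(\bC)$ and observe that the resulting $\GL_K(\bC)$-action on $\Fl_{(1^n)}(\bC^K)$ carries one $\Delta$-Springer variety onto the other. The paper states this more tersely; you have simply spelled out the verification of the two defining conditions.
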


\begin{proof}
Let $N_\Lambda$ and $N_\Lambda'$ be two nilpotent matrices of Jordan type $\Lambda$, and let $Y_{n,\la, s}$ and $Y_{n, \la,s}'$ be the corresponding $\Delta$-Springer varieties. Since $N_\Lambda$ and $N_{\Lambda}'$ have the same Jordan type, there exists $g\in \GL_K(\bC)$ such that $N_\Lambda' = gN_\Lambda g^{-1}$. It follows that $g Y_{n,\lambda, s} = Y_{n,\lambda, s}'$ under the action of $\GL_K(\bC)$ on $\Fl_{(1^n)}(\bC^K)$.  In particular, $Y_{n, \la, s} \cong Y_{n, \la, s}'$.
\end{proof}

We denote by $[\Lambda]$ the Young diagram of $\Lambda$, following the English convention, considered as the set
\begin{equation}
[\Lambda]=\{(i,j)\mid 1\leq i\leq \ell(\Lambda), 1\leq j\leq \Lambda_i\},
\end{equation}
where $(i,j)$ is the cell in the $i$-th row from the top and the $j$-th column from the left.
The cells in column $n-k+1$ and to the right form a copy of the Young diagram of $\lambda$, which we denote by $[\lambda]$. 
We say that a cell $(i,j)$ of $[\Lambda]$ is on the \textbf{right edge} if $j=\Lambda_i$.
See Figure~\ref{fig:Lambda} for an illustration of $[\Lambda]$ and $[\lambda]$, where the cells in the right edge of $[\Lambda]$ are shaded.

\begin{figure}
    \centering
    \includegraphics[scale=0.45]{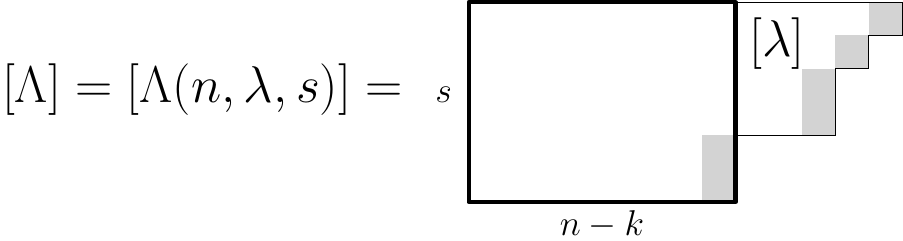}
    \caption{The Young diagram $[\Lambda]$, which has a copy of $[\lambda]$ in the upper right corner, highlighted in bold. The cells in the right edge of $[\Lambda]$ are shaded.
    }
    \label{fig:Lambda}
\end{figure}

It will be convenient to specify particular choices for $N_\Lambda$, which we do next. For any filling $T:[\Lambda]\rightarrow \mathbb{Z}_{>0}$ of $[\Lambda]$ satisfying the following conditions,
\begin{enumerate}
\item[(S1)] $T$ is a bijection between $[\Lambda]$ and $\{1,2,\dots, K\}$,
\item[(S2)] $T$ restricts to a bijection between $[\lambda]$ and $\{1, 2, \ldots, k\}$,
\end{enumerate}
we define a variety $Y_{T}$, as follows.
Let $f_1,\ldots,f_K\in \mathbb{C}^K$ be the standard ordered basis, and let $F_p=\vspan\{f_1,\ldots,f_p\}$ for all $p$ with $1\leq p\leq K$. Define $N_T$ to be the nilpotent endomorphism such that for $(i,j)\in [\Lambda]$,
\begin{align}
N_T(f_{T(i,j)}) = 
\begin{cases} 
0 & \text{ if }(i,j)\text{ is on the right edge of }[\Lambda],\\
f_{T(i,j+1)} & \text{ otherwise.}
\end{cases}
\end{align}
Note that $N_T$ has Jordan type
$\Lambda$ by construction.  
Define
\begin{align}
    Y_T \coloneqq Y_{n,\lambda,s,T}\coloneqq \{V_\bullet\in\Fl_{(1^n)}(\mathbb{C}^K) \mid N_TV_i\subseteq V_{i-1}\text{ for all } i\text{, and } F_k\subseteq V_n\},
\end{align}
which is a specific instance of the variety $Y_{n,\la,s}$.

In order to ensure that the intersection of $Y_{T}$ with the Schubert decomposition of $\Fl_{(1^n)}(\bC^K)$ is a paving by affines,
we must first put further conditions on $T$. 
We say that $T$ is {\bf $(n,\lambda,s)$-Schubert compatible} if (S1), (S2), and the following conditions hold:
\begin{enumerate}
\item[(S3)] $T$ is decreasing along each row from left to right.
\item[(S4)] For all $(i,j)\in [\lambda]$, the label $T(i,j)$ is greater than all labels in column $j+1$.
\item[(S5)] The labels of $T$ in the right edge of $[\Lambda]$ form an increasing sequence when read from top to bottom.
\item[(S6)] Whenever $T(a,b)>T(c,d)$ for $b,d>1$, then $T(a,b-1)>T(c,d-1)$.
\end{enumerate}
If $n$, $\lambda$, and $s$ are obvious from context, we will simply say $T$ is {\bf Schubert compatible}.

\begin{figure}
    \centering
    \includegraphics[scale=0.45]{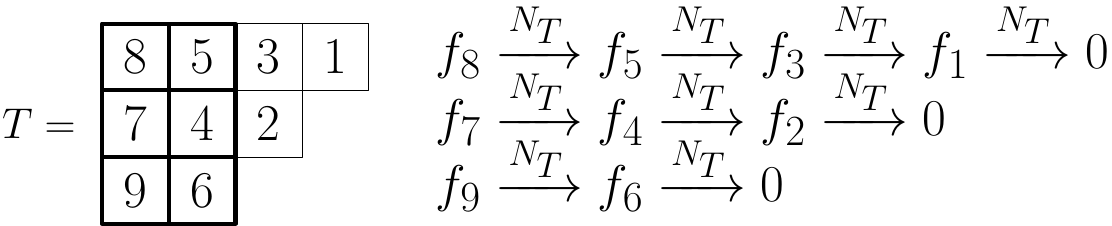}
    \caption{A Schubert-compatible filling $T$ of $\Lambda(5,(2,1),3)$ and the action of $N_T$ on the basis vectors.}
    \label{fig:SchubertCompatible}
\end{figure}

\begin{example}
Let $n=5$, $\lambda = (2,1)$, and $s=3$. Let $T$ be the Schubert-compatible filling of $\Lambda(5,(2,1),3)$ in Figure~\ref{fig:SchubertCompatible}. Then $Y_{5,(2,1),3,T}$ is the variety of partial flags $V_\bullet = (V_1,V_2,V_3,V_4,V_5)\in \Fl_{(1,1,1,1,1)}(\bC^9)$ such that the following conditions hold:
\begin{align}
N_T V_i &\subseteq V_{i-1} \text{ for } i \leq 5, \\
V_5 &\supseteq F_3 = \vspan\{f_1,f_2,f_3\}.
\end{align}
For example, the partial flag 
\begin{equation}\label{eq:PartialFlag}
\vspan\{ f_1 \} \subset \vspan\{ f_1,f_2 \} \subset 
\vspan\{ f_{1},f_{2},f_{4} \} \subset \vspan\{ f_{1},f_{2},f_{3},f_{4} \} \subset \vspan\{ f_{1},f_{2},f_{3},f_{4},f_{7}\}.
\end{equation}
is in $Y_{5,(2,1),3}$.
\end{example}

\begin{example}\label{ex:ReadingOrder}
We construct a Schubert-compatible filling $T$ for any shape $\Lambda$ as follows. 
Let the \textbf{reading order} of $[\Lambda]$ be the ordering of the cells given by scanning down the columns of $[\Lambda]$ from right to left. For $(i,j)\in [\Lambda]$, if $(i,j)$ is the $p$-th cell in the reading order, then let $T(i,j)=p$. We refer to $T$ as the filling of $[\Lambda]$ according to reading order. It can be checked that $T$ is a Schubert-compatible filling. See the left-most filling in Figure~\ref{fig:ReadingOrder} for an example of such a filling with $n=7$, $\lambda = (2,2)$, and $s=4$.
\end{example}

\begin{lemma}\label{lem:S6}
Suppose $T$ is a Schubert-compatible filling of $[\Lambda]$. If $(i,j)$ is not on the right edge of $[\Lambda]$, then 
\begin{equation}
N_T (F_{T(i,j)}\setminus F_{T(i,j)-1}) \subseteq F_{T(i,j+1)}\setminus F_{T(i,j+1)-1}.
\end{equation}
\end{lemma}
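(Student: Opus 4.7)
The plan is to unpack each piece of the statement by tracking what $N_T$ does to an arbitrary basis expansion, using (S6) as the key combinatorial input.

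Fix a non-right-edge cell $(i,j)$ and let $v \in F_{T(i,j)} \setminus F_{T(i,j)-1}$, so that
\begin{equation*}
v = \sum_{p \leq T(i,j)} c_p\, f_p, \qquad c_{T(i,j)} \neq 0.
\end{equation*}
Since $T$ is a bijection between $[\Lambda]$ and $\{1,\ldots,K\}$, I may write each index $p$ occurring with $c_p \neq 0$ as $p = T(a,b)$ for a unique cell $(a,b) \in [\Lambda]$ satisfying $T(a,b) \leq T(i,j)$. I will then analyze $N_T(v) = \sum_p c_p\, N_T(f_p)$ cell by cell and verify both the ``$\subseteq F_{T(i,j+1)}$'' containment and the ``$\setminus F_{T(i,j+1)-1}$'' non-containment.

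First, I would show $N_T(v) \in F_{T(i,j+1)}$. For each $(a,b)$ appearing in the expansion there are two cases. If $(a,b)$ is on the right edge of $[\Lambda]$, then $N_T(f_{T(a,b)}) = 0$, which lies in $F_{T(i,j+1)}$. Otherwise $(a,b+1) \in [\Lambda]$ and $N_T(f_{T(a,b)}) = f_{T(a,b+1)}$; here I invoke the contrapositive of (S6) applied to the non-right-edge cells $(a,b)$ and $(i,j)$: from $T(a,b) \leq T(i,j)$ one gets $T(a,b+1) \leq T(i,j+1)$, so $f_{T(a,b+1)} \in F_{T(i,j+1)}$. Summing over $p$ gives $N_T(v) \in F_{T(i,j+1)}$.

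Next, I would verify that the coefficient of $f_{T(i,j+1)}$ in $N_T(v)$ is nonzero. A term $c_{T(a,b)} f_{T(a,b+1)}$ can contribute to $f_{T(i,j+1)}$ only when $T(a,b+1) = T(i,j+1)$, and by injectivity of $T$ this forces $(a,b+1) = (i,j+1)$, i.e. $(a,b) = (i,j)$. Hence the coefficient of $f_{T(i,j+1)}$ in $N_T(v)$ equals $c_{T(i,j)} \neq 0$, so $N_T(v) \notin F_{T(i,j+1)-1}$. Combining the two conclusions gives $N_T(v) \in F_{T(i,j+1)} \setminus F_{T(i,j+1)-1}$, as desired.

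The main (very mild) obstacle is simply reading (S6) in the right direction: the hypothesis is phrased as an implication going from column $b$ to column $b-1$, but what we need here is the reverse --- passing from an inequality among the labels in column $b$ of the two rows to the corresponding inequality in column $b+1$. This is exactly the contrapositive, which applies because both $(a,b)$ and $(i,j)$ are assumed non-right-edge so that their right neighbors exist. Once this reformulation is in hand, the argument is just bookkeeping on basis vectors.
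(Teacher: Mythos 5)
Your proof is correct and takes essentially the same approach as the paper: both arguments expand $v$ in the $f$-basis, apply (S6) term by term (noting right-edge cells map to $0$) to get the containment $N_T F_{T(i,j)} \subseteq F_{T(i,j+1)}$, and then observe that the $f_{T(i,j+1)}$-coefficient of $N_T v$ equals $c_{T(i,j)} \neq 0$ to rule out membership in $F_{T(i,j+1)-1}$. Your framing of (S6) via its contrapositive and the explicit appeal to injectivity of $T$ are minor expository differences, not a different route.
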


\begin{proof}
We have $N_T \, f_{T(i,j)} = f_{T(i,j+1)}$ by definition. Let $a$ and $b$ be indices such that $f_{T(a,b)}\in F_{T(i,j)}$, which means $T(a,b) < T(i,j)$. If $(a,b)$ is not on the right edge of $[\Lambda]$, by (S6) we have $T(a,b+1) < T(i,j+1)$, and hence $N_T\, f_{T(a,b)} = f_{T(a,b+1)}\in F_{T(i,j+1)}$. Otherwise, if $(a,b)$ is on the right edge, then $N_T\, f_{T(a,b)} = 0$. In either case, we have $N_T F_{T(i,j)} \subseteq F_{T(i,j+1)}$. 

If $v\in F_{T(i,j)}\setminus F_{T(i,j)-1}$, then the expansion of $v$ in the $f$ basis has a nonzero $f_{T(i,j)}$ coefficient. Therefore, the expansion of $N_T\, v$ in the $f$ basis has a nonzero $f_{T(i,j+1)}$ coefficient, so $N_T\, v\notin F_{T(i,j+1)-1}$. The lemma then follows.
\end{proof}

For $1\leq i \leq s$, define a {\bf flattening function} $\fl_T^{(i)}$ 
and a filling $T^{(i)}$ as follows.
If $i\leq \ell(\lambda)$, then $\fl_T^{(i)}$ is the unique order-preserving function with the following domain and codomain,
\begin{align}
    \fl_T^{(i)} : [K]\setminus\{ T(i,\Lambda_i) \} \to [K-1].
\end{align}
If $i> \ell(\lambda)$, then $\fl_T^{(i)}$ is the unique order preserving function
\begin{align}
    \fl_T^{(i)}: [K]\setminus(\{T(i,\Lambda_i)\}\cup \{T(i',1)\mid i'\neq i\}) \to [K-s].
\end{align}
Now if $i\leq \ell(\lambda)$, let $T^{(i)}$ be the filling obtained by deleting the last cell in row $i$, applying $\fl_T^{(i)}$ to the label in each cell, and reordering the rows so that the labels of the cells in the new right edge are increasing from top to bottom.
If $i>\ell(\lambda)$, we form $T^{(i)}$ in the same way except we also delete the cell $(i',1)$ and shift row $i'$ to the left by one unit for every $i'\neq i$ before applying $\fl_T^{(i)}$ to every label and reordering the rows.

\begin{example}
In Figure~\ref{fig:ReadingOrder} we have an example of a Schubert-compatible filling $T$ and the fillings $T^{(1)}$ and $T^{(3)}$. When constructing $T^{(3)}$, the cells labeled by $7,13,14$, and $16$ are deleted, and rows $1,2$ and $4$ are shifted left by one unit. The cells are relabeled as follows: $\fl_T^{(3)}(8)=7$, $\fl_T^{(3)}(9) = 8$, $\fl_T^{(3)}(10) = 9$, $\fl_T^{(3)}(11)=10$, $\fl_T^{(3)}(12) = 11$, and $\fl_T^{(3)}(15)=12$. Then rows $3$ and $4$ are swapped to obtain $T^{(3)}$. It can be checked that both $T^{(1)}$ and $T^{(3)}$ are Schubert compatible.
\end{example}

Recall that, given a partition $\lambda$, then $\lambda^{(i)}$ is defined to be the partition whose Young diagram is obtained from $[\lambda]$ by removing one box from the $i$-th row and then reordering the rows in decreasing order (by number of boxes) if necessary.

\begin{lemma}
The filling $T^{(i)}$ is of partition shape.  If $i \leq \ell(\lambda)$, then $T^{(i)}$ is $(n-1,\lambda^{(i)},s)$-Schubert compatible. If $i > \ell(\lambda)$, then $T^{(i)}$ is $(n-1,\lambda,s)$-Schubert compatible.
\end{lemma}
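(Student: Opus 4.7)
The plan is to first compute the row-length multiset of $T^{(i)}$ and confirm it matches the claimed partition, then verify the six Schubert-compatibility conditions.

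For the shape: if $i \leq \ell(\lambda)$ the only removed cell is $(i,\Lambda_i)$, so the row-length multiset equals $\{\Lambda_j : j \neq i\} \cup \{\Lambda_i - 1\}$; since $\Lambda_j = n-k+\lambda_j$, this sorts to $\Lambda(n-1,\lambda^{(i)},s)$. If $i > \ell(\lambda)$, every row loses exactly one cell, so row $j$ has length $\Lambda_j - 1 = (n-1)-k+\lambda_j$, matching $\Lambda(n-1,\lambda,s)$.

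I would then dispatch conditions (S1)--(S6) in order. (S1) holds because $\fl_T^{(i)}$ is an order-preserving bijection onto the correct interval. For (S2), in case 1 the removed label $T(i,\Lambda_i)$ lies in $\{1,\ldots,k\}$ by (S2) of $T$, so the remaining $[\lambda]$-labels flatten to $\{1,\ldots,k-1\}$; in case 2 every removed label exceeds $k$, so the labels of $[\lambda]$ are unchanged. (S3) is preserved since we remove only boundary cells and $\fl_T^{(i)}$ preserves order. (S5) holds by construction of the reordering. For (S4) and (S6), let $\sigma$ be the row reindexing from $T^{(i)}$ back to $T$; in case 1 column indices match, while in case 2 column $b$ of row $a$ in $T^{(i)}$ corresponds to column $b+1$ of row $\sigma(a)$ in $T$ unless $\sigma(a) = i$. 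Under this translation, each instance of (S4) or (S6) for $T^{(i)}$ becomes an instance of (S4) or (S6) for $T$ at cells that were not deleted, so these conditions are inherited.

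The main obstacle is showing that the reordering demanded by (S5)---sort rows by right-edge label---is compatible with a partition-shape ordering. For rows of equal post-removal length this is automatic, since such rows can be freely permuted without disturbing the shape. The content is to check that rows of distinct post-removal length already appear in the correct order when sorted by right-edge label. In case 1, this reduces to two inequalities involving row $i$'s new right-edge label $T(i,\Lambda_i-1)$: first, $T(a,\Lambda_a) < T(i,\Lambda_i-1)$ for $\Lambda_a \geq \Lambda_i$, which follows from (S5) for $T$ combined with (S3) when $a < i$, and from (S4) applied to $(i,\Lambda_i-1) \in [\lambda]$ when $a > i$; second, $T(i,\Lambda_i-1) < T(b,\Lambda_b)$ for $\Lambda_b < \Lambda_i - 1$, which follows from (S4) applied to $(b,\Lambda_b) \in [\lambda]$ combined with (S3) for row $i$. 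The borderline case $\lambda_i = 1$, where $(i,\Lambda_i-1)$ falls just outside $[\lambda]$, is handled directly by the (S2) bipartition of labels (labels in $[\lambda]$ are $\leq k$ and those outside are $> k$), and one checks that no row $b$ with $\Lambda_b < \Lambda_i - 1 = n-k$ exists in this case. Case 2 is simpler: all rows $i' \neq i$ shift left by the same amount, preserving their relative right-edge ordering, and row $i$ falls into the length-$(n-k-1)$ block where any order yields a partition.
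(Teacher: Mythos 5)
Your proof is correct and follows essentially the same approach as the paper: first identify the shape of $T^{(i)}$, then verify (S1)--(S6) one at a time, noting that deletion, flattening, shifting, and row-sorting each preserve the relevant condition. The one place you go beyond the paper is the careful verification that sorting rows by their new right-edge label actually produces a partition shape; the paper dispatches this with the single remark that ``(S4) forces $T^{(i)}$ to have partition shape after sorting,'' whereas you spell out the (S3)/(S4)/(S5)/(S2)-bipartition casework behind that claim, which is a reasonable amount of extra detail to supply.
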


\begin{proof}
If $i\leq\ell(\la)$, the condition (S4) forces $T^{(i)}$ to have partition shape after sorting the rows by the labels in the right edge, and $T^{(i)}$ is of shape $\Lambda(n-1,\la^{(i)},s)$ since one box is removed from the $i$-th row of $\lambda$ and the rows are reordered in decreasing order.  If $i> \ell(\la)$, then $T^{(i)}$ is of shape $\Lambda(n-1,\la,s)$ since one box is removed from each row and, by (S2), any reordering only affects rows below the $\ell(\la)$-th row.  It also follows by construction that (S1) and (S2) hold for $T^{(i)}$.

The operations of deleting a cell, applying the flattening function to the labels, and possibly shifting a row to the left all preserve (S3), so $T^{(i)}$ has property (S3). Since (S4) only concerns labels of $[\lambda]$, and either all cells of $[\lambda]$ remain in place (except the one that is removed) or all cells of $[\la]$ are shifted left one column during the process of constructing $T^{(i)}$, we see $T^{(i)}$ also satisfies (S4). The property (S5) is automatically satisfied since we resort the rows by the label in the rightmost cell. Finally, $T^{(i)}$ satisfies (S6) since deleting a cell, relabeling, swapping rows, and shifting a row to the left all preserve the property (S6). Therefore, $T^{(i)}$ is Schubert compatible. 
\end{proof}

\begin{figure} 
    \centering
    \includegraphics[scale=0.45]{Figures/ReadingOrder.pdf}
    \caption{The Schubert-compatible filling $T$ of $[\Lambda] = [\Lambda(7,(2,2),4)]$ determined by reading order and the fillings $T^{(1)}$ and $T^{(3)}$, which are also Schubert compatible.}
    \label{fig:ReadingOrder}
\end{figure}

Recall that the set of injective maps $w:[n]\rightarrow [K]$ indexes the Schubert cells of $\Fl_{(1^n)}(\mathbb{C}^K)$.
\begin{definition}
Given $w:[n]\rightarrow[K]$ injective, we say that $w$ is {\bf admissible} with respect to $T$ if both of the following hold.
\begin{itemize}
    \item[(A1)] The image of the map $w$ contains $[k]$.
    \item[(A2)] For $i\leq n$, if $w(i)=T(a,b)$ for $(a,b)$ not on the right edge of $[\Lambda]$, then $T(a,b+1)\in \{w(1),\dots, w(i-1)\}$. 
\end{itemize}
\end{definition}

\begin{lemma}\label{lem:NonemptyIntersections}
Assume $T$ is a Schubert-compatible filling.  Then  $C_w\cap Y_{T}\neq\emptyset$ if and only if $w$ is admissible.
\end{lemma}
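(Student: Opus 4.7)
The plan is to prove both implications directly: for the reverse direction I will exhibit an element of $C_w \cap Y_T$, and for the forward direction I will run a dimension-counting argument against the Schubert cell formula.

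For the reverse direction, the natural candidate is the coordinate flag $F_\bullet^{(w)}$, which lies in $C_w$ essentially by definition (take all $\alpha_{h,p}=0$ in \eqref{eq:SchubCellCoords}). Admissibility condition (A1) immediately gives $F_k\subseteq F_n^{(w)}$. Condition (A2), together with the fact that $N_T$ sends $f_{T(a,b)}$ to either $f_{T(a,b+1)}$ or $0$, directly implies $N_T F_i^{(w)} \subseteq F_{i-1}^{(w)}$ for every $i \leq n$, so $F_\bullet^{(w)} \in Y_T$.

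For the forward direction, assume $V_\bullet \in C_w \cap Y_T$. Condition (A1) follows at once from the Schubert cell dimension formula \eqref{eqn:SchubCellDef}: the requirement $F_k\subseteq V_n$ forces $\#\{j\le n : w(j)\le k\} = k$, which, since $w$ is injective, means $[k]\subseteq \mathrm{image}(w)$. For (A2), take the distinguished basis vector $v_i = f_{w(i)} + \sum_{h} \alpha_{h,i} f_h$ of $V_i$ from \eqref{eq:SchubCellCoords} with $w(i) = T(a,b)$, where $(a,b)$ is not on the right edge. Condition (S6), equivalently Lemma~\ref{lem:S6}, says that $N_T$ preserves the order on $f$-indices whenever both right neighbors exist. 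Consequently each ``lower'' term $\alpha_{h,i} N_T f_h$ with $h < T(a,b)$ has $f$-support strictly below $T(a,b+1)$, so that
\[
N_T v_i = f_{T(a,b+1)} + u, \qquad u \in F_{T(a,b+1)-1}.
\]
In particular, $N_T v_i$ lies in $F_{T(a,b+1)} \setminus F_{T(a,b+1)-1}$.

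To finish (A2), combine $N_T v_i \in V_{i-1}$ (the defining condition of $Y_T$) with the Schubert dimension formula. Since $w(i) = T(a,b) > T(a,b+1)$ by (S3), one has $V_i \cap F_{T(a,b+1)} = V_{i-1} \cap F_{T(a,b+1)}$, so $N_T v_i$ belongs to $V_{i-1} \cap F_{T(a,b+1)}$. If $T(a,b+1) \notin \{w(1),\ldots,w(i-1)\}$, then the Schubert formula forces $V_{i-1}\cap F_{T(a,b+1)} = V_{i-1}\cap F_{T(a,b+1)-1}$, contradicting $N_T v_i \notin F_{T(a,b+1)-1}$. Hence $T(a,b+1)\in\{w(1),\ldots,w(i-1)\}$, and (A2) holds. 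The main obstacle is the controlled computation of $N_T v_i$; once Lemma~\ref{lem:S6} (built from (S3) and (S6)) is in hand, everything else reduces to a clean dimension comparison.
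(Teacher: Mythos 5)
Your proof is correct and follows essentially the same route as the paper: you use $F_\bullet^{(w)}$ for the reverse implication, and for the forward implication you apply the Schubert cell dimension formula for (A1) and Lemma~\ref{lem:S6} plus the dimension formula for (A2). The only cosmetic difference is that you phrase the (A2) argument in terms of the specific coordinate vector $v_i$ rather than a generic element of $V_i \cap (F_{T(a,b)}\setminus F_{T(a,b)-1})$; otherwise the argument is the same.
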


\begin{proof}
If $w$ is admissible, then the partial permutation flag $F^{(w)}_\bullet$ is in $C_w\cap Y_{T}$, so $C_w\cap Y_{T} \neq \emptyset$. Therefore, it suffices to prove that if $C_w\cap Y_{T}\neq\emptyset$, then $w$ is admissible.

Given an injective map $w: [n]\to [K]$, recall that 
\begin{align} \label{eqn:RecallSchubCellDef}
    C_w = \{V_\bullet \in \Fl_{(1^n)}(\bC^K) \st \dim(V_i\cap F_j) = \#\{p\leq i \st w(p)\leq j\}\}.
\end{align}
Given $V_\bullet\in \Fl_{(1^n)}(\bC^K)$, then $F_k\subseteq V_n$ if and only if $\dim(V_n\cap F_k) = k$. Therefore, $F_k\subseteq V_n$ for some $V_\bullet\in C_w$ if and only if (A1) holds.

Suppose $C_w\cap Y_{T}\neq\emptyset$, and let $V_\bullet \in C_w\cap Y_{T}$. Suppose there exists $i\leq n$ such that $w(i) = T(a,b)$ for some cell $(a,b)$ not on the right edge of $[\Lambda]$.
Then $\dim(V_i\cap F_{T(a,b)})>\dim(V_i\cap F_{T(a,b)-1})$, so $V_i\cap (F_{T(a,b)}\setminus F_{T(a,b)-1})\neq\emptyset$.  By Lemma~\ref{lem:S6}, we have $N_T (F_{T(a,b)}\setminus F_{T(a,b)-1}) \subseteq F_{T(a,b+1)}\setminus F_{T(a,b+1)-1}$. Hence,
\begin{align}
    N_T V_i \cap (F_{T(a,b)} \setminus F_{T(a,b+1)-1}) \neq \emptyset,
\end{align}
and since $N_T V_i\subseteq V_{i-1}$, then
\begin{align}
    V_{i-1} \cap (F_{T(a,b+1)}\setminus F_{T(a,b+1)-1}) \neq \emptyset.
\end{align}
Therefore, $\dim(V_{i-1}\cap F_{T(a,b+1)})>\dim(V_{i-1}\cap F_{T(a,b+1)-1})$, which implies by \eqref{eqn:RecallSchubCellDef} that $T(a,b+1)=w(i')$ for some $i'\leq i-1$. Hence, (A2) holds and $w$ is admissible.
\end{proof}

We define a linear transformation related to $N_T$ that we use throughout the paper.
\begin{definition}\label{def:NTranspose}
Define the nilpotent endomorphism $N^t_T$ of $\bC^K$ on the basis $\{f_1,\dots, f_K\}$ by
\begin{align}
N^t_T \,f_{T(i,j)} \coloneqq \begin{cases} f_{T(i,j-1)} & \text{ if } j>1,\\ 0 & \text{ if }j = 1.\end{cases}
\end{align}
\end{definition}
Our notation is motivated by the fact that $N^t_T$ is the transpose of $N_T$ with respect to the ordered basis $\{f_i\}$. The transformation $N^t_T$ has the crucial property that 
\begin{align}\label{eq:NNTranspose}
N_TN^t_T \,f_{T(i,j)} = \begin{cases} f_{T(i,j)} &\text{ if } j > 1,\\ 0 &\text{ if } j = 1.\end{cases}
\end{align}
In particular, $I - N_TN_T^t$ vanishes on $\mathrm{im}(N_T)$. Similarly, $I - N_T^t N_T$ vanishes on $\mathrm{im}(N_T^t)$, which is complementary to $\mathrm{ker}(N_T)$.

We now define a family of unipotent linear transformations on $\bC^K$.

\begin{definition}\label{newdef:InvertibleTransf}
Let $i$ be an integer with $1\leq i\leq s$, and let $v$ be a vector in 
$\vspan\{f_{T(h,\Lambda_h)}\st h<i\}$. 
Define the linear map $U = U_{i,v}: \bC^K\to \bC^K$ by setting
\begin{alignat}{4}
    \label{neweq:InvertibleTransfEq1}
    U f_{T(i', j)} &= f_{T(i', j)} && \text{ for all }(i',j)\text{ where } i' \ne i, \\
    \label{neweq:InvertibleTransfEq2}
    U f_{T(i, \Lambda_i)} &= f_{T(i, \Lambda_i)} + v, \\
    \label{neweq:InvertibleTransfEq3}
    U f_{T(i, j)} &= N_T^t U f_{T(i, j+1)} && \text{ recursively for all } j < \Lambda_i.
\end{alignat}
Note that \eqref{neweq:InvertibleTransfEq2} and \eqref{neweq:InvertibleTransfEq3} together are equivalent to $U f_{T(i, j)} = f_{T(i, j)} + (N_T^t)^{\Lambda_i - j} v$.
\end{definition}

\begin{example}\label{ex:6223unip}
Let $n=6$, $\lambda=(2,2)$, and $s=4$.  Now let $T$ be the filling
\[
\includegraphics[scale=0.45]{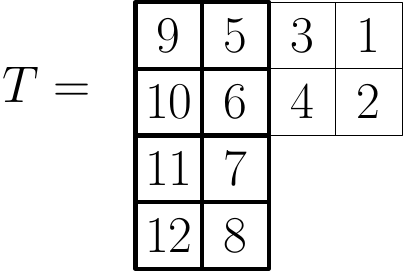}.
\]

If $i=2$ and $v=af_1$, then $U = U_{i,v}$ is given by the matrix (in terms of the basis $f_1,\ldots,f_{12}$):
\setcounter{MaxMatrixCols}{12}
\begin{align}
    U=\begin{bmatrix}
    1 & a & 0 & 0 & 0 & 0 & 0 & 0 & 0 & 0 & 0 & 0 \\
    0 & 1 & 0 & 0 & 0 & 0 & 0 & 0 & 0 & 0 & 0 & 0 \\
    0 & 0 & 1 & a & 0 & 0 & 0 & 0 & 0 & 0 & 0 & 0 \\
    0 & 0 & 0 & 1 & 0 & 0 & 0 & 0 & 0 & 0 & 0 & 0 \\
    0 & 0 & 0 & 0 & 1 & a & 0 & 0 & 0 & 0 & 0 & 0 \\
    0 & 0 & 0 & 0 & 0 & 1 & 0 & 0 & 0 & 0 & 0 & 0 \\
    0 & 0 & 0 & 0 & 0 & 0 & 1 & 0 & 0 & 0 & 0 & 0 \\
    0 & 0 & 0 & 0 & 0 & 0 & 0 & 1 & 0 & 0 & 0 & 0 \\
    0 & 0 & 0 & 0 & 0 & 0 & 0 & 0 & 1 & a & 0 & 0 \\
    0 & 0 & 0 & 0 & 0 & 0 & 0 & 0 & 0 & 1 & 0 & 0 \\
    0 & 0 & 0 & 0 & 0 & 0 & 0 & 0 & 0 & 0 & 1 & 0 \\
    0 & 0 & 0 & 0 & 0 & 0 & 0 & 0 & 0 & 0 & 0 & 1
    \end{bmatrix}.
\end{align}

\end{example}

\begin{example}\label{ex:NTransposeShift}
Let $n = 5$, $\lambda = (2, 1)$, $s = 3$, and let $T$ be as shown in Figure~\ref{fig:NTransposeShift}.
If $i = 4$ and $v = f_8+2f_3+f_1$, then $U = U_{i,v}$ takes $f_j \mapsto f_j$ for all $j \ne 10, 11$, and
\begin{alignat}{3}
    U(f_{10}) &= f_{10} + v &= f_{10} + f_8 + 2f_3 + f_1, \\
    U(f_{11}) &= f_{11} + N_T^t(v) &{}= f_{11} + f_9 + 2f_6 + f_2.
\end{alignat}
We may visualize the basis vectors that appear in $U(f_{10})$ and $U(f_{11})$ with the middle and right diagrams in Figure~\ref{fig:NTransposeShift},
where the indices of the basis vectors that appear in $U(f_{10})$ are highlighted on the left, the basis vectors of $U(f_{11})$ are highlighted on the right, and the indices of the leading terms $f_{10}$ and $f_{11}$ are circled. Note that in general the extra summand in the definition of $U(f_{T(p,q)})$ in the $p=i$ case is ``$v$ shifted left by $\Lambda^i-q$ cells'' and that this definition depends on $T$.
\end{example}

\begin{figure}[t]
    \centering
    \includegraphics[scale=0.45]{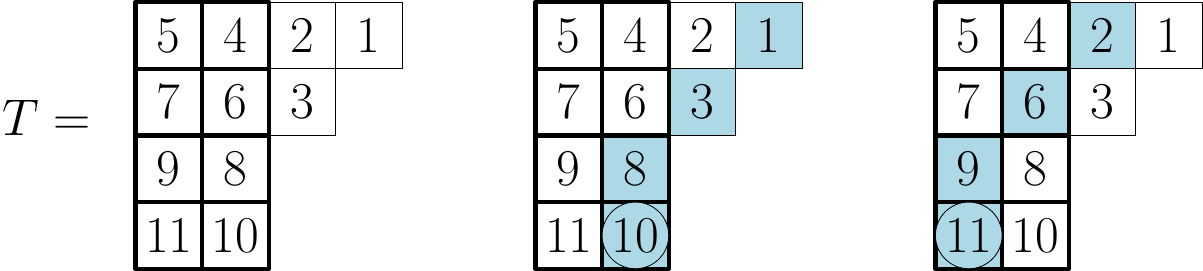}
    \caption{The Schubert-compatible filling $T$ used in Example~\ref{ex:NTransposeShift} on the left, and illustrations of the basis vectors that appear in $U(f_{10})$ and $U(f_{11})$ in the middle and right, respectively.}
    \label{fig:NTransposeShift}
\end{figure}

\begin{lemma}\label{lem:InvertibleTransf}
Let $U = U_{i,v}$ be the linear map defined in Definition \ref{newdef:InvertibleTransf}. Then $U$
is unipotent, is upper triangular with respect to the ordered basis $f_1,\ldots,f_K$, and satisfies the equation $N_TU = UN_T$.
\end{lemma}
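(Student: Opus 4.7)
The plan is to first establish the closed form $U f_{T(i,j)} = f_{T(i,j)} + (N_T^t)^{\Lambda_i - j} v$ by a short induction on $\Lambda_i - j$, starting from \eqref{neweq:InvertibleTransfEq2} and applying the recursion \eqref{neweq:InvertibleTransfEq3}. (This identity is already recorded in Definition \ref{newdef:InvertibleTransf}.) Every subsequent claim then reduces to a verification on the ordered basis $\{f_1,\dots, f_K\}$, using the combinatorial properties (S5) and (S6) of the Schubert-compatible filling $T$ together with the identity \eqref{eq:NNTranspose}.

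For upper triangularity and unipotence, I would expand $v = \sum_{h < i} c_h f_{T(h, \Lambda_h)}$. Since $h < i$ forces $\Lambda_h \geq \Lambda_i$, one computes $(N_T^t)^{\Lambda_i - j} f_{T(h, \Lambda_h)} = f_{T(h, \Lambda_h - \Lambda_i + j)}$, a nonzero basis vector because the column index $\Lambda_h - \Lambda_i + j \geq j \geq 1$. To conclude that each such basis vector has index strictly less than $T(i,j)$, start from $T(h, \Lambda_h) < T(i, \Lambda_i)$ (which is (S5)) and iterate (S6) exactly $\Lambda_i - j$ times; the hypothesis $b, d \geq 2$ of (S6) holds at every intermediate step because $\Lambda_h \geq \Lambda_i$. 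The outcome is $T(h, \Lambda_h - \Lambda_i + j) < T(i, j)$, proving that $U$ is upper triangular. The diagonal entry of $U$ on every basis vector is $1$, so $U$ is unipotent.

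For the commutation relation $N_T U = U N_T$, I would check it on each basis vector $f_{T(i', j)}$. If $i' \ne i$, then $U$ fixes both $f_{T(i', j)}$ and $N_T f_{T(i', j)}$, so both sides agree with $N_T f_{T(i',j)}$. If $(i', j) = (i, \Lambda_i)$, then $U N_T f_{T(i, \Lambda_i)} = 0$ since $(i, \Lambda_i)$ lies on the right edge, while $N_T U f_{T(i, \Lambda_i)} = N_T v = 0$ because $v$ is supported on right-edge basis vectors, which all lie in $\ker(N_T)$. For $i' = i$ and $j < \Lambda_i$, we have $U N_T f_{T(i, j)} = U f_{T(i, j+1)}$, while \eqref{neweq:InvertibleTransfEq3} gives $N_T U f_{T(i,j)} = N_T N_T^t U f_{T(i, j+1)}$, so it suffices to show that $N_T N_T^t$ acts as the identity on $U f_{T(i, j+1)}$. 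By \eqref{eq:NNTranspose}, $N_T N_T^t$ fixes $f_{T(a, b)}$ whenever $b \geq 2$, and every basis vector appearing in $U f_{T(i, j+1)} = f_{T(i, j+1)} + (N_T^t)^{\Lambda_i - j - 1} v$ has column index at least $j + 1 \geq 2$ (the summands coming from $v$ have column index $\Lambda_h - \Lambda_i + j + 1 \geq j + 1$).

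The main obstacle is the upper-triangularity step, where one needs (S6) to propagate the inequality $T(h, \Lambda_h) < T(i, \Lambda_i)$ from the right edge leftward across all the columns that carry the $(N_T^t)^m v$ corrections. Once that propagation is in hand, the other two conclusions reduce to routine applications of the closed form for $U$ and \eqref{eq:NNTranspose}.
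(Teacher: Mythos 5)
Your proof is correct and follows essentially the same approach as the paper's: both verify upper triangularity via (S5) at the right edge and propagate it leftward using (S6), and both establish commutation by observing that $I - N_T N_T^t$ annihilates $Uf_{T(i,j+1)}$ because all its terms lie in columns $\geq 2$. The only difference is cosmetic — you work directly from the closed form $Uf_{T(i,j)} = f_{T(i,j)} + (N_T^t)^{\Lambda_i - j}v$ and iterate (S6) explicitly, whereas the paper phrases the same reasoning as a reverse induction on $j$.
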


\begin{proof}
It suffices to check the following two properties for each $p$:
\begin{enumerate}
    \item $Uf_p$ is of the form $Uf_p = f_p + \sum_{h< p} a_h f_h$, and
    \item $N_TUf_p = UN_Tf_p$.
\end{enumerate}
First, in the case $p = T(i',j)$ for $i'\neq i$, as in \eqref{neweq:InvertibleTransfEq1}, properties (1) and (2) trivially hold. Second, in the case $p=T(i,\Lambda_i)$, as in \eqref{neweq:InvertibleTransfEq2}, then (1) follows by our choice of $v$ and condition (S5) of Schubert compatibility. Furthermore, (2) holds because both sides are zero.

Finally, suppose $p = T(i,j)$ for $j<\Lambda_i$, as in \eqref{neweq:InvertibleTransfEq3}. Then $(i, j)$ is not the rightmost cell in its row, and we have by definition
\begin{equation}
Uf_{T(i, j)} = N_T^t U f_{T(i, j+1)}.
\end{equation}
By reverse induction on $j$, property (1) holds for $p=T(i,j+1)$. Property (1) for $p=T(i,j)$ then follows by condition (S6) of Schubert compatibility and the fact that $f_{T(i, j)} = N_T^t f_{T(i, j+1)}$.

It remains to show property (2) holds in this case. By the choice of $v$ and reverse induction on $j$, all the terms of the expansion of $Uf_{T(i,j+1)}$ in property (1) correspond to cells of $T$ in column $j+1$ and to the right. In particular,
\begin{equation} \label{eq:column j+1}
U f_{T(i, j+1)} \in \mathrm{im}(N_T).
\end{equation}
We compute
\begin{align}
    (UN_T - N_TU)f_{T(i, j)} &= U(N_Tf_{T(i, j)}) - N_T (U f_{T(i, j)}) \\
    &= U (f_{T(i, j+1)}) - N_T (N_T^t U f_{T(i, j+1)}) \text{ (by \eqref{neweq:InvertibleTransfEq3})}\\
    &= (I - N_TN_T^t) U f_{T(i, j+1)},
\end{align}
which is zero because $I - N_T N_T^t$ vanishes on $\mathrm{im}(N_T)$. Thus, property (2) holds.
\end{proof}

We extend the definition of $\fl_T^{(i)}$ so that it is also a function on admissible injective maps.  In particular, let $w$ be admissible with respect to $T$, and let $w(1) = T(i, \Lambda_i)$. Define 
\begin{align}
    \fl_T^{(i)}(w): [n-1] \rightarrow [K-1] & \text{ if } i\leq \ell(\lambda),\\
    \fl_T^{(i)}(w): [n-1] \rightarrow [K-s] & \text{ if } i>\ell(\lambda)
\end{align}
to be the injective maps where $\fl_T^{(i)}(w)(j)= \fl_T^{(i)}(w(j+1))$ for $1\leq j\leq n-1$.
It is not hard to check directly that if $w$ is admissible with respect to $T$, then $\fl_T^{(i)}(w)$ is admissible with respect to $T^{(i)}$. This also follows from the next lemma.

\begin{definition}
 Define linear maps
\begin{align}
    \phi^{(i)} : \bC^{K-1} \to \bC^K & \,\,\,\text{  for  }i\leq \ell(\la),\\
    \phi^{(i)} : \bC^{K-s} \to \bC^K & \,\,\,\text{  for  }i > \ell(\la),
\end{align}
by setting $\phi^{(i)}(f_j) \coloneqq f_{(\fl_T^{(i)})^{-1}(j)}$ and extending linearly.
\end{definition}

\begin{lemma}\label{lem:CellRecursion}
We have an isomorphism
  \begin{equation}
\Phi: \vspan\{f_{T(h,\Lambda_h)}\st h<i\} \times (C_{\fl_T^{(i)}(w)} \cap Y_{T^{(i)}}) \to C_w\cap Y_T,
 \end{equation}
with the flag $\Phi(v, V_\bullet)$ defined by, for $j=1, \ldots, n$,
\begin{equation}
\Phi(v, V_\bullet)_j := U_{i,v}\big(\vspan\{f_{w(1)}\} + \phi^{(i)}V_{j-1}\big).
\end{equation}
In particular, inverting $\Phi$ gives an isomorphism
 \begin{equation}
 C_w \cap Y_T \cong \bC^{i-1} \times (C_{\fl^{(i)}_T(w)}\cap Y_{T^{(i)}}).
 \end{equation}
\end{lemma}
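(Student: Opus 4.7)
The plan is to construct an explicit inverse $\Psi$ to $\Phi$ and show both maps are regular morphisms of varieties.

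\textbf{Well-definedness of $\Phi$.} Write $\Phi(v, V'_\bullet)_j = U_{i,v}(W_j)$ with $W_j \coloneqq \vspan\{f_{w(1)}\} + \phi^{(i)}V'_{j-1}$. By Lemma~\ref{lem:InvertibleTransf} the operator $U_{i,v}$ commutes with $N_T$, so $N_T$-stability of the image reduces to $N_T W_j \subseteq W_{j-1}$; this follows from $N_T f_{w(1)} = 0$ (as $(i, \Lambda_i)$ lies on the right edge of $[\Lambda]$) together with the intertwining relation $N_T \circ \phi^{(i)} \equiv \phi^{(i)} \circ N_{T^{(i)}} \pmod{\vspan\{f_{w(1)}\}}$, which is a direct check from the definition of $\phi^{(i)}$ (the only discrepancy arises at the new rightmost cell of row $i$ in $T^{(i)}$, whose image under $N_T$ in $\bC^K$ is precisely $f_{w(1)}$). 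For $F_k \subseteq \Phi(v, V'_\bullet)_n$, I would show (a) $U_{i,v}$ fixes $F_k$ by observing that the correction terms $(N_T^t)^m v$ land inside $F_k$ (using (S2), (S5), (S6) and the partition shape of $\Lambda$), and (b) $F_k \subseteq W_n$ by tracing $\phi^{(i)}$ applied to the base flag of the smaller variety together with $\vspan\{f_{w(1)}\}$ (using admissibility (A1) to identify labels in $[k]$). Finally, the Schubert cell condition is automatic because $U_{i,v} \in B$ and $W_\bullet$ visibly satisfies the dimension conditions defining $C_w$.

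\textbf{Construction of $\Psi$.} For $V_\bullet \in C_w \cap Y_T$, the Schubert cell condition together with $N_T V_1 = 0$ forces $V_1 = \vspan\{f_{w(1)} + v\}$ for a unique $v \in \vspan\{f_{T(h, \Lambda_h)} : h < i\}$, since $\ker N_T = \vspan\{f_{T(h, \Lambda_h)} : h \le \ell(\Lambda)\}$ and by (S5) precisely the indices $h < i$ satisfy $T(h, \Lambda_h) < w(1) = T(i,\Lambda_i)$. Take this $v$ as the first coordinate of $\Psi$ and form $V''_\bullet \coloneqq U_{i,v}^{-1} V_\bullet$, which still lies in $C_w \cap Y_T$ and now has $V''_1 = \vspan\{f_{w(1)}\}$. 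The main technical step is to show $V''_j \subseteq \vspan\{f_{w(1)}\} + \phi^{(i)}(\bC^{K'})$ at every level, where $K' = K-1$ or $K-s$ as appropriate. For $i \le \ell(\la)$ this target subspace equals $\bC^K$, so the claim is trivial. For $i > \ell(\la)$, one needs the $f_{T(i',1)}$-coefficients in the Schubert coordinates of $V''_\bullet$ to vanish for all $i' \ne i$. Here a counting argument using (A1) together with the constraint $w(1) = T(i, \Lambda_i)$ shows $T(i',1) \notin \mathrm{im}(w)$ for any $i' \ne i$ (otherwise admissibility would force $\mathrm{im}(w)$ to contain $[k]$, $w(1)$, and all of row $i'$, exceeding $n$ labels); then $N_T$-stability combined with a reverse induction on column index forces the corresponding $\alpha$-coordinates to vanish. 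Given the decomposition, define $V'_{j-1} \coloneqq (\phi^{(i)})^{-1}(V''_j / \vspan\{f_{w(1)}\})$ and take $V'_\bullet$ as the second coordinate of $\Psi(V_\bullet)$.

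\textbf{Remaining verifications.} The conditions $V'_\bullet \in C_{\fl_T^{(i)}(w)}$ and $V'_\bullet \in Y_{T^{(i)}}$ follow from the definitions of $\fl_T^{(i)}(w)$, the intertwining of $\phi^{(i)}$ with the nilpotents $N_T$ and $N_{T^{(i)}}$, and the deduction of $F_{k'} \subseteq V'_{n-1}$ from $F_k \subseteq V_n$. The identities $\Phi \circ \Psi = \mathrm{id}$ and $\Psi \circ \Phi = \mathrm{id}$ then follow by unwinding the constructions, and both maps are regular morphisms because they are polynomial in the Schubert cell coordinates and the coordinates of $v$. The main obstacle is the decomposition claim for $i > \ell(\la)$: confirming that, after extracting $v$ and applying $U_{i,v}^{-1}$, the flag $V''_\bullet$ has no components in the excluded directions $f_{T(i',1)}$ for $i' \ne i$. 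This requires an interlocking use of admissibility, Schubert compatibility, and $N_T$-stability, and is the one step of the proof that is not simply formal.
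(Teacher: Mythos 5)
Your proof is correct and follows essentially the same two-step approach as the paper's: verify well-definedness of $\Phi$ via the unipotence of $U_{i,v}$, its commutation with $N_T$, order-preservation of $\fl_T^{(i)}$, and the near-intertwining $N_T\circ\phi^{(i)} \equiv \phi^{(i)}\circ N_{T^{(i)}} \pmod{\vspan\{f_{w(1)}\}}$; then construct the inverse by reading $v$ off of $V_1$ and applying $\psi^{(i)}U_{i,v}^{-1}$. One small observation: you correctly formulate the key technical containment for $i > \ell(\la)$ as a statement about $U_{i,v}^{-1}V_\bullet$ rather than $V_\bullet$ itself, which is the right object (the paper's text states the containment for $V_j'$ directly, but as written this does not literally hold until after conjugating away the $v$-dependence; your phrasing repairs this).
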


\begin{proof}
We first show that the image of $\Phi$ is indeed contained in $C_w\cap Y_T$.
First, let $V_\bullet \in C_{\fl_T^{(i)}(w)}\cap Y_{T^{(i)}}$. We examine $\Phi(0, V_\bullet)$. Since $U_{i,0}$ is the identity map, we have for each $j$
\begin{align}
    \Phi(0,V_\bullet)_j =  \vspan\{f_{w(1)}\} + \phi^{(i)}(V_{j-1}).
\end{align}
Since $\fl^{(i)}_{T}$ is order preserving, $\dim(\phi^{(i)}(V_j)\cap F_m)=\dim(V_j\cap F_{\fl^{(i)}_{T}(m)})$ for $m$ in the domain of $\fl_T^{(i)}$, so $\Phi(0,V_\bullet) \in C_w$. Since $w$ is admissible, $F_k \subseteq \Phi(0, V_\bullet)_n$. Next, we check that $\Phi(0, V_\bullet)$ is preserved by $N_T$. For $j=1$, we have $N_Tf_{w(1)} = 0$. For $j \geq 2$, we use the fact that for any $z$ in the domain of $\phi^{(i)}$, $N_T\phi^{(i)}(z)=\alpha f_{w(1)} + \phi^{(i)}N_{T^{(i)}}(z)$ for some $\alpha\in\mathbb{C}$. We calculate:
\begin{align}
    N_T\big(\vspan\{f_{w(1)}\} + \phi^{(i)}V_{j-1}\big) &= N_T \phi^{(i)}V_{j-1} \\
    &\subseteq \vspan\{f_{w(1)}\} +  \phi^{(i)}N_{T^{(i)}}V_{j-1} \\
    &\subseteq \vspan\{f_{w(1)}\} +  \phi^{(i)}V_{j-2}.
\end{align}
Hence, $\Phi(0, V_\bullet) \in C_w \cap Y_T$.

Now let $v\in \vspan\{f_{T(h,\Lambda_h)}\st h<i\}$ be arbitrary. Observe that $\Phi(v,V_\bullet) = U_{i,v}\Phi(0,V_\bullet)$, where $U_{i,v}$ acts on the partial flag $\Phi(0,V_\bullet)$ by acting on each subspace. Since $U_{i,v}$ is upper triangular by Lemma~\ref{lem:InvertibleTransf}, it preserves the Schubert cell $C_w$, so $\Phi(v,V_\bullet)\in C_w$, and (since $w$ is admissible) $F_k \subseteq \Phi(v,V_\bullet)_n$. Finally, since $U_{i,v}$ commutes with $N_T$ by Lemma~\ref{lem:InvertibleTransf}, $N_T$ preserves $\Phi(v, V_\bullet)$:
\begin{equation}
N_T\Phi(v,V_\bullet)_j = N_TU_{i,v}\Phi(0,V_\bullet)_j = U_{i,v}N_T\Phi(0,V_\bullet)_j \subseteq  U_{i,v}\Phi(0,V_\bullet)_{j-1} = \Phi(v,V_\bullet)_{j-1}.
\end{equation}
Hence, $\Phi(v, V_\bullet) \in C_w \cap Y_T$.

In order to show that $\Phi$ is an isomorphism, we show that $\Phi$ has an inverse. Define the linear map $\psi^{(i)} \coloneqq (\phi^{(i)})^t$, the transpose with respect to the ordered basis $\{f_j\}$. Explicitly, $\psi^{(i)}$ is the linear map
\begin{align}
    \psi^{(i)} : \bC^K \to \bC^{K-1} & \,\,\,\text{  for  }i\leq \ell(\la),\\
    \psi^{(i)} : \bC^K \to \bC^{K-s} & \,\,\,\text{  for  }i>\ell(\la)
\end{align}
defined by $\psi^{(i)}(f_j) \coloneqq f_{\fl_T^{(i)}(j)}$ if $j$ is in the domain of $\fl_T^{(i)}$ and $0$ otherwise.

Given $V^\prime_\bullet\in C_w\cap Y_T$, note that $\dim(V^\prime_1\cap F_{w(1)})=1$ and $\dim(V^\prime_1\cap F_{(w(1)-1)})=0$, so 
\begin{equation}
    V^\prime_1 = \vspan\{f_{w(1)} + v\} = U_{i,v} \vspan\{f_{w(1)}\}
\end{equation}
for some vector $v = \sum_{h=1}^{i-1} \alpha_h f_{T(h,\Lambda_h)}$, where $\alpha_h\in\mathbb{C}$ are some coefficients. Now note that $f_{w(1)}\in U^{-1}_{i,v}V^\prime_j$ for all $j$ and, in the case $i>\ell(\la)$, we have $V^\prime_j\subseteq \vspan\{f_{T(h,m)}\mid m>1 \mbox{ if } h\neq i\}$.
Hence,
\begin{equation}
U^{-1}_{i,v}V^\prime_j = \vspan\{f_{w(1)}\} + \phi^{(i)}(\psi^{(i)}(V^\prime_j)),
\end{equation}
and from this equality, a routine check shows that the inverse of $\Phi$ is given by
\begin{align}
    \Phi^{-1}(V^\prime_\bullet) = (v, (\psi^{(i)}U^{-1}_{i,v}(V'_2)\subseteq\cdots\subseteq\psi^{(i)}U^{-1}_{i,v}(V'_n))).
\end{align}
Note that the entries of $U_{i,v}$ are regular functions on $Y_T \cap C_w$ (see Section \ref{subsec:Flags}). Moreover, since $U_{i,v}$ is unipotent and upper triangular, the same is true of $U_{i,v}^{-1}$. Thus $\Phi$ and $\Phi^{-1}$ are algebraic maps, so $\Phi$ is an isomorphism of algebraic varieties.
\end{proof}

\begin{remark}[Combined isomorphism $\Phi$]\label{rmk:CombinedIso}
For a fixed $i$, the map $\Phi$ above works the same way for all $w$ such that $w(1) = i$; specifically, the map $U_{i,v}$ depends on $v$ and $i$ but not (the rest of) $w$. As such, all of these $\Phi$'s can be combined into a larger isomorphism. We let
\begin{equation}
    Y_T^i := \big\{V_\bullet \in Y_T : V_1 \subseteq \vspan\{f_{T(h,\Lambda_h)}\st h <i\}\big\} \subseteq Y_T.
\end{equation}
Then we obtain a combined isomorphism
\begin{equation}
    \Phi : \vspan\{f_{T(h,\Lambda_h)}\st h <i\} \times Y_{T^{(i)}} \to Y_T^i \setminus Y_T^{i-1}.
\end{equation}

By induction, $Y_T^i \setminus Y_T^{i-1}$ is a union of cells, as in the definition of affine paving \eqref{eq:affine-paving}. We give the precise characterization of cells in this affine paving below in Theorem \ref{thm:AffinePavingY} and Corollary \ref{cor:AffinePavingFillings}.
We do not use the notation $Y_T^i$ elsewhere in the paper, though we use a restriction of the combined map $\Phi$ in Section~\ref{sec:IrreducibleComponents}. 
\end{remark}

\begin{example}\label{ex:274813cell}
Let $n$, $\lambda$, $s$, and $T$ be as in Example~\ref{ex:6223unip}.  Let $w=2\, 7 \, 4\, 8\, 1\, 3$, which is admissible with respect to $T$. Then $i=w(1) = 2$. A flag $V_\bullet\in C_w\cap Y_T$ can be represented by a matrix
\begin{align} \label{eq:CoordinateExampleEq}
    \begin{bmatrix}
    a & c & d & g & 1 & 0 \\
    1 & 0 & 0 & 0 & 0 & 0 \\
    0 & ab& a & de & 0 & 1 \\
    0 & b & 1 & 0 & 0 & 0 \\
    0 & 0 & 0 & ae & 0 & 0 \\
    0 & 0 & 0 & e & 0 & 0 \\
    0 & 1 & 0 & 0 & 0 & 0 \\
    0 & 0 & 0 & 1 & 0 & 0 \\
    0 & 0 & 0 & 0 & 0 & 0 \\
    0 & 0 & 0 & 0 & 0 & 0 \\
    0 & 0 & 0 & 0 & 0 & 0 \\
    0 & 0 & 0 & 0 & 0 & 0
    \end{bmatrix},
\end{align}
where each $V_j$ is the span of the first $j$ columns, and $a$, $b$, $c$, $d$, $e$, and $g$ are arbitrary elements of $\mathbb{C}$. Identifying $\vspan\{f_{1}\}\cong \bC^1$, the map $\Phi^{-1}$ sends this matrix to the pair $(af_1,V_\bullet')$, where the flag $V'_\bullet$ is represented by
\begin{align}
    \begin{bmatrix}
    c & d & g & 1 & 0 \\
    0 & 0 & de & 0 & 1 \\
    b & 1 & 0 & 0 & 0 \\
    0 & 0 & 0 & 0 & 0 \\
    0 & 0 & e & 0 & 0 \\
    1 & 0 & 0 & 0 & 0 \\
    0 & 0 & 1 & 0 & 0 \\
    0 & 0 & 0 & 0 & 0 \\
    0 & 0 & 0 & 0 & 0 \\
    0 & 0 & 0 & 0 & 0 \\
    0 & 0 & 0 & 0 & 0
    \end{bmatrix}.
\end{align}
\end{example}

\begin{theorem}\label{thm:AffinePavingY}
If $T$ is Schubert compatible, then the intersections $C_w\cap Y_{n,\la,s,T}$ for $w$ admissible are the cells of an affine paving of $Y_{n,\la,s,T}$.
\end{theorem}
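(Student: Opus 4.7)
The plan is to combine the Schubert paving of the ambient partial flag variety with an induction on $n$, using Lemma~\ref{lem:CellRecursion} as the main engine. By the Schubert decomposition $\Fl_{(1^n)}(\bC^K) = \bigsqcup_w C_w$, intersecting with $Y_T$ gives a set-theoretic partition $Y_T = \bigsqcup_w (C_w \cap Y_T)$, and Lemma~\ref{lem:NonemptyIntersections} identifies the nonempty pieces precisely as those with $w$ admissible.

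The central step is to show that every nonempty $C_w \cap Y_T$ is isomorphic to an affine space, which I would do by induction on $n$. The base case $n=0$ is trivial since $Y_T$ is then a single point (see Remark~\ref{rmk:BaseCase}). For the inductive step, note that admissibility condition (A2) applied with $j=1$ forces $w(1) = T(i, \Lambda_i)$ for some $i \in [s]$: the cell containing $w(1)$ must be on the right edge, since no earlier $w(j)$'s exist to satisfy the implication in (A2). Then Lemma~\ref{lem:CellRecursion} yields
\begin{equation*}
    C_w \cap Y_T \;\cong\; \bC^{i-1} \times (C_{\fl_T^{(i)}(w)} \cap Y_{T^{(i)}}).
\end{equation*}
Since $T^{(i)}$ is Schubert-compatible with parameter $n-1$ in place of $n$, and since $\fl_T^{(i)}(w)$ is admissible with respect to $T^{(i)}$ (this is a direct unpacking of (A1) and (A2) under the flattening operation), the inductive hypothesis identifies the second factor with some $\bC^d$, giving $C_w \cap Y_T \cong \bC^{i-1+d}$.

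To assemble these cells into a filtration of the form required by \eqref{eq:affine-paving}, I would pull back the standard Schubert filtration on $\Fl_{(1^n)}(\bC^K)$. Choose any linear extension $w_1, \ldots, w_m$ of the partial order on injections $[n] \to [K]$ under which $\overline{C_w} = \bigsqcup_{v \leq w} C_v$, so that each $F_j := \bigsqcup_{i \leq j} C_{w_i}$ is closed in $\Fl_{(1^n)}(\bC^K)$. Then $F_j \cap Y_T$ is closed in $Y_T$, and
\begin{equation*}
    (F_j \cap Y_T) \setminus (F_{j-1} \cap Y_T) \;=\; C_{w_j} \cap Y_T,
\end{equation*}
which is either empty (when $w_j$ is not admissible) or an affine space by the previous step. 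Discarding the empty strata leaves a filtration by closed subvarieties of $Y_T$ whose successive differences are the affine cells $\{C_w \cap Y_T : w \text{ admissible}\}$, giving the desired affine paving.

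The only genuinely nontrivial input is the inductive step, and its core, Lemma~\ref{lem:CellRecursion}, is already established; the remainder is standard bookkeeping with the Schubert filtration. The minor point requiring care is confirming that admissibility is preserved under $w \mapsto \fl_T^{(i)}(w)$ so that the induction closes, which I would verify by unpacking the definitions of (A1), (A2), and of $T^{(i)}$.
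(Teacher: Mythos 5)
Your proof is correct and takes essentially the same approach as the paper: identify the nonempty intersections via Lemma~\ref{lem:NonemptyIntersections}, show each is an affine space by induction on $n$ using Lemma~\ref{lem:CellRecursion}, and assemble via the Schubert filtration of the ambient flag variety. You spell out the filtration assembly a bit more explicitly than the paper does, but the ideas are identical.
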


\begin{proof}
Since the Schubert cells $C_w$ are the cells of an affine paving of $\Fl_{(1^n)}(\bC^K)$, it suffices to show that each nonempty intersection $C_w\cap Y_{n,\la,s,T}$ is isomorphic to an affine space $\bC^d$ for some $d$. By Lemma~\ref{lem:CellRecursion}, $C_w\cap Y_{n,\la,s,T}$ is nonempty if and only if $w$ is admissible. We proceed by induction on $n$ to show that each of these intersections is an affine space.

In the base case when $n=0$, we have $n = k = 0$, $\lambda = \Lambda = \emptyset$ and $s > 0$ is arbitrary. Then $\bC^K = \bC^0$ and $Y_{n, \lambda, s, T} = \Fl_\emptyset(0) = \{\mathrm{pt}\}$. Thus, the only nonempty intersection is a single point.

In the inductive case where $n\geq 1$, we have
\begin{equation}
    C_w\cap Y_{T}\cong \mathbb{C}^{i-1}\times (C_{\fl_T^{(i)}(w)}\cap Y_{T^{(i)}})
\end{equation}
by Lemma~\ref{lem:CellRecursion}.  By induction, 
$C_{\fl_T^{(i)}(w)}\cap Y_{T^{(i)}}\cong \mathbb{C}^m$ for some $m$, so $C_w\cap Y_T \cong \bC^{i+m-1}$, and our induction is complete.
\end{proof}

\begin{definition}
An \textbf{injective partial row-decreasing filling} of $[\Lambda]$ is a filling of a subset of the cells of $[\Lambda]$ with the labels $1,2,\dots, n$ (without repeating any labels) such that the filled cells are right justified in their row, the labeling decreases along each row, and each cell of $[\lambda]$ is filled.

Given $w$ admissible with respect to $T$, let $\operatorname{IPRD}_T(w)$ be the injective partial row-decreasing filling of $[\Lambda]$ such that, for $1\leq i\leq n$, if $w(i) = T(a,b)$, then the cell $(a,b)$ of $[\Lambda]$ is labeled with $w(i)$.
\end{definition}

\begin{figure}[t]
  \centering
  \includegraphics[scale=0.4]{Figures/RowDecreasingBijection.pdf}
  \caption{With $T$ in reading order as shown on the left, an example of the bijective correspondence between admissible functions and injective partial row-decreasing fillings on the right.\label{fig:FillingBijectionExample}}
\end{figure}

Note that the data of an admissible $w$ with respect to $T$ is equivalent to the data of the corresponding IPRD.  See Figure~\ref{fig:FillingBijectionExample} for an example of this correspondence. Combining this with Theorem~\ref{thm:AffinePavingY}, we have the following corollary.

\begin{corollary}\label{cor:AffinePavingFillings}
The nonempty cells in the affine paving of Theorem~\ref{thm:AffinePavingY} are in bijection with injective partial row-decreasing fillings of $[\Lambda]$.
\end{corollary}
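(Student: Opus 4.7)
By Theorem~\ref{thm:AffinePavingY}, the nonempty cells of the affine paving are in bijection with admissible injective maps $w : [n] \to [K]$, so it suffices to exhibit a bijection between such $w$ and injective partial row-decreasing fillings of $[\Lambda]$. The plan is to use $\operatorname{IPRD}_T$ as defined in the paragraph before the corollary, verify that it produces a valid IPRD, and construct an explicit inverse.

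First I would take an admissible $w$ and check the three conditions in the definition of IPRD directly for $F := \operatorname{IPRD}_T(w)$. Condition (A1) says $[k] \subseteq w([n])$, so combined with condition (S2), which says $T$ restricts to a bijection from $[\lambda]$ to $[k]$, every cell of $[\lambda]$ lies in the preimage of $w([n])$ under $T$, i.e.\ every cell of $[\lambda]$ is filled. Right-justification in each row follows from (A2): if cell $(a,b)$ is filled and not on the right edge of $[\Lambda]$, then by (A2) the label $T(a,b+1)$ also lies in $w([n])$, so cell $(a,b+1)$ is filled as well. For the decreasing condition, I would read (A2) as telling us that if cell $(a,b)$ carries the label (index) $i$ with $w(i) = T(a,b)$ and $(a,b)$ is not on the right edge, then the label at $(a,b+1)$ equals some $i' < i$; this is exactly the statement that $F$ decreases from left to right along each row.

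Next I would construct the inverse. Given an IPRD $F$ of $[\Lambda]$, define $w = w_F : [n] \to [K]$ by $w(i) := T(a,b)$, where $(a,b)$ is the unique cell of $F$ carrying the label $i$. Since the labels of $F$ are a permutation of $[n]$ and $T$ is injective by (S1), $w$ is a well-defined injective map. Admissibility of $w$ is then verified by running the previous arguments in reverse: condition (A1) follows from the requirement that every cell of $[\lambda]$ be filled together with (S2), and condition (A2) follows from the right-justification of the filled cells (ensuring cell $(a,b+1)$ is filled when $(a,b)$ is filled and not on the right edge) combined with the row-decreasing property (giving $i' < i$ for the labels).

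Finally I would check that the two assignments $w \mapsto \operatorname{IPRD}_T(w)$ and $F \mapsto w_F$ are mutually inverse: both compositions just unwind the definition of placing $i$ in the cell $(a,b) = T^{-1}(w(i))$, so the verification is essentially tautological. I do not expect any genuine obstacle in this argument; it is a careful bookkeeping check translating between an injection $w : [n] \to [K]$ and the labeling of the cells $T^{-1}(w([n])) \subseteq [\Lambda]$, with conditions (A1)--(A2) on $w$ corresponding term by term to the three defining conditions of an IPRD.
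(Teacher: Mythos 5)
Your proposal is correct and takes exactly the same approach as the paper, which simply asserts that the data of an admissible $w$ is equivalent to the data of $\operatorname{IPRD}_T(w)$ and leaves the verification to the reader; you supply the details, correctly matching (A1) with the ``every cell of $[\lambda]$ is filled'' condition via (S2), and matching (A2) with right-justification and row-decrease. (Note that you correctly read the paper's definition of $\operatorname{IPRD}_T(w)$ as placing the label $i$, rather than $w(i)$, in cell $T^{-1}(w(i))$; otherwise the labels would not lie in $[n]$ as the definition of IPRD requires.)
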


\begin{remark}
  Recall that in the case where $\la=(1^k)$ and $s=k$, the ring $R_{n,\la,s}$ specializes to the Haglund--Rhoades--Shimozono ring, $R_{n,(1^k),k} \cong R_{n,k}$. This ring has a $\bZ$-basis indexed by \textbf{ordered set partitions}, which are partitions of the set $[n]$ into a $k$-tuple of nonempty blocks $(B_1,\dots, B_k)$. Part of the motivation that led us to define the variety $Y_{n,\la,s}$ was the following bijection between cells of $Y_{n,(1^k),k}$ and ordered set partitions. Fix a Schubert compatible $T$.  Map $w$ to the ordered set partition $(B_1,\dots, B_k)$ where block $B_i$ is defined to be the set of labels in the $i$-th row of $\operatorname{IPRD}_T(w)$. It can be checked that this map is indeed a bijection. Hence, the total rank of $H^*(Y_{n,(1^k),k})$ is equal to the total rank of $R_{n,k}$.

  For example, let $n=6$ and $k=3$, and let $T$ be the Schubert-compatible filling of $\Lambda(6,(1^3),3)$ according to reading order, so
  \[
    \includegraphics[scale=0.45]{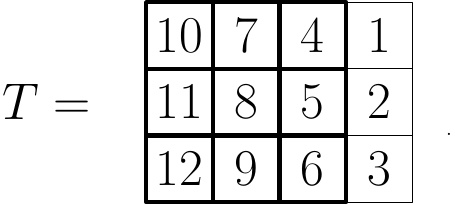}.
  \]
  Then the image of $w=253618$ under this correspondence is
  \[
    \includegraphics[scale=0.45]{Figures/OSPBijection.pdf} 
  \] 
\end{remark}

Recall our convention that $R_{n,\la,s}$  is generated in degree $2$ by the $x_i$ variables. We have the following identification of the Hilbert--Poincar\'e series of the cohomology ring of $Y_{n,\la,s}$ with that of $R_{n,\la,s}$.

\begin{theorem}\label{thm:RankGenNLaS}
The cohomology ring $H^*(Y_{n,\la,s,T})$ is a graded free $\bZ$-module concentrated in even degrees, and
\begin{align}\label{eq:HilbertEquality}
    \Hilb(H^*(Y_{n,\la,s,T});q) = \Hilb(R_{n,\la,s};q).
\end{align}
\end{theorem}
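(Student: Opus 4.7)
The plan is to combine the affine paving from Theorem~\ref{thm:AffinePavingY} with the recursion in Lemma~\ref{lem:RHilbRecursion} via induction on $n$. First, since $Y_{n,\la,s,T}$ is a closed subvariety of the compact variety $\Fl_{(1^n)}(\bC^K)$, it is compact, and by Theorem~\ref{thm:AffinePavingY} it admits an affine paving whose cells are the nonempty intersections $C_w \cap Y_T$. Applying Lemma~\ref{lem:OddCohVanishes} to this paving immediately yields that $H^*(Y_{n,\la,s,T})$ is a free $\bZ$-module concentrated in even degrees, with
\[
\Hilb(H^*(Y_{n,\la,s,T});q) = \sum_{w \text{ admissible}} q^{2\dim_\bC(C_w \cap Y_T)}.
\]

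Next, I will verify that the right-hand side satisfies the same recursion as $\Hilb(R_{n,\la,s};q)$ in Lemma~\ref{lem:RHilbRecursion}. Every admissible $w$ has $w(1) = T(i,\Lambda_i)$ for a unique $i$ with $1 \leq i \leq s$ (since, by (A2), $w(1)$ must lie on the right edge of $[\Lambda]$, and admissibility forces $i \leq \ell(\la)$ only when the first cell chosen has no cell of $[\la]$ to its right). By Lemma~\ref{lem:CellRecursion}, for each such $w$,
\[
C_w \cap Y_T \cong \bC^{i-1} \times \bigl(C_{\fl_T^{(i)}(w)} \cap Y_{T^{(i)}}\bigr),
\]
so the cells of $Y_T$ with $w(1) = T(i,\Lambda_i)$ contribute a factor of $q^{2(i-1)}$ times the Hilbert series of $H^*(Y_{T^{(i)}})$. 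Grouping admissible $w$ by this value of $i$ and noting that $T^{(i)}$ is Schubert-compatible of type $(n-1,\la^{(i)},s)$ when $i \leq \ell(\la)$ and of type $(n-1,\la,s)$ when $i > \ell(\la)$, we obtain
\[
\Hilb(H^*(Y_{n,\la,s,T});q) = \sum_{i=1}^{\ell(\la)} q^{2(i-1)} \Hilb(H^*(Y_{n-1,\la^{(i)},s,T^{(i)}});q) + \sum_{i=\ell(\la)+1}^{s} q^{2(i-1)} \Hilb(H^*(Y_{n-1,\la,s,T^{(i)}});q).
\]

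Finally, I induct on $n$. The base case $n=0$ is handled by Remark~\ref{rmk:BaseCase}: both $H^*(Y_{0,\emptyset,s})$ and $R_{0,\emptyset,s}$ equal $\bZ$, so both Hilbert series are $1$. For the inductive step, the recursion above matches the recursion in Lemma~\ref{lem:RHilbRecursion} term by term, so \eqref{eq:HilbertEquality} follows. The only technical point worth checking carefully is that every $w(1)$ for an admissible $w$ indeed equals $T(i,\Lambda_i)$ for some $i \geq 1$ (so that the isomorphism of Lemma~\ref{lem:CellRecursion} applies): this is immediate from admissibility condition (A2) applied to $i=1$, which forces $w(1)$ to be a label at a right-edge cell of $[\Lambda]$, combined with the Schubert-compatibility condition (S5) ensuring these labels are exactly $T(1,\Lambda_1),\ldots,T(s,\Lambda_s)$. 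The main (minor) obstacle is this bookkeeping: confirming that grouping by $i = i(w)$ together with the isomorphism of Lemma~\ref{lem:CellRecursion} reproduces exactly the sum in Lemma~\ref{lem:RHilbRecursion}, including the correct split between $i \leq \ell(\la)$ and $i > \ell(\la)$.
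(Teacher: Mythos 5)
Your proposal is correct and follows essentially the same route as the paper: invoke Theorem~\ref{thm:AffinePavingY} together with Lemma~\ref{lem:OddCohVanishes} to identify the Hilbert--Poincar\'e series of $H^*(Y_{n,\la,s,T})$ with $\sum_w q^{2\dim_\bC(C_w\cap Y_T)}$, then group admissible $w$ by $w(1)=T(i,\Lambda_i)$ and apply Lemma~\ref{lem:CellRecursion} to reproduce the recursion of Lemma~\ref{lem:RHilbRecursion}, closing the induction with the $n=0$ base case. The one step you implicitly use (and the paper states as ``one can easily see'') is that $\fl_T^{(i)}$ gives a bijection between admissible $w$ with $w(1)=T(i,\Lambda_i)$ and maps admissible with respect to $T^{(i)}$; also, your parenthetical explanation for why $w(1)$ lies on the right edge is correct from (A2), though its second clause is unnecessary and somewhat muddled.
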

\begin{proof}
By Theorem~\ref{thm:AffinePavingY}, $Y_{n,\la,s,T}$ has a paving by affines where each cell is a copy of complex affine space. By Lemma~\ref{lem:OddCohVanishes}, all of the odd cohomology groups vanish, and $H^{2m}(Y_{n,\la,s,T})$ is a free $\bZ$-module of rank equal to the number of cells of complex dimension $m$ in the paving.

We now prove \eqref{eq:HilbertEquality} holds by induction on $n$. In the base case when $n=0$, then $\lambda=\emptyset$, $k=0$, and $T$ is the empty filling. In this case, $Y_{0,\emptyset,s,T}$ is a single point by Remark~\ref{rmk:BaseCase}, and $I_{0,\emptyset,s} = \langle 0\rangle$. Hence, $H^*(Y_{0,\emptyset,s,T})$ and $R_{0,\emptyset,s}$ are both $\bZ$, the trivial $\bZ$-algebra, and thus have the same Hilbert--Poincar\'e series.

Now suppose $n\geq 1$. By Lemma~\ref{lem:RHilbRecursion},
\begin{equation}
\Hilb(R_{n,\la,s};q) = \sum_{i=1}^{\ell(\la)} q^{2(i-1)} \Hilb(R_{n-1,\la^{(i)},s};q) + \sum_{i=\ell(\la)+1}^s q^{2(i-1)} \Hilb(R_{n-1,\la,s};q).
\end{equation}
We need to show that $\Hilb(H^*(Y_{n,\la,s});q)$ satisfies the same recursion.  

By Lemma~\ref{lem:OddCohVanishes}, whenever $T$ is a $(n,\la, s)$-Schubert compatible filling, the $q^{2m}$ coefficient of $\Hilb(H^*(Y_{n,\la,s});q)=\Hilb(H^*(Y_T);q)$ is the number of admissible $w$ such that the cell $C_w\cap Y_T$ has complex dimension $m$.  Hence,
\begin{equation}
\Hilb(H^*(Y_{n,\la,s});q) = \sum_w q^{2\dim(C_w\cap Y_T)},
\end{equation}
where the sum is over all admissible $w$.

Given a fixed $i$ with $1\leq i\leq s$, one can easily see that the map $\fl_T^{(i)}$ gives a bijection between injective maps $w$ with $w(1)=T(i,\Lambda_i)$ that are admissible with respect to $T$ and injective maps that are admissible with respect to $T^{(i)}$.  Hence, by Lemma~\ref{lem:CellRecursion}, we have
\begin{align}
\Hilb(H^*(Y_{n,\la,s});q)
&= \sum_w q^{2\dim(C_w\cap Y_T)} \\
&= \sum_{i=1}^s \sum_{w: w(1)=T(i,\Lambda_i)} q^{2\dim(C_w\cap Y_T)} \\
&= \sum_{i=1}^s \sum_{w: w(1)=T(i,\Lambda_i)} q^{2(i-1)+2\dim(C_{\fl_T^{(i)}(w)}\cap Y_{T^{(i)}})} \\
  &= \sum_{i=1}^s q^{2(i-1)} \Hilb(H^*(Y_{T^{(i)}});q).\label{eq:final-line-recursion}
\end{align}
Finally, we can split \eqref{eq:final-line-recursion} into
\[
  \sum_{i=1}^{\ell(\la)} q^{2(i-1)} \Hilb(H^*(Y_{n-1,\la^{(i)},s});q)
  + \!\sum_{i=\ell(\la)+1}^s q^{2(i-1)} \Hilb(H^*(Y_{n-1,\la,s};q)).
\]
This is the desired recursion, so the proof is complete.
\end{proof}

\begin{remark}
A detailed analysis of the recursion in Lemma \ref{lem:CellRecursion} can be used to compute the dimension of $C_w \cap Y_T$ in terms of an \emph{inversion statistic} depending on $T$ on the partial filling corresponding to $w$, when $w$ is admissible. Since we do not need this fact, we omit the proof. Below, we give a formula for the dimension of $Y_{n,\lambda,s}$ as a corollary of Theorem~\ref{thm:RankGenNLaS}.
\end{remark}

\begin{corollary}\label{cor:YDimension}
The dimension of $Y_{n,\la,s}$ is $n(\la) + (s-1)(n-k)$.
\end{corollary}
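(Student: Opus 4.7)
The plan is to deduce the dimension from the Hilbert--Poincar\'e series identification in Theorem~\ref{thm:RankGenNLaS}. By Theorem~\ref{thm:AffinePavingY}, $Y_{n,\la,s}$ admits an affine paving whose cells are complex affine spaces; consequently its complex dimension equals the maximum dimension of any cell, which by Lemma~\ref{lem:OddCohVanishes} is half the top nonzero degree of $H^*(Y_{n,\la,s})$ (recalling our convention that each $x_i$ has cohomological degree $2$). Combined with Theorem~\ref{thm:RankGenNLaS},
\begin{equation}
\dim(Y_{n,\la,s}) = \tfrac{1}{2}\deg_q \Hilb(R_{n,\la,s};q),
\end{equation}
so it suffices to show that the top $q$-degree of $\Hilb(R_{n,\la,s};q)$ equals $2\bigl[n(\la) + (s-1)(n-k)\bigr]$.

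I would proceed by induction on $n$ using the recursion in Lemma~\ref{lem:RHilbRecursion}. The base case $n=0$ (which forces $\la=\emptyset$) is immediate since $R_{0,\emptyset,s} = \bZ$ has top $q$-degree $0$. For the inductive step, let $D(n,\la,s)$ denote the top $q$-degree of $\Hilb(R_{n,\la,s};q)$. The recursion expresses $D(n,\la,s)$ as
\begin{equation}
\max\!\left(\max_{1 \leq i \leq \ell(\la)}\bigl[2(i-1) + D(n-1,\la^{(i)},s)\bigr],\ \max_{\ell(\la) < i \leq s}\bigl[2(i-1) + D(n-1,\la,s)\bigr]\right).
\end{equation}
By the inductive hypothesis, using $|\la^{(i)}|=k-1$, we have $D(n-1,\la^{(i)},s)/2 = n(\la^{(i)}) + (s-1)(n-k)$ and $D(n-1,\la,s)/2 = n(\la) + (s-1)(n-1-k)$.

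The second maximum (if $s > \ell(\la)$) is attained at $i = s$, yielding $2(s-1) + 2[n(\la) + (s-1)(n-1-k)] = 2[n(\la) + (s-1)(n-k)]$. For the first maximum, the key combinatorial identity is
\begin{equation}
n(\la) - n(\la^{(i)}) = \binom{\la'_{\la_i}}{2} - \binom{\la'_{\la_i}-1}{2} = \la'_{\la_i} - 1 \geq i - 1,
\end{equation}
obtained by observing that passing from $\la$ to $\la^{(i)}$ (after re-sorting) shortens column $\la_i$ of $\la'$ by one unit, and that column $\la_i$ has height at least $i$. Hence $(i-1) + n(\la^{(i)}) \leq n(\la)$, with equality precisely when $(i,\la_i)$ is a corner of $[\la]$; the choice $i=\ell(\la)$ always realizes such a corner, giving $2(\ell(\la)-1) + D(n-1,\la^{(\ell(\la))},s) = 2[n(\la) + (s-1)(n-k)]$. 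Whether $s=\ell(\la)$ or $s > \ell(\la)$, both maxima equal $2[n(\la) + (s-1)(n-k)]$, completing the induction.

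The main obstacle is the bookkeeping around the combinatorial identity for $n(\la) - n(\la^{(i)})$ and verifying that neither smaller values of $i$ in the second sum nor non-corner choices of $i$ in the first sum exceed the claimed maximum; both follow directly from the inequality $\la'_{\la_i} \geq i$ above.
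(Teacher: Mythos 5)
Your overall strategy matches the paper's: identify $\dim Y_{n,\la,s}$ with half the top $q$-degree of $\Hilb(H^*(Y_{n,\la,s});q)$, then invoke Theorem~\ref{thm:RankGenNLaS} to replace this with the top $q$-degree of $\Hilb(R_{n,\la,s};q)$. The difference is in the final step: the paper simply cites \cite{GriffinThesis,Rhoades-Yu-Zhao} for the fact that this degree equals $2\bigl[n(\la) + (s-1)(n-k)\bigr]$, while you re-derive it inductively from the recursion in Lemma~\ref{lem:RHilbRecursion}. Your inductive argument is correct: since all coefficients in $\Hilb(R_{n,\la,s};q)$ are nonnegative, the top degree of the sum is the maximum of the top degrees of the summands; the identity $n(\la) - n(\la^{(i)}) = \la'_{\la_i} - 1$ follows from $\binom{m}{2} - \binom{m-1}{2} = m-1$ and the observation that re-sorting $(\la_1,\dots,\la_i-1,\dots,\la_{\ell})$ removes a box from column $\la_i$; and $\la'_{\la_i}\geq i$ because each of rows $1,\dots,i$ has length $\geq \la_i$. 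This makes the corollary self-contained, at the cost of some bookkeeping the paper avoids by citation. One minor imprecision in your last sentence: when $\la = \emptyset$ the first maximum is over an empty index set and only the second contributes (with $s\geq 1 > \ell(\la) = 0$), while when $s = \ell(\la) \geq 1$ the second maximum is over an empty set; so it is better to say the combined maximum equals the claimed value, noting that whichever of the two inner maxima is nonempty attains it.
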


\begin{proof}
The dimension of $Y_{n,\la,s}$ is equal to the dimension of a maximal
dimensional cell in an affine paving of $Y_{n,\la,s}$, which is half
the degree of the Hilbert series $\Hilb(H^*(Y_{n,\la,s});q)$. By
Theorem~\ref{thm:RankGenNLaS}, this is exactly half the degree of $\Hilb(R_{n,\la,s};q)$, which is known to be $n(\la) + (s-1)(n-k)$; see~\cite{GriffinThesis,Rhoades-Yu-Zhao}.  (Recall again our convention that $R_{n,\la,s}$ is generated in degree $2$).
\end{proof}

\section{The iterated projective bundle \texorpdfstring{$Y_{n,\emptyset,s}$}{Yn0s}}\label{sec:EmptyPartition}

In this section, we analyze the variety $Y_{n,\lambda,s}$ in the case when $\lambda$ is the empty partition $\emptyset$. We prove that this space is an iterated projective bundle in Lemma~\ref{lem:ProjectiveBundle}. We then prove that $Y_{n,\emptyset,s}$ has the same cohomology ring as $(\bP^{s-1})^n$ in Lemma~\ref{lem:CohEmptyPartition}.  Furthermore, we show that there is a closed embedding of $Y_{n,\la,s}$ in $Y_{n,\emptyset,s}$ that induces a surjective map on cohomology, giving some of our desired relations for $H^*(Y_{n,\la,s})$.

For all $i\leq n$, let $\widetilde V_i$ be the tautological rank $i$ vector bundle on $\Fl_{(1^n)}(\bC^{ns})$ for $i\leq n$. We abuse notation and also denote by $\widetilde V_i$ the restriction of $\widetilde V_i$ to the subvariety $Y_{n,\emptyset,s}$. 

\begin{lemma}\label{lem:ProjectiveBundle}
Let $T$ be a $(n,\emptyset,s)$-Schubert-compatible filling such that the labels in the first column are $n(s-1)+1,\dots, ns$ in some order, and let $T'$ be the result of deleting the first column of $T$. Then the map
\begin{align}\label{eq:ForgettingMap}
Y_{n,\emptyset,s,T} \rightarrow Y_{n-1,\emptyset,s,T'}
\end{align}
given by forgetting the last subspace in the partial flag is a $\bP^{s-1}$-bundle map.
\end{lemma}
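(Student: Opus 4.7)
The plan is to realize the forgetful map as the projectivization of a rank-$s$ vector bundle on the target. First I would unwind the setup: for $\lambda = \emptyset$ we have $\Lambda = (n^s)$ and $K = ns$, so $N_T$ is nilpotent with $s$ Jordan blocks each of size $n$; the condition $N_\Lambda^{n-k}\bC^K \subseteq V_n$ becomes $N_T^n \bC^{ns} = 0 \subseteq V_n$, which is automatic. Let $W_0 \subseteq \bC^{ns}$ be the span of those standard basis vectors $f_a$ indexed by cells not in column 1 of $T$. The hypothesis on the column 1 labels forces $W_0 = \vspan\{f_1,\ldots,f_{(n-1)s}\}$, giving a natural identification $W_0 \cong \bC^{(n-1)s}$ compatible with the ambient space of $Y_{n-1,\emptyset,s,T'}$. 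A direct check using the rectangular shape shows $W_0 = \ker(N_T^{n-1}) = \im(N_T)$ and that $N_T|_{W_0} = N_{T'}$ under the identification. Iterating $N_TV_i \subseteq V_{i-1}$ gives $N_T^{n-1}V_{n-1} = 0$, so $V_{n-1} \subseteq W_0$, and the forgetful map lands in $Y_{n-1,\emptyset,s,T'}$.

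Next I would analyze the fibers pointwise. Over $V_\bullet' = (V_1,\ldots,V_{n-1})$, the fiber parametrizes $n$-dimensional $V_n \supset V_{n-1}$ with $N_TV_n \subseteq V_{n-1}$, equivalent to choosing a line in $N_T^{-1}(V_{n-1})/V_{n-1}$. Since $V_{n-1} \subseteq \im(N_T)$, there is a short exact sequence
\[
    0 \to \ker(N_T) \to N_T^{-1}(V_{n-1}) \to V_{n-1} \to 0,
\]
yielding $\dim N_T^{-1}(V_{n-1}) = s + (n-1)$ and $\dim\bigl(N_T^{-1}(V_{n-1})/V_{n-1}\bigr) = s$, independent of $V_\bullet'$. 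So each fiber is a $\bP^{s-1}$.

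To upgrade this to a bundle structure, I would globalize the construction. Let $\widetilde V_{n-1}$ denote the tautological rank-$(n-1)$ subbundle of the trivial rank-$ns$ bundle on $Y_{n-1,\emptyset,s,T'}$ (via $W_0 \hookrightarrow \bC^{ns}$), and define
\[
    \mathcal{E} \coloneqq N_T^{-1}(\widetilde V_{n-1})\,/\,\widetilde V_{n-1},
\]
where the numerator is the kernel subbundle of the composition $\underline{\bC^{ns}} \xrightarrow{N_T} \underline{\bC^{ns}} \twoheadrightarrow \underline{\bC^{ns}}/\widetilde V_{n-1}$. The constant-rank computation of the previous paragraph ensures that $\mathcal{E}$ is a genuine rank-$s$ vector bundle. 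The assignment $(V_1,\ldots,V_n) \mapsto (V_\bullet',\,V_n/V_{n-1})$ defines an algebraic morphism $Y_{n,\emptyset,s,T} \to \bP(\mathcal{E})$ over $Y_{n-1,\emptyset,s,T'}$, whose inverse recovers $V_n$ from a line $\ell \subseteq N_T^{-1}(V_{n-1})/V_{n-1}$ by taking its preimage inside $N_T^{-1}(V_{n-1})$. These mutually inverse algebraic maps exhibit the forgetful map as a $\bP^{s-1}$-bundle.

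The main obstacle I foresee is the constant-rank step, which rests on the identity $\ker(N_T^{n-1}) = \im(N_T)$ special to the rectangular Jordan type $\Lambda = (n^s)$; for general $\lambda$ this fails and the analogous forgetful map is not a bundle. The labeling hypothesis on column 1 is precisely what makes $W_0$ agree with the ambient space of $Y_{n-1,\emptyset,s,T'}$, so that the bundle construction descends cleanly. The remaining details reduce to verification that the constructed maps are algebraic, which is routine.
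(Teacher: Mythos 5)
Your proof is correct and follows essentially the same path as the paper's: establish $V_{n-1}\subseteq\ker(N_T^{n-1})=\im(N_T)$ (the span of basis vectors outside column~1) so the forgetful map is well defined, show each fiber is $\bP(N_T^{-1}(V_{n-1})/V_{n-1})\cong\bP^{s-1}$ via the dimension count with kernel $\ker(N_T)$ of dimension $s$, and globalize to the projectivization of the quotient bundle $\widetilde{N_T^{-1}V}_{n-1}/\widetilde V_{n-1}$. Incidentally, your index $(n-1)s$ for $\dim W_0$ is correct: column~1 of $[\Lambda]$ with $\Lambda=(n^s)$ has $\Lambda_1'=s$ cells, so the stated hypothesis in the lemma (labels $n(s-1)+1,\dots,ns$, which is $n$ labels, and the paper's $F_{n(s-1)}$) transposes the roles of $n$ and $s$ and should read $s(n-1)+1,\dots,sn$ and $F_{s(n-1)}$.
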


\begin{proof}
Given any $V_\bullet\in Y_{n,\emptyset,s,T}$, then $N_T^{n-k-1}V_{n-1} = 0$, so by our assumption on $T$ we have 
\begin{align}\label{eq:VContainedInFSpan}
V_{n-1} \subseteq \ker(N_T^{n-k-1})  = F_{n(s-1)}.
\end{align}
Furthermore, by our assumption on $T$, the nilpotent transformation $N_{T'}$ is the restriction of $N_T$ to $F_{n(s-1)}\subseteq \bC^{ns}$.  Therefore, $(V_1,\dots, V_{n-1})\in Y_{n-1,\emptyset,s,T'}$, so the map \eqref{eq:ForgettingMap} is well defined.

Given a subspace $V\subseteq \bC^{ns}$, let $N_T^{-1}(V)$ be the preimage of $V$ under the map $N_T : \bC^{ns}\to \bC^{ns}$. Observe that given $(V_1,\dots, V_{n-1})\in Y_{n-1,\emptyset,s,T'}$, an extension of this partial flag to $(V_1,\dots, V_{n-1},W)\in \Fl_{(1^n)}(\bC^K)$ is in $Y_{n,\emptyset,s}$ if and only if $W\subseteq N_T^{-1}(V_{n-1})$. We claim that for any subspace $V\subseteq F_{n(s-1)}$ of dimension $n-1$, 
\begin{align}\label{eq:DimOfInvImage}
\dim_\bC (N_T^{-1}(V)) = s+n-1.
\end{align}
Indeed, define a linear map
\begin{align}
\varphi = N_T|_{N_T^{-1}(V)} : N_T^{-1}(V)\to V,
\end{align}
which is the restriction of $N_T$. It is clear that this map is surjective, so \eqref{eq:DimOfInvImage} follows by rank-nullity and the fact that $\dim(\ker(N_T)) = s$.

Let $\widetilde{N_T^{-1}V}_{n-1}$ be the rank $s+n-1$ vector bundle on $Y_{n-1,\emptyset,s,T'}$ whose fiber over $V_\bullet$ is $N_T^{-1}(V_{n-1})$, and let $\widetilde V_{n-1}$ be the rank $n-1$ tautological vector bundle on $Y_{n-1,\emptyset,s,T'}$. We have an isomorphism
\begin{align}\label{eq:IsoOfProjBdl}
Y_{n,\emptyset,s,T} \cong \bP(\widetilde{N_T^{-1}V}_{n-1}/\widetilde V_{n-1})
\end{align}
defined by sending $V_\bullet$ to the line $V_n/V_{n-1}$ over the point $(V_1,\dots, V_{n-1})$ of $Y_{n-1,\emptyset,s,T'}$. Hence, $Y_{n,\emptyset,s,T}$ is a $\bP^{s-1}$-bundle over $Y_{n-1,\emptyset,s,T'}$ via the forgetting map~\eqref{eq:ForgettingMap}.
\end{proof}

We note that the variety $Y_{n,\emptyset,s}$ is a special case of a Steinberg variety, as defined in~\cite{Borho-MacPherson,Precup-Tymoczko-Parabolic}. Its cohomology ring is known~\cite{Borho-MacPherson} to be isomorphic to the ring of $(S_1\times \cdots\times S_1 \times S_{n(s-1)})$-invariants of the cohomology ring of the Springer fiber $H^*(\mathcal{B}^{\Lambda})$. It is not hard to prove the next lemma using this fact, but we instead give a self-contained proof for the sake of completeness.

\begin{lemma}\label{lem:CohEmptyPartition}
There is an isomorphism
\begin{align}
H^*(Y_{n,\emptyset,s})\cong \frac{\bZ[x_1,\dots, x_n]}{\langle x_1^s,\dots, x_n^s\rangle}
\end{align}
that identifies $x_i$ with $-c_1(\widetilde V_i/\widetilde V_{i-1})$
\end{lemma}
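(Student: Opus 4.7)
The plan is to proceed by induction on $n$, using the iterated projective bundle structure from Lemma~\ref{lem:ProjectiveBundle} together with the Leray--Hirsch theorem (the projective bundle formula). The base case $n=0$ is trivial, as $Y_{0,\emptyset,s}$ is a single point and the target ring is $\bZ$. For $n \geq 1$, Lemma~\ref{lem:ProjectiveBundle} realizes $Y_{n,\emptyset,s,T} \cong \bP(E)$ over $Y_{n-1,\emptyset,s,T'}$, where $E = \widetilde{N_T^{-1}V}_{n-1}/\widetilde V_{n-1}$ is a rank-$s$ bundle.

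The central computation is to show that $c(E) = 1$, i.e., that all positive-degree Chern classes of $E$ vanish. For this I would combine two short exact sequences of bundles on $Y_{n-1,\emptyset,s,T'}$:
\begin{align*}
0 \to \ker(N_T) \to \widetilde{N_T^{-1}V}_{n-1} \to \widetilde V_{n-1} \to 0, \\
0 \to \widetilde V_{n-1} \to \widetilde{N_T^{-1}V}_{n-1} \to E \to 0.
\end{align*}
The first sequence comes from the fiberwise surjectivity of $N_T \colon N_T^{-1}(V_{n-1}) \to V_{n-1}$ established inside the proof of Lemma~\ref{lem:ProjectiveBundle}. Since $N_T$ is a fixed linear endomorphism of $\bC^{ns}$, its kernel is a fixed subspace, so $\ker(N_T)$ is a trivial rank-$s$ subbundle of the trivial bundle $Y_{n-1,\emptyset,s,T'} \times \bC^{ns}$. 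By the triviality and additivity axioms for Chern classes, the first sequence gives $c(\widetilde{N_T^{-1}V}_{n-1}) = c(\widetilde V_{n-1})$, and the second then forces $c(E) = 1$ because $c(\widetilde V_{n-1})$ is a unit in the cohomology ring.

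Feeding $c_i(E) = 0$ for $i \geq 1$ into the projective bundle formula gives
$$H^*(Y_{n,\emptyset,s,T}) \cong H^*(Y_{n-1,\emptyset,s,T'})[\xi]/\langle \xi^s\rangle,$$
where $\xi = -c_1(\mathcal{O}_E(-1))$. Under the isomorphism \eqref{eq:IsoOfProjBdl}, the fiber of $\mathcal{O}_E(-1)$ at a point $V_\bullet \in \bP(E)$ is the line $V_n/V_{n-1}$, so $\mathcal{O}_E(-1)$ is canonically identified with $\widetilde V_n/\widetilde V_{n-1}$. Hence $\xi = -c_1(\widetilde V_n/\widetilde V_{n-1})$ corresponds to the variable $x_n$, and the inductive hypothesis completes the argument.

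The Leray--Hirsch step works cleanly over $\bZ$ because Theorem~\ref{thm:RankGenNLaS} already guarantees that $H^*(Y_{n,\emptyset,s})$ is a free $\bZ$-module concentrated in even degrees, and the cohomology of each projective fiber $\bP^{s-1}$ is generated by powers of $\xi$ restricted to that fiber. The main obstacle is therefore just the Chern class calculation; once the two exact sequences are in hand, the trivialization of $\ker(N_T)$ (which requires the specific choice of $T$ made in Lemma~\ref{lem:ProjectiveBundle}) is the only subtlety.
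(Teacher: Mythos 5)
Your argument is correct and matches the paper's approach: induct on $n$ via the projective bundle of Lemma~\ref{lem:ProjectiveBundle}, show $c(E)=1$ using short exact sequences of bundles and the triviality of a constant subbundle of $\bC^{ns}$, then apply the projective bundle formula. Your route to $c(E)=1$ (two sequences built from the trivial bundle $\ker(N_T)$) is a cosmetic variant of the paper's single sequence $0\to E\to \bC^{ns}/\widetilde V_{n-1}\to \im(N_T)/\widetilde V_{n-1}\to 0$, which instead exploits the triviality of $\im(N_T)$; also note that the projective bundle formula holds over $\bZ$ for any $\bP^{s-1}$-bundle, so your invocation of Theorem~\ref{thm:RankGenNLaS} to justify the Leray--Hirsch step is unnecessary.
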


\begin{proof}
We may assume, without loss of generality, that the hypotheses in
Lemma \ref{lem:ProjectiveBundle} continue to hold. We proceed by induction on $n$. In the case $n=1$, the lemma holds since $Y_{1,\emptyset,s,T} = \bP^{s-1}$.
Suppose by way of induction that 
\begin{align}
H^*(Y_{n-1,\emptyset,s,T'}) \cong \frac{\bZ[x_1,\dots, x_{n-1}]}{\langle x_1^s,\dots, x_{n-1}^s\rangle}.
\end{align}
Let us denote by $E$ the line bundle $\widetilde{N^{-1}V}_{n-1}/\widetilde V_{n-1}$.
By~\eqref{eq:IsoOfProjBdl}, we have an isomorphism
\begin{align}
Y_{n,\emptyset,s,T} \cong \bP(E),
\end{align}
so that $\widetilde V_n/\widetilde V_{n-1} \cong \mathcal{O}_E(1)$.
Hence, by Grothendieck's construction of Chern classes, we have
\begin{align}
H^*(Y_{n,\emptyset,s,T}) \cong \frac{H^*(Y_{n-1,\emptyset,s,T'})[x_n]}{\langle x_n^s + c_1(E)x_n^{s-1} + \cdots + c_s(E)\rangle}.
\end{align}

It suffices to prove $c(E) = 1$.
Indeed, observe that if $V_\bullet\in Y_{n-1,\emptyset,s,T'}$, then $V_{n-1}\subseteq F_{n(s-1)} = \im(N_T)$. Let $\bC^{ns}$ and $\im(N_T)$ be the corresponding trivial vector bundles on $Y_{n-1,\emptyset,s,T'}$. Consider the following short exact sequence of vector bundles,
\begin{align}
0 \to E\to \bC^{ns}/\widetilde V_{n-1}\to \im(N_T)/\widetilde V_{n-1} \to 0,
\end{align}
where the second map is the composition $E\hookrightarrow \bC^{ns} \twoheadrightarrow \bC^{ns}/\widetilde V_{n-1}$ and the third map is induced by $N_T$.
Then we have the following identity of Chern classes,
\begin{align}
c(E) = \frac{c(\bC^{ns}/\widetilde V_{n-1})}{c(\im(N_T)/\widetilde V_{n-1})} = c(\bC^{ns}/\im(N_T)) = 1,
\end{align}
since $\bC^{ns}/\im(N_T)$ is a trivial bundle, which completes the proof.
\end{proof}

\begin{example}[Hirzebruch surface]\label{ex:Hirzebruch}
Consider $Y_{2, \emptyset, 2, T}$ where $T$ is the Schubert-compatible filling of $\Lambda(2, \emptyset, 2) = (2, 2)$ according to reading order, so $\ker(N_T) = \langle f_1, f_2 \rangle$. We have
\begin{equation}
Y_{2, \emptyset, 2} = \{V_\bullet \in \Fl_{(1,1)}(\bC^4) : V_1 \subset \ker(N_T), \ N_T V_2 \subset V_1\}.
\end{equation}
The same variety appears in work of Cautis and Kamnitzer~\cite{Cautis-Kamnitzer} and of Russell~\cite{Russell} on connections between Springer fibers and knot homology.

By Lemma \ref{lem:ProjectiveBundle}, forgetting $V_2$ realizes $Y_{2,\emptyset,2}$ as a $\mathbb{P}^1$-bundle over $Y_{1, \emptyset, 2} = \bP(\ker N_T) = \bP^1$ isomorphic to $\bP(E) = \bP(\widetilde{N_T^{-1}V_1} / \widetilde{V_1})$, the projectivization of a $2$-dimensional vector bundle. In fact,
\begin{equation} \label{eq:2ndHirzebruchIsom}
    Y_{2, \emptyset, 2} \cong \bP(\mathcal{O}(1) \oplus \mathcal{O}(-1)),
\end{equation}
the second Hirzebruch surface. In particular, $Y_{2,\emptyset,2}$ is not isomorphic as an algebraic variety to $\bP^1 \times \bP^1$, although $\bP^1 \times \bP^1$ has an isomorphic cohomology ring.

To see \eqref{eq:2ndHirzebruchIsom}, observe that there is an isomorphism
\begin{align}
    E = \widetilde{N_T^{-1}V_1} / \widetilde{V_1} \to (\ker(N_T)/\widetilde{V_1}) \oplus \widetilde{V_1},
\end{align}
induced by the linear map on vector spaces sending the coset $\overline{v} = v + V_1\in N_T^{-1}V_1/V_1$ to the pair $(N_TN_t^T \,\overline{v},\,N_T\,v)$. We leave it to the reader to check that this map is a well-defined isomorphism of complex vector bundles. The proof of~\eqref{eq:2ndHirzebruchIsom} is completed by observing that we have isomorphisms $\ker(N_T)/\widetilde{V_1}\cong \mathcal{O}(1)$ and $\widetilde V_1 \cong \mathcal{O}(-1)$ of vector bundles over $\bP^1 =\bP(\ker N_T)$.

We will see in Remark~\ref{rmk:s=2} that this Hirzebruch surface can also be identified as one of the components of the Springer fiber for the partition $(2,2)$.
\end{example}

Next we show that there is an embedding $Y_{n,\la,s}\rightarrow Y_{n,\emptyset,s}$ for any partition $\la$, and the induced map on cohomology is a surjection.

\begin{lemma}\label{lem:Embedding}
Let $T$ be a $(n,\la,s)$-Schubert-compatible filling of $\Lambda(n,\la,s)$, and let $T'$ be a $(n,\emptyset,s)$-Schubert-compatible filling of $\Lambda(n,\emptyset,s) = (n^s)$ such that every entry of the $i$-th row of $T$ is in the $i$-th row of $T'$. The linear map $\iota : \bC^K \hookrightarrow \bC^{ns}$ defined as the inclusion of the first $K$ coordinates in $\bC^{ns}$ induces a commutative diagram
\begin{equation}
\begin{tikzcd}
Y_{n,\lambda,s,T}\arrow[r,hookrightarrow,"\iota"]\arrow[d,hookrightarrow] & \arrow[d,hookrightarrow] Y_{n,\emptyset,s,T'}\\
\Fl_{(1^n)}(\bC^K) \arrow[r,hookrightarrow,"\iota"] & \Fl_{(1^n)}(\bC^{ns}),
\end{tikzcd}
\end{equation}
where each inclusion is a closed embedding.
\end{lemma}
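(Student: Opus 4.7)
The plan is to establish the bottom inclusion as a closed embedding of flag varieties and then verify that the induced map restricts correctly to the $\Delta$-Springer subvarieties; the core of the argument is to check that $N_{T'}$ agrees with $N_T$ on the subspace $\iota(\bC^K)$. The linear injection $\iota$ induces an algebraic map $\bar\iota : \Fl_{(1^n)}(\bC^K) \to \Fl_{(1^n)}(\bC^{ns})$ sending a partial flag $V_\bullet$ to $\iota(V_\bullet) = (\iota(V_1) \subset \cdots \subset \iota(V_n))$. Since $\iota$ preserves dimensions, the image consists precisely of partial flags $W_\bullet$ satisfying the closed condition $W_n \subseteq F_K$ (where $F_K = \iota(\bC^K) \subset \bC^{ns}$), and the inverse $W_\bullet \mapsto \iota^{-1}(W_\bullet)$ is algebraic on this locus, so $\bar\iota$ is a closed embedding. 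Commutativity of the square is immediate from the construction.

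The central step is to verify $N_{T'}(f_j) = N_T(f_j)$ for every $j \leq K$ (viewing both sides inside $\bC^{ns}$), equivalently $N_{T'} \circ \iota = \iota \circ N_T$. Fix $j \leq K$; by hypothesis, $j$ occurs in the same row $i$ of both $T$ and $T'$, say at position $(i,p)$ in $T$ and $(i,p')$ in $T'$. The labels of $T$ are $\{1,2,\ldots,K\}$ by (S1)--(S2), so the ``extra'' labels appearing in $T'$ but not $T$ are $\{K+1,\ldots,ns\}$, all of which are strictly greater than $j$. Consequently, the labels in row $i$ of $T'$ that are smaller than $j$ coincide exactly with the labels in row $i$ of $T$ that are smaller than $j$. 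Combined with condition (S3), which forces each label's position within its row to be determined by its rank, this yields $T'(i, p'+1) = T(i, p+1)$ whenever $p < \Lambda_i$, and also $p = \Lambda_i \Leftrightarrow p' = n = \Lambda'_i$. In either case, the defining rules for $N_T$ and $N_{T'}$ give $N_{T'}(f_j) = N_T(f_j)$.

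With this compatibility in hand, the restriction is immediate: if $V_\bullet \in Y_{n,\lambda,s,T}$, then for each $i$ we have $N_{T'}(\iota(V_i)) = \iota(N_T(V_i)) \subseteq \iota(V_{i-1})$, so $\iota(V_\bullet) \in Y_{n,\emptyset,s,T'}$ (the ``$F_k \subseteq V_n$'' condition in the target is vacuous since $k = 0$ there). Because $Y_{n,\lambda,s,T}$ is a closed subvariety of $\Fl_{(1^n)}(\bC^K)$, its image under $\bar\iota$ is closed in $\Fl_{(1^n)}(\bC^{ns})$, so in particular it is closed inside $Y_{n,\emptyset,s,T'}$; hence the top horizontal arrow is also a closed embedding. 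The main obstacle is the compatibility verification in the second paragraph --- it requires a careful unpacking of how the row-compatibility hypothesis on $T$ and $T'$ interacts with the decreasing-row condition (S3) to force the position shift from $(i,p)$ to $(i,p')$ to preserve the local action of the nilpotent. Once that is established, the remainder of the argument is formal.
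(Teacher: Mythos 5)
Your proof is correct and follows essentially the same route as the paper's: the key point in both is verifying the intertwining relation $N_{T'}\circ\iota = \iota\circ N_T$, which follows from the hypothesis on row placements together with (S1)--(S3) and the observation that the extra labels $\{K+1,\dots,ns\}$ of $T'$ all exceed every label of $T$. The paper phrases this as ``the entries of $T$ in row $i$ are right-justified in row $i$ of $T'$,'' whereas you deduce the equivalent local statement by comparing, for each $j\le K$, the sets of labels less than $j$ in row $i$ of $T$ and $T'$; these are the same argument.
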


\begin{proof}
Since the entries of $T$ in row $i$ are right justified in row $i$ of $T'$ for all $i$, we have $\iota \circ N_T = N_{T'} \circ \iota$. Furthermore, the containment $(N_{T'})^n \,\bC^{ns} \subseteq \iota(V_n)$ is trivial since $(N_{T'})^n = 0$. Hence, the injection $Y_{n,\la,s,T}\xrightarrow{\iota} Y_{n,\emptyset,s,T'}$ induced by $\iota$ is well defined. The commutativity of the diagram is immediate.
\end{proof}

\begin{lemma}\label{lem:AffinePavingDiff}
With the same hypotheses as Lemma~\ref{lem:Embedding}, the open subspace ${Y_{n,\emptyset,s,T'}\setminus \iota(Y_{n,\la,s,T})}$ has an affine paving.
\end{lemma}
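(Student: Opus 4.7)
\medskip

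\noindent\textbf{Proof plan.} The approach is to leverage the affine paving of $Y_{n,\emptyset,s,T'}$ provided by Theorem~\ref{thm:AffinePavingY} and show that $\iota(Y_{n,\la,s,T})$ is precisely a union of cells of this paving. Once that is established, the complement is automatically a union of the remaining cells and inherits an induced affine paving via the standard construction: setting $Z_i := X_i \setminus \iota(Y_{n,\la,s,T})$ for the original filtration $X_0 \subseteq \cdots \subseteq X_m = Y_{n,\emptyset,s,T'}$, each $Z_i$ is closed in the complement (since $X_i$ is closed in $Y_{n,\emptyset,s,T'}$ and $\iota(Y_{n,\la,s,T})$ is closed by Lemma~\ref{lem:Embedding}), and $Z_i \setminus Z_{i-1}$ consists of exactly those cells $C_w \cap Y_{n,\emptyset,s,T'}$ of dimension $i$ that are not contained in $\iota(Y_{n,\la,s,T})$.

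\medskip

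The first main step is to identify $\iota(Y_{n,\la,s,T})$ internally to $Y_{n,\emptyset,s,T'}$ as the locus cut out by the two subspace conditions
\begin{equation*}
V_n \subseteq F_K \qquad \text{and} \qquad F_k \subseteq V_n,
\end{equation*}
where $K = |\Lambda(n,\la,s)|$. The first condition captures the image of $\iota$. For the second, I will check using Schubert compatibility (S2) and (S3) that the hypothesis of Lemma~\ref{lem:Embedding} forces the entries of row $i$ of $T$ to occupy the rightmost $\Lambda_i$ cells of row $i$ of $T'$ (because both rows are decreasing left-to-right and the entries of $T$ are exactly $[K]$, the smallest values). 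This in turn gives $N_{T'}|_{F_K} = \iota \circ N_T \circ \iota^{-1}$ and $(N_{T'})^{n-k}F_K = F_k = (N_T)^{n-k}\bC^K$, so the nilpotency and image conditions translate correctly between $Y_{n,\la,s,T}$ and the locus described above.

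\medskip

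The second main step is that both conditions $V_n \subseteq F_K$ and $F_k \subseteq V_n$ are ``Schubert conditions'': by the defining equality $\dim(V_i\cap F_p) = \#\{j\leq i \st w(j)\leq p\}$ in \eqref{eqn:SchubCellDef}, they hold for some $V_\bullet \in C_w$ if and only if they hold for every $V_\bullet \in C_w$, namely iff $w([n])\subseteq [K]$ and $[k]\subseteq w([n])$, respectively. Therefore each cell $C_w\cap Y_{n,\emptyset,s,T'}$ is either entirely contained in $\iota(Y_{n,\la,s,T})$ or entirely disjoint from it, and $\iota(Y_{n,\la,s,T})$ is a union of cells as required.

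\medskip

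The main obstacle is purely bookkeeping: carefully verifying the ``right-justification'' of the entries of $T$ inside $T'$, and checking that the two subspace conditions together with the $N_{T'}$-stability already guaranteed in $Y_{n,\emptyset,s,T'}$ cut out exactly $\iota(Y_{n,\la,s,T})$ scheme-theoretically (or at least set-theoretically, which suffices for the affine paving argument). Once these compatibilities are in hand, the partition of the Schubert-cell paving into ``inside'' and ``outside'' cells is immediate and the affine paving of the complement follows formally.
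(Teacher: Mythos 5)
Your proposal is correct and reaches the same conclusion as the paper, but the route you take to showing $\iota(Y_{n,\la,s,T})$ is a union of cells is organized somewhat differently. The paper proceeds cell-by-cell: for each $w$ admissible with respect to $T$, it defines $w':[n]\to[ns]$ by extending the codomain and checks that $\iota(C_w\cap Y_{n,\la,s,T})=C_{w'}\cap Y_{n,\emptyset,s,T'}$, so the image is exactly the union of cells $C_{w'}\cap Y_{n,\emptyset,s,T'}$ with $[k]\subseteq\im(w')\subseteq[K]$. You instead characterize the image globally as the locus in $Y_{n,\emptyset,s,T'}$ cut out by $V_n\subseteq F_K$ and $F_k\subseteq V_n$, then observe both are Schubert-cell conditions (constant on each $C_w$). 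Your version is arguably cleaner because it separates the two ingredients: (a) the image is a closed subvariety defined by Schubert conditions, and (b) Schubert conditions respect the cell stratification; whereas the paper's matching of cells packages (a) and (b) together. Your bookkeeping step---that Schubert compatibility forces the entries of row $i$ of $T$ to occupy the rightmost $\Lambda_i$ cells of row $i$ of $T'$, hence $N_{T'}|_{F_K}=\iota\circ N_T\circ\iota^{-1}$---is indeed the key technical point; note it follows because the entries of $T'$ in row $i$ not coming from $T$ must all be $>K$ (since every label $\leq K$ lies in some row of $T$ and by hypothesis stays in that same row of $T'$), and rows of $T'$ decrease. With that verified, the reverse inclusion in your characterization holds and the rest of the argument is as you describe.
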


\begin{proof}
By Theorem~\ref{thm:AffinePavingY}, the intersections $C_w\cap Y_{n,\la,s,T}$ for $w$ admissible with respect to $T$ are the cells of an affine paving of $Y_{n,\la,s,T}$, and the intersections $C_v\cap Y_{n,\emptyset,s,T'}$ for $v$ admissible with respect to $T'$ are the cells of an affine paving of $Y_{n,\emptyset,s,T'}$.

Given such a cell $C_w\cap Y_{n,\la,s,T}$, where $w$ is admissible with respect to $T$, define $w' : [n]\to [ns]$ by extending the codomain of $w$ to $[ns]$. Then $w'$ is admissible with respect to $T'$, and it can be checked that $\iota(C_w\cap Y_{n,\la,s,T}) = C_{w'}\cap Y_{n,\emptyset,s,T'}$. Therefore, $Y_{n,\emptyset,s,T'}\setminus \iota(Y_{n,\la,s,T})$ has an affine paving given by removing the cells $C_{w'}\cap Y_{n,\emptyset,s,T'}$ coming from $Y_{n,\la,s,T}$. These are the $w'$ with $[k]\subseteq \im(w')\subseteq[K]$. \end{proof}

\begin{theorem}\label{thm:Surj}
With the same hypotheses as Lemma~\ref{lem:Embedding}, the closed embedding $\iota$ induces a surjection
\begin{align}
\frac{\bZ[x_1,\dots,x_n]}{\langle x_1^s,\dots,x_n^s\rangle} \cong H^*(Y_{n,\emptyset,s,T}) \twoheadrightarrow H^*(Y_{n,\la,s,T'}).
\end{align}
\end{theorem}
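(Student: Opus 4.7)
The plan is to apply the Paving Surjectivity Lemma (Lemma~\ref{lem:PavingSurj}) directly. That lemma requires three ingredients: (i) $Y_{n,\emptyset,s,T'}$ is smooth and compact, (ii) the closed subvariety $\iota(Y_{n,\la,s,T}) \subseteq Y_{n,\emptyset,s,T'}$ has an affine paving, and (iii) its complement $Y_{n,\emptyset,s,T'} \setminus \iota(Y_{n,\la,s,T})$ has an affine paving.

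For (i), I would observe that $Y_{n,\emptyset,s,T'}$ is compact as a closed subvariety of a partial flag variety, and smoothness follows from Lemma~\ref{lem:ProjectiveBundle}: iterating the projective bundle description exhibits $Y_{n,\emptyset,s,T'}$ as an iterated $\bP^{s-1}$-bundle starting from a point, hence smooth. For (ii), Theorem~\ref{thm:AffinePavingY} provides an affine paving of $Y_{n,\la,s,T}$ by intersections with Schubert cells, and $\iota$ carries this to an affine paving of $\iota(Y_{n,\la,s,T})$ since $\iota$ is a closed embedding compatible with the Schubert stratification (as used in the proof of Lemma~\ref{lem:AffinePavingDiff}). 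For (iii), this is precisely the content of Lemma~\ref{lem:AffinePavingDiff}.

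With these three ingredients in hand, Lemma~\ref{lem:PavingSurj} yields the surjection $H^*(Y_{n,\emptyset,s,T'}) \twoheadrightarrow H^*(\iota(Y_{n,\la,s,T})) \cong H^*(Y_{n,\la,s,T})$. Finally, the identification of $H^*(Y_{n,\emptyset,s,T'})$ with $\bZ[x_1,\dots,x_n]/\langle x_1^s,\dots,x_n^s\rangle$ is supplied by Lemma~\ref{lem:CohEmptyPartition}, giving the stated surjection.

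There is no genuine obstacle here: the result is essentially an assembly of the preceding lemmas. The only minor point requiring attention is confirming that the horizontal inclusion in Lemma~\ref{lem:Embedding} respects the tautological bundles so that the $x_i$'s on the two sides match under pullback. This follows immediately from naturality of Chern classes, since $\iota^* \widetilde V_i^{(Y_{n,\emptyset,s,T'})} = \widetilde V_i^{(Y_{n,\la,s,T})}$ by construction of $\iota$ at the level of flags.
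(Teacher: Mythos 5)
Your proposal is correct and follows essentially the same route as the paper, which cites exactly the same three ingredients (Lemma~\ref{lem:PavingSurj}, Theorem~\ref{thm:AffinePavingY}, and Lemma~\ref{lem:AffinePavingDiff}) and declares the result immediate; you simply unpack the implicit appeals to Lemma~\ref{lem:ProjectiveBundle} for smoothness and Lemma~\ref{lem:CohEmptyPartition} for the ring identification, which is fine.
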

\begin{proof}
This follows immediately by Lemma~\ref{lem:PavingSurj}, Theorem~\ref{thm:AffinePavingY}, and Lemma~\ref{lem:AffinePavingDiff}.
\end{proof}

We end this section by remarking on the structure of $Y_{n,\emptyset,s}$ as a real manifold. In particular, Lemma \ref{lem:CohEmptyPartition} can also be shown by topological means.

\begin{proposition} \label{prop:diffeomorphism}
There is a diffeomorphism $Y_{n, \emptyset, s} \cong (\bP^{s-1})^n$ as real manifolds. In particular, $H^*(Y_{n, \emptyset, s}) \cong H^*((\bP^{s-1})^n)$.
\end{proposition}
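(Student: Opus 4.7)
My plan is to induct on $n$, building on Lemma~\ref{lem:ProjectiveBundle}. Choose a Schubert-compatible filling $T$ of $[(n^s)]$ satisfying the hypothesis of Lemma~\ref{lem:ProjectiveBundle} (for instance, the reading-order filling). Then Lemma~\ref{lem:ProjectiveBundle} gives $Y_{n,\emptyset,s,T}\cong\bP(E)$ as a $\bP^{s-1}$-bundle over $Y_{n-1,\emptyset,s,T'}$, where $E := N_T^{-1}\widetilde V_{n-1}/\widetilde V_{n-1}$ is a smooth complex vector bundle of rank $s$. The base case $n=1$ is immediate from $Y_{1,\emptyset,s} = \bP(\ker N_T) \cong \bP^{s-1}$. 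For the inductive step, assuming $Y_{n-1,\emptyset,s} \cong (\bP^{s-1})^{n-1}$ as smooth manifolds, it then suffices to show that $\bP(E)\to Y_{n-1,\emptyset,s}$ is smoothly trivial.

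The key tool is the pair of short exact sequences of smooth complex vector bundles on $Y_{n-1,\emptyset,s}$,
\[
0 \to \widetilde V_{n-1} \to N_T^{-1}\widetilde V_{n-1} \to E \to 0, \qquad 0 \to \ker N_T \to N_T^{-1}\widetilde V_{n-1} \xrightarrow{N_T} \widetilde V_{n-1} \to 0,
\]
where the first inclusion holds because $N_T\widetilde V_{n-1}\subseteq \widetilde V_{n-2}\subseteq \widetilde V_{n-1}$, and the second sequence uses $\widetilde V_{n-1}\subseteq \ker N_T^{n-1} = \im N_T$ (the latter equality following from the Jordan block structure, since within a single Jordan block of size $n$ both sides equal the span of all but the highest basis vector). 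Fixing any Hermitian metric on $\bC^{ns}$ and taking Hermitian orthogonal splittings of both sequences yields smooth vector bundle isomorphisms
\[
\widetilde V_{n-1} \oplus E \,\cong\, N_T^{-1}\widetilde V_{n-1} \,\cong\, \widetilde V_{n-1} \oplus \ker N_T \,\cong\, \widetilde V_{n-1} \oplus \underline{\bC^s},
\]
the last step using that $\ker N_T$ is a fixed subspace of $\bC^{ns}$ and hence defines the trivial rank-$s$ bundle $\underline{\bC^s}$.

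The main obstacle is to promote this stable equivalence of smooth vector bundles to a smooth trivialization of the projective bundle $\bP(E)$. General cancellation of smooth complex vector bundles need not hold in the relevant dimension range, so an additional argument is required. My approach would be to exploit the inductive product structure $Y_{n-1,\emptyset,s}\cong(\bP^{s-1})^{n-1}$: iterating Lemma~\ref{lem:ProjectiveBundle} yields an explicit recursive description of $\widetilde V_{n-1}$ and the other bundles above in terms of tautological line bundles on each $\bP^{s-1}$ factor, and carefully tracking the Hermitian isomorphisms through this product decomposition should yield an explicit factorwise trivialization of $\bP(E)$. This is the technically hardest step; I expect the inductive structure of the bundles, combined with the evenness of the cells in the product decomposition of the base, to be sufficient to carry out the trivialization.
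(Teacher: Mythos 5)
Your overall inductive setup via Lemma~\ref{lem:ProjectiveBundle}, the two short exact sequences, and the conclusion that
\[
\widetilde V_{n-1}\oplus E \;\cong\; N_T^{-1}\widetilde V_{n-1} \;\cong\; \widetilde V_{n-1}\oplus \underline{\bC^s}
\]
are all correct. You have also correctly identified the weak point: passing from this ``stable'' isomorphism to an actual smooth trivialization of $\bP(E)$. That step is where the proposal has a genuine, unresolved gap. Adding a complement of $\widetilde V_{n-1}$ inside $\underline{\bC^{ns}}$ to both sides shows that $E\oplus\underline{\bC^{ns}}$ is trivial, i.e.\ $E$ is stably trivial; but cancellation of trivial summands for a rank-$s$ complex bundle over a CW base of complex dimension $(n-1)(s-1)$ requires (roughly) $s>(n-1)(s-1)$, which fails as soon as $n\geq 4$ (and already $n\geq 3$ for $s\geq 3$). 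So ``stable triviality plus vanishing Chern classes'' does not by itself give triviality of $E$, and hence does not trivialize $\bP(E)$. Your proposed remedy --- tracking the Hermitian splittings through the inductive product decomposition of the base --- is a reasonable thing to hope for, but as written it is a statement of intent rather than an argument; the obstructions to trivializing a rank-$s$ bundle over an even-cell complex live in groups like $H^{2j}(Y_{n-1,\emptyset,s};\pi_{2j-1}(U(s)))$ for $j>s$, and nothing in the proposal shows these vanish.

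For comparison, the paper's own ``proof'' of this proposition is a one-line citation to Cautis--Kamnitzer~\cite{Cautis-Kamnitzer}, Theorem~2.1, which handles $s=2$; their argument constructs an explicit diffeomorphism $Y_n\to(\bP^1)^n$ using the Hermitian metric and the Jordan-block structure of $N$ rather than arguing abstractly that the iterated projective bundles are trivial. So even if your cancellation step could be salvaged, the route would be genuinely different from the one the paper is invoking. As it stands, though, the proposal does not close the key step and so does not constitute a proof.
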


\begin{proof}
The argument is essentially identical to \cite[Theorem 2.1]{Cautis-Kamnitzer} (which in our notation covers the case $s=2$). Since we do not need the precise statement, we omit the details.
\end{proof}

We note however that despite Proposition \ref{prop:diffeomorphism}, $(\bP^{s-1})^n$ and the iterated projective bundle $Y_{n, \emptyset, s}$ are not in general isomorphic as varieties or complex manifolds.

\begin{remark}
As stated in the introduction, a special case of our results states that $Y_{n,(1^k),k}$ has the same cohomology ring as the Pawlowski--Rhoades spanning line arrangement space $X_{n,k}$. The latter~\cite{Pawlowski-Rhoades} is an open subvariety of $(\bP^{k-1})^n$, while the $\Delta$-Springer variety $Y_{n,(1^k),k}$ is a closed subvariety of $Y_{n,\emptyset,k}$.

In fact, $X_{n,k}$ and $Y_{n, (1^k), k}$ are further related by the fact that they have affine pavings satisfying the same recursion. In the case of spanning line arrangements, the recursion involves intermediary spaces $X_{n,k,r}$ for $0\leq r\leq k$, where the $r=0$ case is the product of projective spaces and the $r=k$ case is $X_{n,k}$. In the case of $\Delta$-Springer varieties, the corresponding intermediary spaces are $Y_{n,(1^r),k}$ by Lemma~\ref{lem:CellRecursion}, and the $r=0$ case is $Y_{n, \emptyset, k}$. Furthermore, combining 
Theorem~\ref{thm:MainThmIntro} and~\cite[Theorem 8.4]{Pawlowski-Rhoades} gives isomorphisms
\begin{equation}
H^*(X_{n,k,r}) \cong H^*(Y_{n,(1^r),k}).
\end{equation}
\end{remark}

\section{Spaltenstein varieties and the cohomology of \texorpdfstring{$Y_{n,\lambda,s}$}{Yn,lambda,s}}\label{sec:SpaltensteinAndCohomology}

In this section, we prove that there is a cellular surjective map from a Spaltenstein variety to $Y_{n,\la,s}$. We use this fact together with work of Brundan and Ostrik on the cohomology ring of a Spaltenstein variety~\cite{Brundan-Ostrik} to prove that the cohomology ring of $Y_{n,\la,s}$ is isomorphic to $R_{n,\la,s}$, stated as Theorem~\ref{thm:MainTheorem}. 

Let us outline our strategy. First, by Theorem~\ref{thm:Surj} we know that $H^*(Y_{n,\la,s})$ is a quotient of the ring
\begin{align}
\frac{\bZ[x_1,\dots,x_n]}{\langle x_1^s,\dots, x_n^s\rangle}.
\end{align}
Next, by Theorem~\ref{thm:RankGenNLaS}, the rings $R_{n,\la,s}$ and $H^*(Y_{n,\la,s})$ are free $\bZ$-modules with the same Hilbert series. Since the defining ideal of $R_{n,\la,s}$ is $I_{n,\la,s} = I_{n,\la} + \langle x_1^s,\dots,x_n^s\rangle$, it thus suffices to prove that for each generator $e_d(S)$ of $I_{n,\la}$ where $S\subseteq \{x_1,\dots, x_n\}$, the same polynomial in the first Chern classes $x_i = -c_1(\widetilde V_i/\widetilde V_{i-1})$ vanishes in $H^*(Y_{n,\la,s})$. To do this, we exhibit an injection from $H^*(Y_{n,\la,s})$ into the cohomology of a Spaltenstein variety, and we prove that the $e_d(S)$ polynomials in the first Chern classes vanish in the cohomology ring of the Spaltenstein variety using results of Brundan and Ostrik~\cite{Brundan-Ostrik}.

Let us recall the definition of a Spaltenstein variety. Given an $m\times m$ nilpotent matrix $N_\nu$ of Jordan type $\nu\vdash m$ and a composition $\mu\vDash m$ of length $\ell$, the \textbf{Spaltenstein variety} is
\begin{align}
\cB_\mu^{\nu} \coloneqq \{V_\bullet \in \Fl_{\mu_1,\,\mu_2,\dots,\, \mu_\ell}(\bC^m)\st N_\nu V_i \subseteq V_{i-1} \text{ for }i\leq \ell\}.
\end{align}
Let $X_j = \{x_{\mu_1+\cdots + \mu_{j-1}+1},\dots, x_{\mu_1+\cdots +\mu_j}\}$. Given $1\leq i_1<\cdots < i_p\leq \ell$, let 
\begin{align}
e_d(X;i_1,\dots, i_p) \coloneqq e_d(X_{i_1}\cup X_{i_2}\cup\cdots \cup X_{i_p}).
\end{align}
Furthermore, let $I_\mu^\nu$ be the following ideal of $\bZ[x_1,\dots, x_m]$,
\begin{equation}
I_\mu^\nu \coloneqq \langle e_d(X;i_1,\dots, i_p) \st 1\leq i_1 < \cdots < i_p\leq \ell\text{ and }  d > \mu_{i_1} + \cdots + \mu_{i_p} - \nu'_{\ell-p+1} -\cdots - \nu'_m \rangle.
\end{equation}
Brundan and Ostrik~\cite{Brundan-Ostrik} proved the following isomorphism of graded rings,
\begin{align}
H^*(\cB_\mu^\nu) \cong \frac{\bZ[x_1,\dots, x_m]^{S_{\mu}}}{I_{\mu}^\nu},
\end{align}
where $x_i$ on the right-hand side is identified with $-c_1(\widetilde{V}_i/\widetilde{V}_{i-1})$.

Let us take the special case where $m = K$, $\nu = \Lambda$, and $\mu = (1^n,s-1,s-1,\dots, s-1)$, where $s-1$ is repeated $n-k$ many times, so that $\ell = 2n-k$. Observe that $\Lambda'_{n-k+i} = \la'_i$ for $i\geq 0$. Further observe that for each $j\leq n$, then $X_j = \{x_j\}$. Setting $S = \{i_1,\dots, i_p\}$ for $1<i_1<\cdots <i_p \leq n$, then $e_d(S)\in I_\mu^\Lambda$ for
\begin{align}
d > p-\Lambda'_{(2n-k)-p+1} - \dots -\Lambda'_K,
\end{align}
or equivalently
\begin{align}
d > p-\la'_{n-p+1} - \dots - \la'_n.
\end{align}
The next lemma follows immediately from these observations.
\begin{lemma}\label{lem:IdealContainment}
Recall the ideal $I_{n,\la}\subseteq \bZ[x_1,\dots,x_n]$ defined in Definition~\ref{def:RnLaDef}. With $\mu$ as above, we have the containment of sets $I_{n,\la} \subseteq I_\mu^\Lambda$.
\end{lemma}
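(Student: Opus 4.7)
The plan is to verify the containment one generator at a time, with essentially no machinery beyond the explicit descriptions of $I_{n,\la}$ and $I_\mu^\Lambda$ on the preceding page. I would take an arbitrary generator $e_d(S)$ of $I_{n,\la}$, writing $S = \{x_{j_1}, \ldots, x_{j_p}\} \subseteq \{x_1, \ldots, x_n\}$ with $j_1 < \cdots < j_p$ and $d > p - \la'_n - \cdots - \la'_{n-p+1}$, and exhibit it as a generator of $I_\mu^\Lambda$ using the index sequence $(i_1, \ldots, i_p) = (j_1, \ldots, j_p) \subseteq [n] \subseteq [\ell]$.

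Because the first $n$ parts of $\mu$ all equal $1$, each block $X_{j_i}$ is the singleton $\{x_{j_i}\}$, so the identity $e_d(S) = e_d(X; j_1, \ldots, j_p)$ is tautological, and moreover $\mu_{j_1} + \cdots + \mu_{j_p} = p$. The proof therefore reduces to checking that the degree bound for $I_{n,\la}$ agrees with the degree bound required for $I_\mu^\Lambda$, which amounts to the numerical identity
\begin{equation}
\la'_{n-p+1} + \cdots + \la'_n \;=\; \Lambda'_{\ell-p+1} + \cdots + \Lambda'_K.
\end{equation}

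For this identity I would use the shift $\Lambda'_{n-k+i} = \la'_i$ for $i \geq 1$ already recorded in the paragraph before the lemma. Since $p \leq n$, every index $j$ appearing in the right-hand sum satisfies $j \geq \ell - p + 1 = 2n - k - p + 1 > n-k$, so each $\Lambda'_j$ converts to $\la'_{j-(n-k)}$; this rewrites the right-hand sum as $\la'_{n-p+1} + \la'_{n-p+2} + \cdots + \la'_{(s-1)(n-k)+k}$. The equality with the left-hand sum is then immediate from $\la'_i = 0$ for $i > k$ (since $\la \vdash k$), because all ``extra'' terms on either side beyond index $k$ vanish.

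I do not foresee any real obstacle; the statement is essentially a bookkeeping claim, and even the author writes that it ``follows immediately from these observations.'' The only point requiring care is to keep the indexing straight: confirming that the range of summation on the $\Lambda'$-side lies entirely in the region $j > n-k$ where the shift $\Lambda'_j = \la'_{j-(n-k)}$ is valid, and separately noting the vanishing of $\la'_i$ for $i > k$ so the two correction sums match term-for-term.
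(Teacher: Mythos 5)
Your proof is correct and follows exactly the route the paper sketches (the paper compresses it into the two ``observations'' preceding the lemma and then says it ``follows immediately''). You correctly identify that the first $n$ blocks of $\mu$ are singletons so $e_d(S) = e_d(X;j_1,\dots,j_p)$ with $\mu_{j_1}+\cdots+\mu_{j_p}=p$, apply the shift $\Lambda'_{n-k+i}=\lambda'_i$ valid since $p\le n$ pushes all relevant indices past $n-k$, and close with $\lambda'_i=0$ for $i>k$ to match the summation ranges.
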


\begin{remark}
It will follow from Lemma~\ref{lem:InjCohomology} below that in fact we have the stronger containment $I_{n,\la,s}\subseteq I_\mu^\Lambda$. This is not immediate from Brundan and Ostrik's presentation of $H^*(\cB_\mu^\Lambda)$ but could likely be shown with an algebraic calculation using symmetric polynomial identities.
\end{remark}

Observe that there is a map
\begin{align}
\pi : \cB_\mu^\Lambda \rightarrow Y_{n,\la,s}
\end{align}
given by projecting onto the first $n$ parts of the partial flag. Indeed, if $V_\bullet \in \cB_\mu^\Lambda$, then $V_{2n-k} = \bC^K$ by definition. Since $N_\Lambda V_i\subseteq V_{i-1}$ for all $i$, then $\im(N_\Lambda^{n-k}) = N_\Lambda^{n-k}V_{2n-k} \subseteq V_n$, so $\pi(V_\bullet) \in Y_{n,\la,s}$. 
In order to show that the map $\pi$ is a surjective cellular map, we need the following two lemmata, the second of which is a strengthening of Lemma~\ref{lem:InvertibleTransf} that only holds for a subclass of Schubert-compatible fillings.

\begin{lemma}\label{lem:LeadingTerm}
Let $T$ be a Schubert-compatible filling. If $j>1$, then
\begin{align}
    N^t_T(F_{T(i,j)}\setminus F_{T(i,j)-1})\subseteq F_{T(i,j-1)}\setminus F_{T(i,j-1)-1}.
\end{align} 
\end{lemma}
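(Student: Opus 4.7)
The plan is to mirror the proof of Lemma~\ref{lem:S6}, only now moving left through the rows of $T$ rather than right, and using that $N_T^t f_{T(i,j)} = f_{T(i,j-1)}$ when $j > 1$. The key combinatorial input is again condition (S6) of Schubert compatibility, but applied in the opposite direction.

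First I would verify the subspace-level containment $N_T^t F_{T(i,j)} \subseteq F_{T(i,j-1)}$. For any basis element $f_{T(a,b)} \in F_{T(i,j)}$ we have $T(a,b) \leq T(i,j)$. If $b = 1$, then $N_T^t f_{T(a,b)} = 0 \in F_{T(i,j-1)}$. Otherwise $b > 1$ and $N_T^t f_{T(a,b)} = f_{T(a,b-1)}$; since both $j,b > 1$, applying (S6) with $T(i,j) \geq T(a,b)$ yields $T(i,j-1) \geq T(a,b-1)$ (the equality case is trivial from injectivity of $T$, and strict inequality follows from (S6)), so $f_{T(a,b-1)} \in F_{T(i,j-1)}$.

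Next, for $v \in F_{T(i,j)} \setminus F_{T(i,j)-1}$, write $v = \sum_{p \leq T(i,j)} c_p f_p$ with $c_{T(i,j)} \neq 0$. The previous paragraph shows $N_T^t v \in F_{T(i,j-1)}$, so I only need to show $N_T^t v \notin F_{T(i,j-1)-1}$, i.e.\ that the $f_{T(i,j-1)}$ coefficient of $N_T^t v$ is nonzero. The contribution from $c_{T(i,j)} f_{T(i,j)}$ is $c_{T(i,j)} f_{T(i,j-1)}$; every other summand $c_p f_p$ with $p < T(i,j)$ satisfies $p = T(a,b)$ with $T(a,b) < T(i,j)$, and from the argument above (using the strict inequality case of (S6) since $T(a,b) < T(i,j)$) we get $T(a,b-1) < T(i,j-1)$ whenever $b > 1$, and $N_T^t f_p = 0$ when $b = 1$. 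Hence no other term contributes to the $f_{T(i,j-1)}$ coefficient, which therefore equals $c_{T(i,j)} \neq 0$.

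There is essentially no obstacle here: the lemma is a direct transpose/mirror of Lemma~\ref{lem:S6}, and the only subtle point is checking that condition (S6), as stated, gives the strict inequality $T(i,j-1) > T(a,b-1)$ from $T(i,j) > T(a,b)$ (with $j,b > 1$), which is exactly the hypothesis of (S6). Everything else is bookkeeping in the ordered basis $\{f_1,\dots,f_K\}$.
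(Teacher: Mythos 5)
Your proof is correct and follows essentially the same approach that the paper sketches: mirroring the argument of Lemma~\ref{lem:S6} using $N_T^t$ in place of $N_T$ and moving left through $[\Lambda]$ instead of right, with (S6) now applied directly rather than in its inverted form. The paper only gives a one-line sketch for this lemma, and your writeup fills in the intended details accurately.
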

\begin{proof}[Sketch]
The proof is an application of (S6), similar to the proof of Lemma~\ref{lem:S6}.
\end{proof}

\begin{definition}
Given a Schubert-compatible filling $T$, we say that it is \textbf{strongly Schubert compatible} if for all $(i,j)\in [\Lambda]$, the label $T(i,j)$ is greater than all labels in column $j+1$.
\end{definition}
Note that this is a strengthening of property (S4) of Schubert compatibility.

\begin{definition}
Let $T$ be a strongly Schubert-compatible filling, $w$ admissible with respect to $T$, and  $V_\bullet\in Y_{n,\la,s,T}\cap C_w$. For each $p\leq n$, let $v_p$ be the vector in $V_p\setminus V_{p-1}$ with leading term $f_{w(p)}$, as in~\eqref{eq:SchubCellCoords}.

We define a linear map $U = U(V_\bullet) : \bC^K \to \bC^K$ by setting
\begin{alignat}{4}
    \label{neweq:UnipotentEq1}
    U f_{w(p)} &= v_p && \text{ for all } 1 \leq p \leq n, \\
    \label{neweq:UnipotentEq2}
    U f_{T(i, j)} &= f_{T(i, j)} && \text{ if $\operatorname{IPRD}_T(w)$ is empty in row $i$}, \\
    \label{neweq:UnipotentEq3}
    U f_{T(i, j)} &= N_T^t U f_{T(i, j+1)} && \text{ recursively for all other cells $(i, j)$ of $[\Lambda]$.}
\end{alignat}
Note that \eqref{neweq:UnipotentEq3} does not cause $U$ to be not well defined because the filled cells in $\operatorname{IPRD}_T(w)$ are right justified.
\end{definition}

Given an injective map $w:[n]\rightarrow[K]$, recall that $F^{(w)}_\bullet$ is the partial flag $F^{(w)}_1\subseteq \cdots \subseteq F^{(w)}_n$ where $F^{(w)}_j=\vspan\{f_{w(1)},\ldots,f_{w(j)}\}$.

\begin{lemma}\label{newlem:TechnicalLemmaUnipotent}
The map $U = U(V_\bullet)$ defined above is unipotent, is upper triangular, and satisfies
\begin{align}
    \label{neweq:UnipotentEq4}
    U F^{(w)}_p &= V_p \text{ for all } 1 \leq p \leq n, \text{ and } \\
    \label{neweq:UnipotentEq5}
    (UN_T - N_TU)x &\in V_n \text{ for all } x \in \bC^K.
\end{align}
\end{lemma}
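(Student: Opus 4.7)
The plan is to verify the three stated properties by a combination of reverse induction on the column index $j$ and a case analysis on the cells of $[\Lambda]$, using the defining rules \eqref{neweq:UnipotentEq1}--\eqref{neweq:UnipotentEq3} together with the strong Schubert compatibility of $T$. First I would check that $U$ is well defined: since the filled cells in $\operatorname{IPRD}_T(w)$ are right-justified, in each non-empty row the recursion \eqref{neweq:UnipotentEq3} starts from the leftmost filled cell (whose value is fixed by \eqref{neweq:UnipotentEq1}) and propagates leftward through the unfilled cells, while empty rows are handled entirely by \eqref{neweq:UnipotentEq2}.

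Next, upper triangularity (and hence unipotence, since every diagonal entry is $1$) reduces to showing $Uf_{T(i,j)} = f_{T(i,j)} + \sum_{h < T(i,j)} \gamma_h f_h$ for every basis vector. Case \eqref{neweq:UnipotentEq2} is immediate, and in case \eqref{neweq:UnipotentEq1} the statement is the Schubert-cell parametrization \eqref{eq:SchubCellCoords} of $v_p$. For case \eqref{neweq:UnipotentEq3} I would induct on $\Lambda_i - j$: applying $N_T^t$ to the inductively known expansion of $Uf_{T(i,j+1)}$ produces $f_{T(i,j)}$ as the leading term, while property (S6) of Schubert compatibility (equivalently, Lemma~\ref{lem:LeadingTerm}) ensures that every other summand $\beta_h N_T^t f_h$ either vanishes (when $h$ lies in column $1$) or has index strictly less than $T(i,j)$. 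Property \eqref{neweq:UnipotentEq4} then follows directly from \eqref{neweq:UnipotentEq1}, since $UF^{(w)}_p = \vspan\{v_1, \ldots, v_p\} = V_p$.

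The principal content is \eqref{neweq:UnipotentEq5}, which I would verify basis vector by basis vector on $f_{T(i,j)}$. When $(i,j)$ is on the right edge, $N_T f_{T(i,j)} = 0$ and $N_T U f_{T(i,j)}$ is either $0$ (if row $i$ is empty) or $N_T v_p \in V_{p-1} \subseteq V_n$ (if $(i,\Lambda_i)$ is filled by $w(p)$). When $(i,j)$ is filled but not on the right edge, right-justification forces $(i,j+1)$ to be filled by some $w(p')$, and $(UN_T - N_TU)f_{T(i,j)} = v_{p'} - N_T v_p$, which lies in $V_n$ since $v_{p'} \in V_{p'} \subseteq V_n$ and $N_T v_p \in V_{p-1} \subseteq V_n$. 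The subtle case is when $(i,j)$ is unfilled with $j < \Lambda_i$: here \eqref{neweq:UnipotentEq3} together with \eqref{eq:NNTranspose} give
\begin{equation*}
(UN_T - N_TU)f_{T(i,j)} = Uf_{T(i,j+1)} - N_T N_T^t Uf_{T(i,j+1)} = (I - N_T N_T^t)\,Uf_{T(i,j+1)},
\end{equation*}
which is the projection of $Uf_{T(i,j+1)}$ onto $\ker(N_T) = \vspan\{f_{T(i',1)}\}$ parallel to $\im(N_T)$. This is the step where the \emph{strong} Schubert compatibility is essential: it forces every column-$1$ label to exceed every label in columns $\geq 2$, so that in the upper-triangular expansion $Uf_{T(i,j+1)} = f_{T(i,j+1)} + \sum_{h < T(i,j+1)} \beta_h f_h$ no basis vector $f_{T(i',1)}$ can occur. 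Hence the projection vanishes and the commutator is $0 \in V_n$. The main obstacle is precisely this final case: without strong Schubert compatibility the expansion could contain column-$1$ terms that do not lie in $V_n$, and the conclusion of the lemma would fail.
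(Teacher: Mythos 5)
Your proposal is correct and follows essentially the same approach as the paper: establish upper triangularity by reverse induction on the column index using (S6), read off \eqref{neweq:UnipotentEq4} directly from \eqref{neweq:UnipotentEq1}, and verify \eqref{neweq:UnipotentEq5} basis vector by basis vector, with the key step being the identity $(UN_T - N_TU)f_{T(i,j)} = (I - N_TN_T^t)Uf_{T(i,j+1)}$ for unfilled non-right-edge cells, where strong Schubert compatibility forces $Uf_{T(i,j+1)} \in \im(N_T)$ so the commutator vanishes. Your case organization for \eqref{neweq:UnipotentEq5} (right edge / filled interior / unfilled interior) differs cosmetically from the paper's (filled / empty row / unfilled in nonempty row), but the underlying computations are the same, and you correctly pinpoint strong Schubert compatibility as the load-bearing hypothesis.
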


\begin{proof}
Identity \eqref{neweq:UnipotentEq4} is immediate from \eqref{neweq:UnipotentEq1} and our choice of $v_p$. We check the remaining conditions as in the proof of Lemma \ref{lem:InvertibleTransf}, examining each value $U f_p$ separately. The reasoning for unipotency and upper triangularity is identical and omitted, and we focus on identity \eqref{neweq:UnipotentEq5}.

It suffices to verify~\eqref{neweq:UnipotentEq5} for $x=f_{T(i,j)}$. We consider the three cases in the definition of $U$. In the first case $x=f_{w(p)}$, by \eqref{neweq:UnipotentEq1} we have
\begin{equation}
(UN_T - N_TU)f_{w(p)} = UN_T f_{w(p)} - N_T v_p.
\end{equation}
The first term, $UN_T f_{w(p)}$, is either $0$ or $v_q$ for some $1 \leq q < p$, since the filled cells of $\operatorname{IPRD}_T(w)$ form a rightwards-closed subset of $[\Lambda]$. The second term is $N_T v_p\in V_{p-1}$. Therefore,~\eqref{neweq:UnipotentEq5} holds for this case. It also trivially holds in the second case \eqref{neweq:UnipotentEq2}.

Finally, if $i$ and $j$ are such that $\operatorname{IPRD}_T(w)$ is nonempty in row $i$, then $Uf_{T(i, j)}$ is defined by~\eqref{neweq:UnipotentEq3}. A similar argument as in the proof of Lemma \ref{lem:InvertibleTransf} gives
\begin{equation}
(UN_T - N_TU)f_{T(i, j)} = (I - N_TN_T^t) U f_{T(i, j+1)}.
\end{equation}
Here again we have $U f_{T(i, j+1)} \in \mathrm{im}(N_T),$
so this is \emph{zero}, not just an element of $V_n$. This is because $T$ is \emph{strongly} Schubert compatible, so upper-triangularity implies that all terms of $U f_{T(i, j+1)}$ are in column $j+1$ or further to the right (compare to Equation \eqref{eq:column j+1}).
\end{proof}

\begin{lemma}\label{lem:FiberBundle}
Fix a strongly Schubert-compatible filling $T$ and an injective map $w: [n]\rightarrow [K]$ that is admissible with respect to $T$. Then we have an isomorphism
\begin{align}\label{neweq:FiberBundleIso}
\Sigma: \pi^{-1}(C_{w}\cap Y_{n,\la,s,T}) \rightarrow (C_{w}\cap Y_{n,\la,s,T})\times \cB_{(s-1)^{n-k}}^{\overline{\Lambda}},
\end{align}
where $(s-1)^{n-k} = (s-1,\dots, s-1)$ with $n-k$ parts and $\overline{\Lambda}$ is the partition obtained by first deleting cells labeled $w(1),\dots, w(n)$ from $T$, then recording the row sizes of the remaining cells in weakly decreasing order.

Furthermore, on $\pi^{-1}(C_w\cap Y_{n,\la,s,T})$, we have the equality of maps $\pi=\pi_1\circ\Sigma$, where
\begin{equation}
    \pi_1:(C_{w}\cap Y_{n,\la,s,T})\times \cB_{(s-1)^{n-k}}^{\overline{\Lambda}} \rightarrow C_w\cap Y_{n,\la,s,T}
\end{equation}
is the projection onto the first factor.
\end{lemma}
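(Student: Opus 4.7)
The plan is to use the unipotent transformation $U = U(V_\bullet)$ from Lemma~\ref{newlem:TechnicalLemmaUnipotent} to trivialize the fibers of $\pi$ over the Schubert cell. First I would analyze the fiber $\pi^{-1}(V_\bullet)$ for a fixed $V_\bullet \in C_w \cap Y_{n,\lambda,s,T}$: an element of this fiber is an extension $V_n \subseteq V'_{n+1} \subseteq \cdots \subseteq V'_{2n-k} = \bC^K$ with $\dim(V'_{n+j}/V'_{n+j-1}) = s-1$ and $N_T V'_{n+j} \subseteq V'_{n+j-1}$. Passing to the quotient $\bC^K/V_n$ and denoting the induced nilpotent operator by $\overline{N}_T$, such an extension is exactly an $\overline{N}_T$-stable partial flag of type $(s-1)^{n-k}$ in $\bC^K/V_n$.

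Set $W_n := F^{(w)}_n = \vspan\{f_{w(1)},\ldots,f_{w(n)}\}$. By Lemma~\ref{newlem:TechnicalLemmaUnipotent}, $U$ is unipotent and satisfies $U W_n = V_n$, so it descends to an isomorphism $\overline{U}: \bC^K/W_n \to \bC^K/V_n$; the identity $(UN_T - N_TU)(\bC^K) \subseteq V_n$ is exactly what is needed to ensure $\overline{U}$ intertwines the induced operators. Thus the fiber is identified, canonically in $V_\bullet$, with the space of $\overline{N}_T$-stable flags of type $(s-1)^{n-k}$ in $\bC^K/W_n$, which is a \emph{fixed} Spaltenstein variety independent of $V_\bullet$. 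To identify it as $\cB_{(s-1)^{n-k}}^{\overline{\Lambda}}$, I would observe that since the cells of $\operatorname{IPRD}_T(w)$ are right-justified in each row of $[\Lambda]$ and $N_T$ shifts cells to the right, the quotient $\bC^K/W_n$ has a basis indexed by the complementary cells, which are left-justified in each row; under $\overline{N}_T$ this basis splits into Jordan chains whose lengths are exactly the row lengths of the complementary shape, yielding the partition $\overline{\Lambda}$ as defined in the statement.

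Finally, I would define
\begin{equation}
\Sigma(V'_\bullet) = \bigl(\pi(V'_\bullet),\; \overline{U}(V_\bullet)^{-1}(V'_\bullet/V_n)\bigr),
\end{equation}
with inverse obtained by applying $\overline{U}(V_\bullet)$ to a flag in $\bC^K/W_n$ and lifting to $\bC^K$. The equality $\pi = \pi_1 \circ \Sigma$ is immediate from the construction. The main obstacle is to verify that $\Sigma$ and $\Sigma^{-1}$ are morphisms of algebraic varieties, not merely bijections of sets. This reduces to showing that $U(V_\bullet)$ depends algebraically on $V_\bullet$, which holds because each $v_p$ in the definition of $U$ is expressed polynomially in the Schubert coordinates of $V_\bullet$ as in \eqref{eq:SchubCellCoords}, and the remaining matrix entries of $U$ are obtained by iterated application of the fixed linear map $N_T^t$. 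A secondary technical point is that the recursion \eqref{neweq:UnipotentEq3} defining $U$ relies on strong Schubert compatibility of $T$ (which forces each $Uf_{T(i,j+1)} \in \im(N_T)$ so that $N_T^t U f_{T(i,j+1)}$ has the expected leading term); this is precisely the hypothesis included in the lemma.
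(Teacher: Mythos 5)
Your proposal is correct and follows essentially the same strategy as the paper's proof: identify each fiber $\pi^{-1}(V_\bullet)$ with the space of $N'_T$-stable flags in $\bC^K/V_n$, then use the unipotent $U(V_\bullet)$ from Lemma~\ref{newlem:TechnicalLemmaUnipotent} to transport uniformly to the fixed model $\bC^K/F_n^{(w)}$, with $(UN_T - N_TU)\bC^K \subseteq V_n$ guaranteeing the induced operators are intertwined and regularity of $U$ in the Schubert coordinates giving algebraicity. The one small addition you make beyond the paper is the explicit verification that the induced operator on $\bC^K/F_n^{(w)}$ has Jordan type $\overline{\Lambda}$, which the paper asserts without elaboration.
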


\begin{proof}
Since $N_TF^{(w)}_n\subseteq F^{(w)}_{n-1}\subseteq F^{(w)}_n$, $N_T$ induces a well-defined nilpotent endomorphism on the quotient space $\bC^K/F^{(w)}_n$.  Denote this induced endomorphism by $N_T^\prime$.  Note that $N_T^\prime$ has Jordan type $\overline{\Lambda}$.
We take as our specific instance of the Spaltenstein variety $\cB_{(s-1)^{n-k}}^{\overline{\Lambda}}$ the space of partial flags
$(W_1\subseteq\cdots\subseteq W_{n-k})\in \Fl_{(s-1)^{n-k}}(\bC^K/F^{(w)}_n)$ satisfying the condition $N'_TW_j\subseteq W_{j-1}$ for $j\leq n-k$.

We now define
\begin{equation}
\Sigma: \pi^{-1}(C_w\cap Y_T)
\rightarrow (C_w\cap Y_T) \times \cB_{(s-1)^{n-k}}^{\overline{\Lambda}}.
\end{equation}

Let $V_\bullet=(V_1\subseteq\cdots\subseteq V_n\subseteq V_{n+1}\subseteq\cdots\subseteq V_{2n-k})\in\pi^{-1}(C_w\cap Y_T)$. By Lemma~\ref{newlem:TechnicalLemmaUnipotent}, there is a unipotent upper triangular matrix $U = U(\pi(V_\bullet))$, whose entries are regular functions on $C_w \cap Y_T$, such that
\begin{align}
U F^{(w)}_p &= V_p \text{ for all } 1 \leq p \leq n, \text{ and } \\
(UN_T-N_TU)x &\in V_n \text{ for all } x \in \bC^K. \label{eq:Commutativity}
\end{align}
Define $\Sigma(V_\bullet)$ by
\begin{align}
    \Sigma(V_\bullet) = \left( (V_1\subseteq\cdots\subseteq V_n) , 
    (W_1\subseteq\cdots\subseteq W_{n-k}) \right),
\end{align}
where $W_j\coloneqq U^{-1}V_{n+j}/F^{(w)}_n$ for all $j$. Since $U$ is unipotent, the entries of $U^{-1}$ are also given by regular functions, so $\Sigma$ is regular.

We need to show that $W_\bullet=(W_1\subseteq\cdots\subseteq W_{n-k})\in \cB_{(s-1)^{n-k}}^{\overline{\Lambda}}$.  Since we have $\dim(V_{n+j})=n+(s-1)j$ and $U^{-1}V_{n+j}\supseteq U^{-1}V_n = F^{(w)}_n$, it follows that $\dim(W_j)=(s-1)j$ and so 
\begin{equation}
W_\bullet \in \Fl_{(s-1)^{n-k}}(\bC^K/F^{(w)}_n). 
\end{equation}
It remains to show $N'_T W_j \subseteq W_{j-1}$ for all $1 \leq j \leq n-k.$
We first decompose $N_T$ as the sum
\begin{equation}
    N_T = U^{-1}N_TU + U^{-1}(UN_T - N_TU).
\end{equation}
By the definition of $\cB_\mu^\Lambda$, $N_T(V_{n+j}) \subseteq V_{n+j-1}$, so \begin{equation}
U^{-1}N_TU(U^{-1} V_{n+j}) \subseteq U^{-1}N_TU(U^{-1} V_{n+j-1}).
\end{equation}
On the other hand, by~\eqref{eq:Commutativity} the second term has image contained in $U^{-1}V_n \subseteq U^{-1}V_{n+j-1}$. Combining, we see $N_T(U^{-1} V_{n+j}) \subseteq U^{-1} V_{n+j-1}$. This descends to the desired containment $N'_TW_j\subseteq W_{j-1}$.

To show $\Sigma$ is an isomorphism, we similarly define 
\begin{equation}
\Pi: (C_w \cap Y_T) \times \cB_{(s-1)^{n-k}}^{\overline{\Lambda}} \rightarrow \pi^{-1}(C_w\cap Y_T)
\end{equation}
by
\begin{align}
    \Pi(V'_\bullet,W'_\bullet)=\left(V'_1\subseteq\cdots\subseteq V'_n\subseteq U(W'_1+F^{(w)}_n)\subseteq\cdots\subseteq U(W'_{n-k}+F^{(w)}_n) \right),
\end{align}
where $U$ is obtained from $V'_\bullet$ according to Lemma~\ref{newlem:TechnicalLemmaUnipotent}.
A similar calculation to the one above verifies that the image of $\Pi$ is contained in $\pi^{-1}(C_w\cap Y_T)$.  Furthermore, $\Pi$ is regular since the map by which we obtain $U$ from $V'_\bullet$ is regular.  It can be checked that $\Pi$ and $\Sigma$ are mutual inverses, so $\Sigma$ is an isomorphism.

The last statement of the lemma follows trivially from the definition of $\Sigma$.
\end{proof}

\begin{lemma}\label{lem:PiIsSurj}
 The map $\pi$ is surjective. 
\end{lemma}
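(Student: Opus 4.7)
The plan is to combine the affine paving of $Y_{n,\lambda,s}$ from Theorem~\ref{thm:AffinePavingY} with the local product structure supplied by Lemma~\ref{lem:FiberBundle}. By Lemma~\ref{lem:ConjInd} we are free to replace $N_\Lambda$ by $N_T$ for any convenient filling $T$, and I would take $T$ to be the reading order filling of Example~\ref{ex:ReadingOrder}. Since reading order scans the columns of $[\Lambda]$ from right to left, every cell in column $j$ is labeled after, and thus carries a larger label than, every cell in column $j+1$; hence $T$ is not merely Schubert compatible but \emph{strongly} Schubert compatible, so Lemma~\ref{lem:FiberBundle} is applicable.

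By Theorem~\ref{thm:AffinePavingY} the cells $C_w \cap Y_{n,\lambda,s,T}$ for $w$ admissible with respect to $T$ cover $Y_{n,\lambda,s,T}$, so it suffices to check that $\pi$ surjects onto each such cell. Fixing an admissible $w$, Lemma~\ref{lem:FiberBundle} provides an isomorphism
\[
\pi^{-1}(C_w \cap Y_{n,\lambda,s,T}) \;\cong\; (C_w \cap Y_{n,\lambda,s,T}) \times \cB_{(s-1)^{n-k}}^{\overline{\Lambda}}
\]
under which $\pi$ becomes projection onto the first factor. Therefore $\pi$ surjects onto this cell precisely when the Spaltenstein fiber $\cB_{(s-1)^{n-k}}^{\overline{\Lambda}}$ is nonempty.

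The remaining step is to verify this nonemptiness, which is standard. The numerology is consistent: $|\overline{\Lambda}| = K - n = (s-1)(n-k)$ matches the total dimension of the composition $(s-1)^{n-k}$. A concrete point can be produced by choosing any complete flag on $\bC^K/F_n^{(w)}$ invariant under the induced nilpotent $N_T'$---equivalently, a point of the (nonempty) Springer fiber $\cB^{\overline{\Lambda}}$---and retaining only the subspaces whose successive quotients realize the required dimensions $s-1,\,s-1,\,\ldots,\,s-1$. I do not foresee a substantive obstacle here; every ingredient has either been established in the preceding lemmas or is classical, and the only real care is in invoking Lemma~\ref{lem:FiberBundle} against a strongly Schubert compatible filling $T$ rather than an arbitrary one.
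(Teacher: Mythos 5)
Your reduction is exactly the paper's: restrict to an admissible cell $C_w\cap Y_T$, invoke Lemma~\ref{lem:FiberBundle} to identify $\pi^{-1}(C_w\cap Y_T)\cong (C_w\cap Y_T)\times \cB_{(s-1)^{n-k}}^{\overline{\Lambda}}$ compatibly with $\pi$, and conclude that surjectivity is equivalent to nonemptiness of $\cB_{(s-1)^{n-k}}^{\overline{\Lambda}}$. That part is fine.

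The gap is in your final paragraph, where you try to produce a point of $\cB_{(s-1)^{n-k}}^{\overline{\Lambda}}$ by taking a complete $N_T'$-invariant flag $U_\bullet$ of $\bC^K/F_n^{(w)}$ and keeping only $W_i := U_{i(s-1)}$. The Spaltenstein condition is $N_T' W_i\subseteq W_{i-1}$, not merely $N_T' W_i\subseteq W_i$. From the complete flag you only know $N_T' U_{i(s-1)} \subseteq U_{i(s-1)-1}$, and when $s>2$ we have $U_{i(s-1)-1}\supsetneq U_{(i-1)(s-1)}=W_{i-1}$, so the required containment fails in general. (For complete flags the two containments coincide because $N_T'$ is nilpotent, which is why the Springer fiber itself can be described either way; but for partial flags with jumps of size $>1$ the condition $N_T'W_i\subseteq W_{i-1}$ is strictly stronger than $N_T'$-invariance. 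Your construction produces a point of the Steinberg variety $\{W_\bullet : N_T'W_i\subseteq W_i\}$, not of the Spaltenstein variety.) Indeed, Spaltenstein varieties $\cB_\mu^\nu$ can genuinely be empty even when $|\mu|=|\nu|$: e.g.\ $\nu=(2)$, $\mu=(2)$ gives $\cB_{(2)}^{(2)}=\emptyset$, yet the analogous truncation of a Springer-fiber flag ``exists'' in your sense. What's actually needed is Spaltenstein's criterion, which the paper uses: $\cB_\mu^\nu\neq\emptyset$ iff $\mu\leq\nu'$ in dominance order, equivalently $\mu'\geq\nu$. Here $\mu'=(n-k)^{s-1}$ and $\overline{\Lambda}$ has all parts $\leq n-k$ (it records row lengths of what remains of $[\Lambda]$ after deleting $n$ cells including all of $[\lambda]$), so $\mu'\geq\overline{\Lambda}$ and the fiber is nonempty. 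You would need to replace your ``concrete point'' argument with this criterion, or else give a construction that actually builds a flag satisfying $N_T'W_i\subseteq W_{i-1}$ directly from a Jordan basis.
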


\begin{proof}
Let $T$, $w$, and $\overline{\Lambda}$ be as in Lemma~\ref{lem:FiberBundle}. By~\eqref{neweq:FiberBundleIso}, it suffices to show that $\cB_{(s-1)^{n-k}}^{\overline{\Lambda}}\neq \emptyset$. Indeed, Spaltenstein~\cite{Spaltenstein-book} proved that for any partitions $\nu,\mu$ of the same size, $\cB_\mu^\nu$ is nonempty if and only if $\mu\leq \nu'$, where $\leq$ is the dominance order on partitions defined by declaring that $\mu\leq\nu'$ if $\mu_1+\cdots +\mu_i \leq \nu'_1+\cdots + \nu'_i$ for all $i$. It is a standard result that conjugation of partitions reverses dominance order, so $\mu \leq \nu'$ if and only if $\mu' \geq \nu$. In the present setting, we have $\mu = (s-1)^{n-k}$ and $\nu = \overline{\Lambda}$. Since $\overline{\Lambda}_i \leq n-k$ for all $i$, we immediately have $\mu' = (n-k)^{s-1} \geq \overline{\Lambda}$. Thus, $\cB_{(s-1)^{n-k}}^{\overline{\Lambda}} \neq \emptyset$, and $\pi$ is surjective.
\end{proof}

\begin{example}
Let $n$, $\lambda$, $s$, and $T$ be as in Example~\ref{ex:6223unip} and $w=274813$. Note that $T$ is strongly Schubert-compatible.  In this case, $\overline{\Lambda}=(2,2,1,1)$ and $(s-1)^{n-k}=(3,3)$.  We can take as a basis of $\mathbb{C}^K/F^{(w)}_n$ the vectors $f'_5, f'_6, f'_9, f'_{10}, f'_{11}, f'_{12}$, where $f'_i$ denotes the coset $f_i+F^{(w)}_n$.  Then $\cB^{\overline{\Lambda}}_{(s-1)^{n-k}}$ is the Grassmannian of 3-planes in the kernel of $N_T'$, which is spanned by $f'_5, f'_6, f'_{11}, f'_{12}$.

Let our vectors $v_1,\ldots,v_6$, with $v_p\in V_p\setminus V_{p-1}$, be given by the columns of the matrix
\begin{align}\label{eq:VMatrix}
    \begin{bmatrix}
    a & c & d & g & 1 & 0 \\
    1 & 0 & 0 & 0 & 0 & 0 \\
    0 & ab& a & de & 0 & 1 \\
    0 & b & 1 & 0 & 0 & 0 \\
    0 & 0 & 0 & ae & 0 & 0 \\
    0 & 0 & 0 & e & 0 & 0 \\
    0 & 1 & 0 & 0 & 0 & 0 \\
    0 & 0 & 0 & 1 & 0 & 0 \\
    0 & 0 & 0 & 0 & 0 & 0 \\
    0 & 0 & 0 & 0 & 0 & 0 \\
    0 & 0 & 0 & 0 & 0 & 0 \\
    0 & 0 & 0 & 0 & 0 & 0
    \end{bmatrix},
\end{align}
as in~\eqref{eq:CoordinateExampleEq}.

Then the matrix $U$ is given by
\begin{align}
    U=\begin{bmatrix}
    1 & a & 0 & d & 0 & 0 & c & g & 0 & 0 & 0 & 0 \\
    0 & 1 & 0 & 0 & 0 & 0 & 0 & 0 & 0 & 0 & 0 & 0 \\
    0 & 0 & 1 & a & 0 & d & ab& de& 0 & 0 & c & g \\
    0 & 0 & 0 & 1 & 0 & 0 & b & 0 & 0 & 0 & 0 & 0 \\
    0 & 0 & 0 & 0 & 1 & a & 0 & ae& 0 & d & ab& de\\
    0 & 0 & 0 & 0 & 0 & 1 & 0 & e & 0 & 0 & b & 0 \\
    0 & 0 & 0 & 0 & 0 & 0 & 1 & 0 & 0 & 0 & 0 & 0 \\
    0 & 0 & 0 & 0 & 0 & 0 & 0 & 1 & 0 & 0 & 0 & 0 \\
    0 & 0 & 0 & 0 & 0 & 0 & 0 & 0 & 1 & a & 0 & ae \\
    0 & 0 & 0 & 0 & 0 & 0 & 0 & 0 & 0 & 1 & 0 & e \\
    0 & 0 & 0 & 0 & 0 & 0 & 0 & 0 & 0 & 0 & 1 & 0 \\
    0 & 0 & 0 & 0 & 0 & 0 & 0 & 0 & 0 & 0 & 0 & 1
    \end{bmatrix}.
\end{align}

Consider $V'_\bullet\in C_w\cap Y_T$, where $V'_i$ is the span of the first $i$ columns of~\eqref{eq:VMatrix}, and $W'_\bullet\in\mathcal{B}^{(2,2,1,1)}_{(3,3)}$, which is determined entirely by $W'_1$, a 3-dimensional subspace of $f'_5, f'_6, f'_{11}, f'_{12}$. Then the partial flag $\Pi(V'_\bullet,W'_\bullet)$
has $V'_1,\ldots, V'_6$ as its first $n$ parts and $UW'_1$ as its $(n+1)$-th part.

As a specific example, if $W'_1=\vspan\{f'_5, f'_6+2f'_{11}, f'_{12}\}$, then $\Pi(V'_\bullet,W'_\bullet)_7$ would be
\begin{equation}
    V'_6+\vspan\{f'_5, (2c+d)f'_3+(2ab+a)f'_5+(2b+1)f'_6+f'_{11}, gf'_3+def'_5+aef'_9+ef'_{10}+f'_{12} \}.
\end{equation}
\end{example}

\begin{lemma}\label{lem:InjCohomology}
The map on cohomology induced by $\pi$,
\begin{align}
\pi^* : H^*(Y_{n,\la,s}) \to H^*(\cB_\mu^\Lambda),
\end{align}
is injective.
\end{lemma}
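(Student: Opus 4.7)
The plan is to recognize this statement as a direct application of the Relative Affine Paving Lemma (Lemma~\ref{lem:InjectiveCoh}) to the surjection $\pi: \cB_\mu^\Lambda \to Y_{n,\lambda,s}$. Specifically, all the ingredients of that lemma have already been set up in the preceding lemmata: surjectivity comes from Lemma~\ref{lem:PiIsSurj}, the fibered product structure over each Schubert-cell intersection comes from Lemma~\ref{lem:FiberBundle}, and affine pavings of both base and fiber come from Theorem~\ref{thm:AffinePavingY} together with Spaltenstein's classical theorem that Spaltenstein varieties admit affine pavings.

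First, I would fix a strongly Schubert-compatible filling $T$ of $[\Lambda(n,\lambda,s)]$, for instance the reading order filling of Example~\ref{ex:ReadingOrder}: there, the label in column $j$ is automatically larger than every label in column $j+1$, so the strengthened version of property (S4) holds. By Theorem~\ref{thm:AffinePavingY}, the intersections $C_w \cap Y_{n,\lambda,s,T}$ for $w$ admissible with respect to $T$ form an affine paving of $Y_{n,\lambda,s,T}$. Next, for each such cell, Lemma~\ref{lem:FiberBundle} provides an isomorphism
\begin{equation}
\Sigma: \pi^{-1}(C_w \cap Y_{n,\lambda,s,T}) \xrightarrow{\ \cong\ } (C_w \cap Y_{n,\lambda,s,T}) \times \cB_{(s-1)^{n-k}}^{\overline{\Lambda}}
\end{equation}
intertwining $\pi$ with projection onto the first factor, where the Spaltenstein variety $\cB_{(s-1)^{n-k}}^{\overline{\Lambda}}$ is compact, nonempty (as shown in the proof of Lemma~\ref{lem:PiIsSurj}), and admits an affine paving by Spaltenstein's classical construction (as referenced earlier via Spaltenstein~\cite{Spaltenstein-book}, Shimomura~\cite{Shimomura}, or Fresse~\cite{Fresse}).

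Finally, I would observe that $\cB_\mu^\Lambda$ and $Y_{n,\lambda,s,T}$ are both compact complex algebraic varieties, being closed subvarieties of partial flag varieties. Thus all hypotheses of Lemma~\ref{lem:InjectiveCoh} are verified, and the conclusion gives that $\pi^* : H^*(Y_{n,\lambda,s,T}) \to H^*(\cB_\mu^\Lambda)$ is injective. I do not anticipate a significant obstacle: the real content lies in Lemma~\ref{lem:FiberBundle}, which has already been established, and the only small verification is to pick a filling $T$ that is strongly (not merely ordinarily) Schubert-compatible so that Lemma~\ref{lem:FiberBundle} applies, as the reading-order filling does.
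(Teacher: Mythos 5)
Your proposal is correct and follows essentially the same route as the paper: verify the hypotheses of the Relative Affine Paving Lemma using Theorem~\ref{thm:AffinePavingY}, Lemma~\ref{lem:FiberBundle}, Lemma~\ref{lem:PiIsSurj}, and the existence of affine pavings of Spaltenstein varieties. In fact you are slightly more careful than the paper's write-up in explicitly observing that $T$ must be chosen \emph{strongly} Schubert-compatible (as the reading-order filling is) so that Lemma~\ref{lem:FiberBundle} applies.
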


\begin{proof}
It suffices to show that $\pi$ satisfies all of the hypotheses of the Relative Affine Paving Lemma (Lemma~\ref{lem:InjectiveCoh}). Indeed, $\pi$ is a continuous map between compact complex algebraic varieties. By Theorem~\ref{thm:AffinePavingY}, for any Schubert compatible $T$, $Y_{n,\la,s,T}$ is paved by the affines spaces $C_{w}\cap Y_{n,\la,s,T}$ for $w$ admissible. By Lemmas~\ref{lem:FiberBundle} and~\ref{lem:PiIsSurj}, $\pi$ is a surjective map such that $\pi^{-1}(C_{w}\cap Y_{n,\la,s,T}) \cong (C_{w}\cap Y_{n,\la,s,T}) \times \cB_{(s-1)^{n-k}}^{\overline{\Lambda}}$. We apply the Relative Affine Paving Lemma taking the $A_{i,j}$ to be the cells $C_w\cap Y_{n,\la,s,T}$ and the $Z_{i,j}$ to be the Spaltenstein varieties $\cB_{(s-1)^{n-k}}^{\overline{\Lambda}}$, each of which has a paving by affine spaces \cite{Brundan-Ostrik}. We showed in Lemma \ref{newlem:TechnicalLemmaUnipotent} that under the isomorphism above, $\pi$ corresponds to the projection onto the first factor, so our proof is complete.
\end{proof}

Recall that $R_{n,\la,s}$ is a graded ring, where we consider $x_i$ to be in degree $2$. The following is our main theorem, which realizes $R_{n,\la,s}$ as the cohomology ring of the variety $Y_{n,\la,s}$.

\begin{theorem}[Theorem~\ref{thm:MainThmIntro}]\label{thm:MainTheorem}
We have an isomorphism of graded rings
\begin{align}
R_{n,\la,s} \cong H^*(Y_{n,\la,s})
\end{align}
given by sending $x_i$ to $-c_1(\widetilde V_{i}/\widetilde V_{i-1})$.
\end{theorem}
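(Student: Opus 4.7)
The plan is to construct the desired isomorphism by first exhibiting a graded ring surjection $R_{n,\la,s} \twoheadrightarrow H^*(Y_{n,\la,s})$ sending $x_i \mapsto -c_1(\widetilde V_i / \widetilde V_{i-1})$, and then upgrading it to an isomorphism via a rank comparison. The existence of the surjection reduces to showing that every generator of the ideal $I_{n,\la,s} = I_{n,\la} + \langle x_1^s, \ldots, x_n^s\rangle$ maps to zero in $H^*(Y_{n,\la,s})$ under the map $x_i \mapsto -c_1(\widetilde V_i/\widetilde V_{i-1})$.

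The relations $x_i^s = 0$ follow immediately from Theorem~\ref{thm:Surj}, which already provides a ring surjection $\bZ[x_1,\ldots,x_n]/\langle x_1^s,\ldots,x_n^s\rangle \twoheadrightarrow H^*(Y_{n,\la,s})$ identifying each $x_i$ with the same Chern class. It therefore remains to verify that for every subset $S \subseteq \{x_1,\ldots,x_n\}$ and every $d > |S| - \la'_n - \cdots - \la'_{n-|S|+1}$, the elementary symmetric polynomial $e_d(S)$ evaluated at the first Chern classes vanishes in $H^*(Y_{n,\la,s})$. To verify this, I would transfer the statement to the Spaltenstein variety $\cB_\mu^\Lambda$ with $\mu = (1^n, (s-1)^{n-k})$. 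Lemma~\ref{lem:InjCohomology} provides an injection $\pi^* : H^*(Y_{n,\la,s}) \hookrightarrow H^*(\cB_\mu^\Lambda)$, and by naturality of Chern classes the image of $-c_1(\widetilde V_i/\widetilde V_{i-1})$ under $\pi^*$ is the corresponding first Chern class on $\cB_\mu^\Lambda$ for $i \leq n$ (since $\pi$ only forgets the final $n-k$ subspaces of the partial flag). By the Brundan--Ostrik presentation of $H^*(\cB_\mu^\Lambda)$ together with Lemma~\ref{lem:IdealContainment} giving $I_{n,\la} \subseteq I_\mu^\Lambda$, each such $e_d(S)$ already vanishes in $H^*(\cB_\mu^\Lambda)$. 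Injectivity of $\pi^*$ then forces the relation to hold in $H^*(Y_{n,\la,s})$.

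This produces a well-defined graded ring surjection $R_{n,\la,s} \twoheadrightarrow H^*(Y_{n,\la,s})$. To conclude that it is an isomorphism, I would invoke Theorem~\ref{thm:RankGenNLaS}, which shows that both rings are free $\bZ$-modules of finite rank in each degree with identical Hilbert--Poincar\'e series. A graded surjection between two such modules is automatically an isomorphism, since in each degree a surjection $\bZ^r \twoheadrightarrow \bZ^r$ between free abelian groups of the same finite rank has trivial kernel.

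The main (already-resolved) obstacle in this strategy is Lemma~\ref{lem:InjCohomology}, whose proof relies on the fiber-bundle-like structure of $\pi$ over each Schubert cell of $Y_{n,\la,s}$ established in Lemma~\ref{lem:FiberBundle}; this in turn rests on the strongly-Schubert-compatible unipotent change-of-basis of Lemma~\ref{newlem:TechnicalLemmaUnipotent}. With that geometric input and the Brundan--Ostrik presentation of $H^*(\cB_\mu^\Lambda)$ in hand, the remainder of the argument is bookkeeping between the ideals $I_{n,\la}$ and $I_\mu^\Lambda$ together with naturality of Chern classes.
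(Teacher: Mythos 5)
Your proposal reproduces the paper's proof essentially verbatim: a surjection from $\bZ[x_1,\dots,x_n]/\langle x_1^s,\dots,x_n^s\rangle$ via Theorem~\ref{thm:Surj}, vanishing of the $e_d(S)$ relations via the injection $\pi^*$ into $H^*(\cB_\mu^\Lambda)$ (Lemma~\ref{lem:InjCohomology}) combined with Brundan--Ostrik and Lemma~\ref{lem:IdealContainment}, and then upgrading the resulting surjection $R_{n,\la,s}\twoheadrightarrow H^*(Y_{n,\la,s})$ to an isomorphism by the Hilbert series equality of Theorem~\ref{thm:RankGenNLaS}. The steps, the cited lemmas, and the order of the argument all match the paper's proof.
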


\begin{proof}
By Theorem~\ref{thm:Surj}, we have a surjection
\begin{align}\label{eq:MapFromModPowers}
\frac{\bZ[x_1,\dots, x_n]}{\langle x_1^s,\dots, x_n^s\rangle} \twoheadrightarrow H^*(Y_{n,\la,s})
\end{align}
given by sending $x_i$ to $-c_1(\widetilde V_i/\widetilde V_{i-1})$. By Lemma~\ref{lem:IdealContainment}, the cohomology class represented by $e_d(S)$ in $H^*(\cB_\mu^\Lambda)$ for $S\subseteq \{x_1,\dots, x_n\}$ is zero if $d>|S| - p^n_{|S|}(\la)$. By Lemma~\ref{lem:InjCohomology} and naturality of Chern classes, then the cohomology class represented by $e_d(S)$ in $H^*(Y_{n,\la,s})$ is zero as well. Hence, the map \eqref{eq:MapFromModPowers} descends to a map
\begin{align}\label{eq:SurjFromR}
R_{n,\la,s} \twoheadrightarrow H^*(Y_{n,\la,s}).
\end{align}
Since both of these rings are free $\bZ$-modules and have the same Hilbert series by Theorem~\ref{thm:RankGenNLaS}, then \eqref{eq:SurjFromR} is an isomorphism, and the proof is complete.
\end{proof}

By Theorem~\ref{thm:MainTheorem}, we may transfer the action of $S_n$ on $R_{n,\la,s}$ given by permuting the $x_i$ variables to an action of $S_n$ on $H^*(Y_{n,\la,s})$ by permuting the first Chern classes $-c_1(\widetilde V_i/\widetilde V_{i-1})$, making the isomorphism in Theorem~\ref{thm:MainTheorem} an isomorphism of graded $S_n$-modules. It is well known that in the case $n=k$, this action on the cohomology ring of a Springer fiber coincides with Springer's representation tensored with the sign representation.

It is also well known that 
\begin{align}
H^*(\Fl_{(1^n)})(\bC^K)) \cong \left(H^*(\Fl(\bC^K))\right)^{S_1\times \cdots \times S_1\times S_{(n-k)(s-1)}}.
\end{align}
Therefore, there is an action of $S_n$ on $H^*(\Fl_{(1^n)}(\bC^K))$ given by permuting the corresponding first Chern classes on $\Fl_{(1^n)}(\bC^K)$. By naturality of Chern classes, the $S_n$-module structure on $H^*(Y_{n,\la,s})$ is the unique one that makes the surjective map on cohomology
\begin{align}
  H^*(\Fl_{(1^n)}(\bC^K)) \twoheadrightarrow H^*(Y_{n,\la,s})
\end{align}
into an $S_n$-equivariant map. Furthermore, the diagram~\eqref{eq:SnDiagram} of graded $S_n$-modules commutes.

\begin{remark}\label{rmk:s=2}
In the special case where $s=2$, we have $\mu=(1^n,1^{n-k}) = (1^{2n-k})$, so the Spaltenstein variety $\mathcal{B^\nu_\mu}$ that is the source of our map $\pi$ is the Springer fiber for the partition $\Lambda$.  Furthermore, when $w$ is an injective map corresponding to a cell of maximal dimension in $Y_T$, the fibers of the map $\pi$ over $C_w\cap Y_{n,\lambda,s,T}$ as stated in Lemma~\ref{lem:FiberBundle} are points, since $\overline{\Lambda}=(n-k)$ and the Springer fiber for a one row partition is a point.  Hence, the map $\pi$ will be a birational map from certain components of the Springer fiber $\mathcal{B}^\Lambda$ to $Y_{n,\lambda, s,T}$, with other components being contracted in some way.  Note $\Lambda$ has two rows, and 2 row Springer fibers have been extensively studied and their components have explicit descriptions as iterated projective space bundles~\cite{Fung}.  Other papers on 2 row Springer fibers include~\cite{Russell,Khovanov}.

In the special case where $s=2$ and $\lambda=\emptyset$ (of which Example~\ref{ex:Hirzebruch} is the $n=2$ case), $Y_{n,\emptyset,2}$ has one irreducible component, with the unique cell of maximal dimension given by the injective map $w$ that takes the labels of the bottom row in order.  In this case, the endomorphism $N_T|_{V_i}$ of $V_i$ has Jordan type $(i)$, so $\pi$ induces a birational map from the component of the Springer fiber indexed by the Standard Young Tableau with $1,\ldots,n$ in the first row and $n+1,\ldots, 2n$ in the second to $Y_{n,\emptyset,2}$.
\end{remark}

\section{Irreducible components and a generalization of the Springer correspondence}\label{sec:IrreducibleComponents}

In this section, we characterize the irreducible components of $Y_{n,\la,s}$ and show that the number of irreducible components is equal to $\binom{n}{k}\cdot\#\SYT(\la)$ when $s>\ell(\la)$. We then prove that this fact extends to a representation-theoretic statement by giving a generalization of the Springer correspondence to the setting of induced Specht modules.

\subsection{Irreducible components of the \texorpdfstring{$\Delta$}{Delta}-Springer variety}
Fix $k\leq n$, $\la\vdash k$, and $s\geq \ell(\la)$, and set $\Lambda = \Lambda(n,\la,s)$. Given a subspace $W\subseteq \bC^K$ such that $N_\Lambda W\subseteq W$, then $N_\Lambda(W\cap F_k)\subseteq W\cap F_k$. The nilpotent operator $N_\Lambda$ thus induces a nilpotent operator
\begin{align}
    N_\Lambda[W] : F_k/(W\cap F_k) \to F_k/(W\cap F_k).
\end{align}

Let $\mathcal{P}(n,\lambda)$ be the set of fillings of $[\lambda]$ with distinct entries from $[n]$ that decrease left to right along each row and down each column. Note that when $s>\ell(\la)$, 
\begin{equation}
|\mathcal{P}(n,\la)| = \binom{n}{k}\cdot \#\SYT(\la).
\end{equation}
This can be seen explicitly by reversing the labels (via $i \mapsto n-i+1$) to get a row and column increasing tableau on $[\lambda]$ with labels from a $k$-element subset of $[n]$.

Given $S\in \mathcal{P}(n,\la)$, let $S^{[i]}$ be the restriction of $S$ to the cells that contain a label from the set $\{n-i+1,\dots,n-1,n\}$. Furthermore, let $\mathrm{sh}(S^{[i]})$ be the partition obtained by recording the row sizes of $S^{[i]}$ in order from largest to smallest.

For $S\in \mathcal{P}(n,\la)$, define the following locally closed subvariety of $Y_{n,\la,s}$,
\begin{align}
Y_{n,\la,s}^{S} = \{V_\bullet \in Y_{n,\la,s,T} \st N_\Lambda[V_i]\text{ has Jordan type }\mathrm{sh}(S^{[i]})\text{ for all }i\}.
\end{align}
Observe that the definition of $Y_{n,\la,s}^S$ makes sense for any nilpotent operator $N_\Lambda$.  

For the rest of this section, we fix a Schubert-compatible filling $T$ of $[\Lambda]$ and work specifically with $Y_T$.  As the Jordan type of $N_\Lambda[V_i]$ is preserved by conjugation, an argument similar to that of Lemma~\ref{lem:ConjInd} shows that all the statements of this section (except for Lemma~ \ref{lem:IrredSubspaceUnion}, which refers specifically to $T$) hold for arbitrary choices of $N_\Lambda$.

Now that a Schubert compatible $T$ is fixed, given an admissible $w$, let $S(w)$ be the restriction of $\operatorname{IPRD}_T(w)$ to $[\lambda]$. Note that $S(w)$ also depends on $T$, but we suppress this in the notation for convenience. See Figure~\ref{fig:SOfw} for an example of $S(w)$. 

We say two fillings $S_1$ and $S_2$ of $[\la]$ are \textbf{column equivalent} if for each $i$, the set of labels in the $i$-th column of $S_1$ is equal to the set of labels in the $i$-th column of $S_2$.
Equivalently, $S_1$ and $S_2$ are column equivalent if and only if $\sh(S_1^{[i]}) = \sh(S_2^{[ i]})$ for all $i$.

\begin{figure}
  \centering
  \includegraphics[scale=0.45]{Figures/SOfw.pdf}
  \caption{Above, an example of $S(w)$ when $T$ is as shown, where $n=5$, $\la=(2,1)$, $s=3$, and $w=16243$. Below, the partitions $S^{[i]}$ for $1\leq i \leq 5$. \label{fig:SOfw}}
\end{figure}

\begin{lemma}\label{lem:IrredSubspaceUnion}
  For $S\in \mathcal{P}(n,\la)$, we have
  \begin{align}
    Y_{n,\la,s,T}^S = \bigsqcup_{\substack{w\text{ admissible},\\ S(w) \text{ column equivalent to }S}} C_w \cap Y_{n,\la,s,T}.
  \end{align}
  Furthermore, for every admissible $w$, the cell $C_w\cap Y_{n,\la,s,T}$ is contained in $Y_{n,\la,s,T}^S$ for some unique $S \in \mathcal{P}(n,\la)$.
\end{lemma}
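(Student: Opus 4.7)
The plan is to establish both assertions of Lemma~\ref{lem:IrredSubspaceUnion} simultaneously by giving a combinatorial description of the Jordan type sequence of $N_\Lambda[V_i]$ that depends only on $w$ (for the uniqueness part) and in fact only on the column-equivalence class of $S(w)$ (for the identification part). For $V_\bullet \in C_w \cap Y_T$ with admissible $w$, the key object will be the ``leading-term'' subspace $U_i \coloneqq \vspan\{f_{w(p)} : p \leq i,\, w(p) \leq k\}$, which depends only on $w$ and is $N_\Lambda$-invariant by the admissibility condition (A2). In terms of $S(w)$, the generators of $U_i$ are exactly the basis vectors $f_{T(a,b)}$ for the cells $(a,b) \in [\lambda]$ bearing a label at most $i$ in $S(w)$.

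The main technical step is to show that the induced action of $N_\Lambda$ on $F_k/(V_i \cap F_k)$ is isomorphic, as an $N_\Lambda$-module, to its counterpart on $F_k/U_i$. Once established, this makes the Jordan type of $N_\Lambda[V_i]$ constant on the cell, so $C_w \cap Y_T \subseteq Y^S$ for a unique $S$. I plan to construct this isomorphism inductively in $i$ using the standard echelon basis $v_p = f_{w(p)} + \sum_h \alpha_{h,p} f_h$ from~\eqref{eq:SchubCellCoords}, at each step using (A2) to verify that the ``lower-order'' terms in $v_p$ vanish modulo $V_i \cap F_k$ in a way compatible with the corresponding vanishing in $F_k/U_i$. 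Alternatively, one may combine semi-continuity of Jordan type on the affine cell with the fact that at the basepoint flag ($\alpha_{h,p}=0$) one has $V_i \cap F_k = U_i$ literally, and rule out type jumps by a dimension count tied to the structure of the Jordan chains.

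For the identification of $S$, the Jordan type of $F_k/U_i$ is readily computed: each Jordan chain of $N_\Lambda|_{F_k}$ corresponds to a single row of $[\lambda]$, and modding out by $U_i$ removes precisely the cells of that row whose label in $S(w)$ is at most $i$. The resulting Jordan type is therefore the sorted tuple of row lengths of the surviving cells of $[\lambda]$, a partition determined by its conjugate (i.e., the column lengths), which is visibly column-equivalence invariant. Each column-equivalence class of IPRDs contains a unique $\mathcal{P}$-tableau $S$, obtained by sorting the labels within each column in decreasing order, and I would verify directly that the row-decreasing condition holds automatically and that $\sh(S^{[i]})$ agrees with the computed Jordan type sequence.

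The chief obstacle is the quotient isomorphism $F_k/(V_i \cap F_k) \cong F_k/U_i$: the submodules $V_i \cap F_k$ and $U_i$ can genuinely have different Jordan types (small examples exhibit this), so the naive ``leading-term'' map $v_p \mapsto f_{w(p)}$ is not $N_\Lambda$-equivariant, and the argument must descend carefully to the level of quotients. Making this precise will likely require exploiting the row-chain structure imposed by (A2) together with an inductive unipotent change of basis that matches the two quotients chain-by-chain.
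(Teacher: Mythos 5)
Your overall reduction---that the Jordan type of $N_\Lambda[V_i]$ should be constant along each cell $C_w\cap Y_T$ and should agree with its value at the coordinate flag $F^{(w)}_\bullet$, so that the lemma collapses to a combinatorial identification at the basepoint---is a sound idea and a genuinely different route from the paper's. The paper instead inducts on $n$ using the cell-peeling isomorphism $\Phi$ of Lemma~\ref{lem:CellRecursion}: it exhibits a commutative diagram in which the $N_T$-commuting unipotent $U_{i,v}$ and the flattening map $\psi^{(i)}$ intertwine the induced nilpotents, giving $\operatorname{JT}(N_T[V_j]) = \operatorname{JT}(N_{T^{(i)}}[V'_{j-1}])$, and then invokes the inductive hypothesis on $Y_{T^{(i)}}$. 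Your plan works one $i$ at a time and avoids that recursion.

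However, you leave the central step unproved, and you acknowledge it yourself: you do not actually show that $F_k/(F_k\cap V_i)$ and $F_k/U_i$ have the same Jordan type. The ``inductive unipotent change of basis chain-by-chain'' is only a sketch, and ``semi-continuity plus a dimension count'' is too weak---semi-continuity over the irreducible cell pins down the generic Jordan type but does not by itself rule out jumps at special points. The gap is real, but it can be closed cleanly without building any $N_\Lambda$-module isomorphism at all. The Jordan type of $N_T$ on $F_k/(F_k\cap V_i)$ is determined by the ranks
\begin{equation}
\rk\big(N_T^{\,j}\big|_{F_k/(F_k\cap V_i)}\big)
= \dim\!\big(N_T^{\,j}F_k\big) - \dim\!\big(N_T^{\,j}F_k\cap V_i\big), \qquad j\geq 1,
\end{equation}
and conditions (S2) and (S4) of Schubert-compatibility force each $N_T^{\,j}F_k$ to be a coordinate subspace $F_{m_j}$ (it is the span of the cells of $[\lambda]$ in columns $\geq n-k+1+j$, whose $T$-labels form an initial segment of $[k]$). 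Hence $\dim(F_{m_j}\cap V_i) = \#\{p\leq i: w(p)\leq m_j\}$ is fixed on $C_w$ by the Schubert cell condition~\eqref{eqn:SchubCellDef}, so the Jordan type is constant on the cell and equals its value at $F^{(w)}_\bullet$. With that in hand, your basepoint computation and the column-equivalence discussion would complete the argument; you should supply this (or an equivalent) justification before the proof can be considered complete.
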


  \begin{proof}
    For $w$ admissible, it can be checked that the filling $S$ obtained by sorting each column of $S(w)$ is in $\mathcal{P}(n,\la)$.  Hence $S(w)$ is column equivalent to a unique element of $\mathcal{P}(n,\la)$. Since the cells $C_w\cap Y_{n,\la,s,T}$ partition $Y_{n,\la,s,T}$, it suffices to show the containment $C_w\cap Y_{n,\la,s,T}\subseteq Y_{n,\la,s,T}^S$.

    We proceed by induction on $n$ to show that $C_w\cap Y_{n,\la,s,T}\subseteq Y_{n,\la,s,T}^S$. In the base case $n=0$, the cell $C_w\cap Y_{n,\la,s,T} = Y_{n,\la,s,T}^S$ is a point, where $w$ is the unique injective map between empty sets and $S$ is the empty filling. Let us assume that $n>0$ and the containment holds for $n-1$.

    Let $i\leq s$ such that $w(1) = T(i,\Lambda_i)$, and let $V_\bullet\in C_w\cap Y_T$. Recall from the proof of Lemma~\ref{lem:CellRecursion} that we have an isomorphism
    \begin{equation}
      \Phi : \vspan\{f_{T(h,\Lambda_h)}\st h<i\}\times (C_{\fl_T^{(i)}(w)}\cap Y_{T^{(i)}}) \to C_w \cap Y_T,
    \end{equation}
      with inverse defined by $\Phi^{-1}(V_\bullet) = (v,V_\bullet')$, where $v$ is a vector that spans $V_1$ and
    \begin{equation}
      V_\bullet' = (\psi^{(i)}U^{-1}_{i,v}(V_2)\subseteq\cdots\subseteq\psi^{(i)}U^{-1}_{i,v}(V_n)).
    \end{equation}
   If $i>\ell(\la)$, let $\hat{S}$ be the unique element of $\mathcal{P}(n-1,\la)$ column equivalent to $S(\fl^{(i)}_{T}(w))$; if $\leq \ell(\la)$, let $\hat{S}$ be the unique element of $\mathcal{P}(n-1,\la^{(i)})$ column equivalent to $S(\fl^{(i)}_{T}(w))$.
   By induction, $N_{T^{(i)}}[V_j']$ has Jordan type $\sh(\hat{S}^{[j]})$.
   
     In order to finish the proof, we establish the following string of identities,  where $\operatorname{JT}(-)$ stands for Jordan type,
      \begin{align}
          \operatorname{JT}(N_T[V_j]) &= \operatorname{JT}(N_T[U_{i,v}^{-1}(V_j)])\label{eq:firstJT}\\
          &= \operatorname{JT}(N_{T^{(i)}}[V_{j-1}'])\label{eq:secondJT}\\
          &= \sh(\hat{S}^{[j-1]})\label{eq:thirdJT}\\ \label{eq:fourthJT}
          &= \sh(S^{[j]}).
      \end{align}
      Consider the diagram
      \begin{equation}
          \raisebox{3em}{
          \xymatrix@C=4em{
          \dfrac{F_k}{F_k \cap V_j} \ar[r]^-{U_{i,v}^{-1}} \ar[d]^{N_T[V_j]} &
          \dfrac{F_k}{F_k \cap U_{i,v}^{-1} (V_j)} \ar[r]^-{\psi^{(i)}} \ar[d]^{N_T[U_{i,v}^{-1}(V_j)]} &
          \dfrac{\psi^{(i)}(F_k)}{\psi^{(i)}(F_k) \cap V'_{j-1}}
          \ar[d]^{N_{T^{(i)}}[V'_{j-1}]} \\
          \dfrac{F_k}{F_k \cap V_j} \ar[r]^-{U_{i,v}^{-1}} &
          \dfrac{F_k}{F_k \cap U_{i,v}^{-1} (V_j)} \ar[r]^-{\psi^{(i)}} &
          \dfrac{\psi^{(i)}(F_k)}{\psi^{(i)}(F_k) \cap V'_{j-1}}.
          }}
      \end{equation}
      Observe that $U_{i,v}$ is invertible and preserves $F_k$, while  $\psi^{(i)}$ induces an isomorphism of the given quotient spaces (which follows by considering the cases $i > \ell(\lambda)$ and $i \leq \ell(\lambda)$ separately). It is straightforward to check that both squares of the diagram commute.
      Since the rows of the diagram are isomorphisms, the maps in the columns have the same Jordan type; hence~\eqref{eq:firstJT} and \eqref{eq:secondJT} hold. Identity~\eqref{eq:thirdJT} then follows by induction, and \eqref{eq:fourthJT} follows by column equivalence.
    \end{proof}

    \begin{lemma}\label{lem:IrreducibleNonempty}
      If $s>\ell(\la)$, then $Y^S_T$ is nonempty for any $S\in \mathcal{P}(n,\la)$.  If $s=\ell(\la)$, given $S\in\mathcal{P}(n,\la)$, let $i_S$ be the smallest index such that $i_S$ is not an entry in $S$.  
      Then $Y_T^S$ is nonempty if and only if the subdiagram of $[\lambda]$ consisting of the cells of $S$ labeled by $1,\dots, i_S-1$ contains the entire last row of $[\lambda]$.
    \end{lemma}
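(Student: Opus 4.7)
The plan is to combine Lemma~\ref{lem:IrredSubspaceUnion} with the bijection of Corollary~\ref{cor:AffinePavingFillings} between admissible maps $w$ and injective partial row-decreasing fillings (IPRDs) of $[\Lambda]$: together, these reduce the question $Y_T^S \neq \emptyset$ to whether there exists an IPRD whose restriction to $[\la]$ is column-equivalent to $S$. I write $L \subseteq [n]$ for the label set of $S$ (so $|L|=k$), and $L^c := [n] \setminus L$ (so $|L^c|=n-k$). The key structural observation to exploit is that within any row $a$ of an IPRD, labels decrease left-to-right while the $[\la]$-cells of that row are its rightmost $\la_a$ cells; consequently, the labels landing in the $[\la]$-cells of row $a$ are exactly the $\la_a$ smallest labels assigned to row $a$ overall. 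Constructing the desired IPRD therefore amounts to distributing $[n]$ among the rows of $[\Lambda]$ so that, for every $a \le \ell(\la)$, all $L^c$-labels in row $a$ exceed all $L$-labels in row $a$, and the induced tableau on $[\la]$ is column-equivalent to $S$.

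For sufficiency, I would place row $a$ of $S$ directly into the $[\la]$-cells of row $a$ for each $a \le \ell(\la)$ (automatically row-decreasing since $S$ is), and then choose where to deposit the $L^c$-labels. If $s > \ell(\la)$, I put all of $L^c$ in the free row $\ell(\la)+1$, whose length is exactly $n-k = |L^c|$; this trivially yields a valid IPRD with $S(w) = S$, which is column-equivalent to $S$. If $s = \ell(\la)$ and some row $a^{**}$ of $[\la]$ has all entries of $S$ contained in $[i_S - 1]$, then I put all of $L^c$ into the extras of row $a^{**}$. The resulting row $a^{**}$ of the IPRD reads (decreasing left-to-right) the labels of $L^c$ followed by the labels of row $a^{**}$ of $S$; since $\min(L^c) = i_S > \max(\text{row }a^{**}\text{ of }S)$, the overall row remains strictly decreasing, so this IPRD is valid and again produces $S(w)=S$.

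For necessity when $s = \ell(\la)$, I start from an admissible $w$ with $S(w)$ column-equivalent to $S$ and show that $[i_S - 1]$ fills the bottom row $\ell(\la)$ of $[\la]$ in $S$. Since $i_S \in L^c$, the label $i_S$ occupies an extra cell of some row $a^* \le \ell(\la)$; all cells strictly to its right in row $a^*$ lie in $[\la]$ and, by row-decrease, receive labels smaller than $i_S$, so row $a^*$ of $S(w)$ has all $\la_{a^*}$ entries in $[i_S - 1]$. I then pass to $S$ column by column: for each column $c \in \{1,\ldots,\la_{a^*}\}$ of $[\la]$, $S(w)$ places an element of $[i_S - 1]$ in column $c + n - k$, so by column-equivalence $S$ does as well; the column-decreasing structure of $S$ then forces the bottom cell $(\la'_c, c + n - k)$ to carry the minimum entry of that column, which is at most $i_S - 1$. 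Because $\la_{\ell(\la)} \le \la_{a^*}$, every cell of row $\ell(\la)$ of $S$ sits at the bottom of its column, so all entries of row $\ell(\la)$ of $S$ lie in $[i_S - 1]$, completing the argument. The main technical point is exactly this last transfer: the row $a^*$ of $S(w)$ carrying small entries need not coincide with any row of $S$, so I must exploit the combination of column-equivalence and the column-strict structure of $\mathcal{P}(n,\la)$ to conclude that the bottom row of $S$ itself has the required property.
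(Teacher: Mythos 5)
Your proof is correct and takes the same approach as the paper: the paper's own proof simply asserts ``it can be checked that there exists an admissible $w$ that is column equivalent to $S$ if and only if the conditions in the statement of the lemma hold'' and appeals to Lemma~\ref{lem:IrredSubspaceUnion}, which is exactly the reduction you carry out. Your write-up just fills in the details of the ``it can be checked'' step, including the useful observation that the ``at least one row'' condition is equivalent to the bottom-row condition via the column-strictness of $S$.
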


    \begin{proof}
        It can be checked that there exists an admissible $w$ such that $S(w)$ is column equivalent to $S$ if and only if the conditions in the statement of the lemma hold. In the case $s=\ell(\la)$, the extra condition that the subdiagram of $[\lambda]$ consisting of the cells of $S$ labeled by $1,\dots, i_S-1$ contains the entire last row of $[\lambda]$ corresponds to the fact that if $w$ is admissible, then the smallest label $i$ not in $S(w)$ must be greater than the smallest entry in the first column of $S(w)$, which means that the last row of the unique element of $\mathcal{P}(n,\la)$ column equivalent to $S(w)$ must be filled with entries smaller than the label $i$. Therefore, by Lemma~\ref{lem:IrredSubspaceUnion}, these are exactly the conditions for when $Y_T^S$ is nonempty.
    \end{proof}

    \begin{lemma}\label{lem:IrreducibleDimension}
      Let $S\in \mathcal{P}(n,\la)$. When $Y_T^S$ is nonempty, it is smooth and connected of dimension $n(\la) + (n-k)(s-1)$.
    \end{lemma}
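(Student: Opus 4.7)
The plan is to prove the statement by induction on $n$, with the base case $n=0$ being immediate since $Y_T^\emptyset$ is a single point. For the inductive step, fix $S\in \mathcal{P}(n,\lambda)$ with $Y_T^S$ nonempty, and define a distinguished index $i^*=i^*(S)$: set $i^*=\max\{h : \lambda_h = c\}$ if $1\in S$ at column $c$ (Case A), and otherwise set $i^*=s$ (Case B). The key idea is to realize $Y_T^S$ as a locally trivial fiber bundle over a smooth connected base $B$ which is an open subset of $\bP^{i^*-1}$, with fiber a smaller stratum $Y_{T^{(i^*)}}^{\hat{S}}$ that is smooth and connected by the inductive hypothesis.

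More concretely, the base $B$ will parameterize the $1$-dimensional subspace $V_1\subseteq \vspan\{f_{T(h,\Lambda_h)} : h\leq i^*\}\subseteq \ker(N_\Lambda)$, and $\hat{S}$ is the reverse tableau obtained from $S$ by deleting the smallest label and reducing the shape to $\lambda^{(i^*)}$ (respectively, to $\lambda$ in Case B). By Lemma~\ref{lem:IrredSubspaceUnion}, $Y_T^S$ decomposes into cells indexed by admissible $w$ with $S(w)$ column-equivalent to $S$, and for each such cell with $w(1)=T(i,\Lambda_i)$, the map $\Phi_i$ from Remark~\ref{rmk:CombinedIso} furnishes a chart $\bC^{i-1}\times Y_{T^{(i)}}^{\hat{S}_i}\to Y_T^S$ covering the locus where $V_1$ has leading coordinate in row $i$. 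A combinatorial check should confirm that every valid $i$ satisfies $i\leq i^*$ and that $\lambda^{(i)}=\lambda^{(i^*)}$ (since all such $i$ remove a cell from a row of the same length $c$ and the resulting partition is identical after reordering); hence by Lemma~\ref{lem:ConjInd} the various fibers $Y_{T^{(i)}}^{\hat{S}_i}$ are all canonically isomorphic to a single stratum $Y_{T^{(i^*)}}^{\hat{S}}$.

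Once the fiber bundle structure is in hand, the dimension calculation
\begin{equation*}
\dim Y_T^S = (i^*-1) + \dim Y_{T^{(i^*)}}^{\hat{S}} = (i^*-1) + n(\lambda^{(i^*)}) + (n-k)(s-1) = n(\lambda) + (n-k)(s-1)
\end{equation*}
follows from the inductive hypothesis combined with the identity $n(\lambda) - n(\lambda^{(i^*)}) = \lambda'_{\lambda_{i^*}}-1 = i^*-1$, which holds because $i^*$ is by construction the last row of $\lambda$ of length $\lambda_{i^*}$ (Case B is analogous, using $\lambda^{(s)}=\lambda$). Smoothness and connectedness of $Y_T^S$ then follow from those of the base $B$ (an open subset of projective space) and of the fiber (by induction).

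The main obstacle is verifying that the $\Phi_i$ charts actually glue into a genuine fiber bundle over $B$ rather than merely yielding a cell-by-cell decomposition: the images of $\Phi_i$ for distinct $i\leq i^*$ are pairwise disjoint locally closed subsets of $Y_T^S$, so one needs a different cover by overlapping local trivializations. The strategy is to combine $\Phi_{i^*}$ with explicit algebraic families of unipotent changes of basis, in the style of Lemma~\ref{lem:InvertibleTransf}, that move a reference flag $V_\bullet^*\in \Phi_{i^*}\text{-image}$ to a flag with arbitrary $V_1\in B$; the fact that these unipotent transformations commute with $N_\Lambda$ modulo a controlled error (as in the proof of Lemma~\ref{newlem:TechnicalLemmaUnipotent}) is what ultimately identifies the fiber coherently across the lower-dimensional strata.
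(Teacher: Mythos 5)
Your high-level structure matches the paper: induction on $n$, the same distinguished index $i^*$ (your definition $\max\{h : \lambda_h=c\}$ where $1$ sits in column $c$ of $S$ coincides with the paper's choice of the row containing the label $1$, since that label must occupy a corner of $[\lambda]$, so its row is the last of its length), the same reduction $\hat{S}$, and the same dimension identity $n(\lambda)-n(\lambda^{(i^*)}) = i^*-1$. You have also correctly identified the key obstacle: the images of the maps $\Phi_i$ for varying $i$ are pairwise disjoint locally closed strata, not overlapping charts.

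However, your proposed fix does not work as stated, and it is also more than is needed. You suggest trivializing $\pi$ by families of unipotent changes of basis that ``move a reference flag \ldots\ to a flag with arbitrary $V_1\in B$.'' The transformations $U_{i,v}$ of Lemma~\ref{lem:InvertibleTransf} are unipotent and upper-triangular, so they preserve every Schubert cell $C_w$, and in particular they preserve the row of $T$ containing the leading coordinate of $V_1$. Consequently no unipotent transformation of this kind can move $V_1$ from the locus $U$ where its leading coordinate is $f_{T(i^*,\Lambda_{i^*})}$ to a different leading-coordinate locus $U_p$ with $p\neq i^*$, which is exactly what your gluing requires.

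The paper's resolution is different and cleaner. It defines $\pi(V_\bullet)=V_1$ with target an open subset of $\bP^{i^*-1}$, and observes via Remark~\ref{rmk:CombinedIso} that $\pi^{-1}(U)\cong U\times Y_{T^{(i^*)}}^{\hat S}$, which by induction is smooth, connected, of the correct dimension. To cover the rest of $Y_T^S$, the paper does \emph{not} build a bundle: it takes the remaining open sets $U_p$ (for $\Lambda'_{j+1}+1\le p\le i^*$) covering the target of $\pi$, and shows $\pi^{-1}(U_p)\cong\pi^{-1}(U)$ by conjugating $N_T$ by a \emph{permutation} of the rows of $T$ (swapping rows $p$ and $i^*$, both of which have the same length, so the Jordan-type stratification is preserved, as noted in the paragraph preceding Lemma~\ref{lem:IrredSubspaceUnion}). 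Since smoothness and dimension are local properties, and the opens $\pi^{-1}(U_p)$ pairwise overlap, one immediately concludes smoothness, equidimensionality, and connectedness without ever exhibiting a global fiber-bundle structure. In short: replace the vague unipotent gluing by row-permutation symmetry and an open-cover argument, and abandon the stronger fiber-bundle claim, which is neither established nor required.
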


    \begin{proof}
      Let $S\in \mathcal{P}(n,\la)$ be an element satisfying the conditions in the statement of Lemma~\ref{lem:IrreducibleNonempty}. Let $\hat{S}$ be the result of subtracting $1$ from each entry of $S^{[n]}$. Specifically, if $S$ contains the label $1$, and $(i,j)$ is the cell of $[\la]$ containing it, then $\hat{S}\in \mathcal{P}(n-1,\la^{(i)})$ is the result of deleting the cell $(i,j)$ of $S$ and then subtracting $1$ from each entry. Otherwise, let $(i,j) = (s,n-k)$, and  $\hat{S}\in \mathcal{P}(n-1,\la)$ is the result of subtracting $1$ from each entry of $S$. It can be checked that $\hat{S}$ satisfies the conditions in the statement of Lemma~\ref{lem:IrreducibleNonempty} with respect to $T^{(i)}$, so $Y_{T^{(i)}}^{\hat{S}}$ is nonempty.
      
      We have a surjective map $\pi$,
      \begin{align}
        Y_T^S \xrightarrow{\pi} \bP(\vspan\{f_{T(h,\Lambda_h)} \st h\leq i\}) \setminus \bP(\vspan\{f_{T(h,\Lambda_h)}\st h\leq \Lambda_{j+1}'\}),
      \end{align}
      defined by sending $V_\bullet$ to $V_1$. Let
      \begin{align}
        U \coloneqq \bP(\vspan\{f_{T(h,\Lambda_h)} \st h\leq i\}) \setminus \bP(\vspan\{f_{T(h,\Lambda_h)}\st h < i\}).
      \end{align}
      
      We prove the lemma by induction on $n$. In the base case $n=0$, we have $k=0$ and $\la = \emptyset$. Therefore, there is a unique $Y_T^S$ which is a single point, which agrees with the dimension formula since $n(\la)+(n-k)(s-1)=0$ in this case, so the base case holds. In the inductive step, let $n>0$ and assume by induction on $n$ that $Y_{T^{(i)}}^{\hat{S}}$ is smooth and connected with dimension $n(\la^{(i)})+(n-k)(s-1)$ for $i\leq \ell(\la)$ and $n(\la)+(n-k-1)(s-1)$ for $i>\ell(\la)$.  By  Lemma~\ref{lem:IrredSubspaceUnion}, 
      \begin{align}
          \pi^{-1}(U)=\bigsqcup_{w(1)=i} C_w\cap Y_T,
      \end{align}
      so by Remark~\ref{rmk:CombinedIso}
      there is a combined isomorphism
      \begin{align}
        \Phi : U\times Y_{T^{(i)}}^{\hat{S}} \to \pi^{-1}(U).
      \end{align}
      Thus, by the inductive hypothesis, the open subset $\pi^{-1}(U)$ of $Y_T^S$ is smooth and connected. Its dimension is
      \begin{align}
        \dim(U) + \dim(Y_{T^{(i)}}^{\hat{S}}) &= (i-1) + \begin{cases} n(\la^{(i)}) + (n-k)(s-1) & i\leq \ell(\la)\\ n(\la) + (n-k-1)(s-1) & i>\ell(\la)\end{cases}\\
        &= n(\la) + (n-k)(s-1).
      \end{align}
      
      Given some $p$ with $\Lambda_{j+1}'+1\leq p \leq i$, we have an open subset
      \begin{align}
        U_{p} = \bP(\vspan\{f_{T(h,\Lambda_h)}\st h\leq i\})\setminus \bP(\vspan\{f_{T(h,\Lambda_h)}\st h\leq i, h\neq p\})
      \end{align}
      of the codomain of $\pi$. By applying a change of basis corresponding to a permutation of the rows of $T$, we see that $U\cong U_p$ and $\pi^{-1}(U) \cong \pi^{-1}(U_p)$. Thus, $\pi^{-1}(U_p)$ is smooth and connected of the same dimension for all $p$. Since $Y_T^S$ is covered by smooth connected open subsets of dimension $n(\la)+(n-k)(s-1)$, it follows that $Y_T^S$ is smooth of dimension $n(\la)+(n-k)(s-1)$. Since each $\pi^{-1}(U_{p})$ is connected and the intersection of these subspaces is nonempty, $Y_T^S$ is connected.
    \end{proof}
    
  \begin{example}
    Let $n=4$, $\la = (2,1)$, $s=3$, and
    \[
    \includegraphics[scale=0.45]{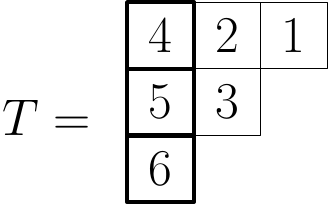}.
    \]
    Then $Y_{n,\la,s,T}$ decomposes into the following smooth connected subspaces of dimension $n(\la) + (n-k)(s-1)=3$, where $C'_w$ stands for the cell $C_w \cap Y_{n,\la,s,T}$.
    \begin{alignat*}{4}
        Y_T^{\scriptsize\young(21,3)} &= C'_{1234}\cup C'_{1235}\cup C'_{1236}\cup C'_{1324}\cup C'_{1325}\cup C'_{1326}, &\,\,&& Y_T^{\scriptsize\young(42,1)} &= C'_{3152} \cup C'_{3162},\\
        Y_T^{\scriptsize\young(21,4)} &=C'_{1243} \cup C'_{1263} \cup C'_{1352}\cup C'_{1362}, &&& Y_T^{\scriptsize\young(43,1)} &= C'_{3512} \cup C'_{3612},\\
        Y_T^{\scriptsize\young(41,3)} &= C'_{1623} \cup C'_{1632}, &&& Y_T^{\scriptsize\young(42,3)} &= C'_{6123} \cup C'_{6132},\\
        Y_T^{\scriptsize\young(32,1)} &= C'_{3124} \cup C'_{3125} \cup C'_{3126}, &&& Y_T^{\scriptsize\young(43,2)} &= C'_{6312}.
    \end{alignat*}
    \end{example}

\begin{theorem}[Theorem~\ref{thm:DimensionOfYIntro}] \label{thm:irreducible-components}
The space $Y_{n,\la,s,T}$ is equidimensional of complex dimension $n(\la) + (n-k)(s-1)$. In particular, the closed subvarieties $\overline{Y_{n,\la,s,T}^{S}}$ for which $Y_{n,\la,s,T}^{S}$ is nonempty (as described in~Lemma \ref{lem:IrreducibleNonempty}) form a complete set of irreducible components. In the case $s>\ell(\la)$, there are $\binom{n}{k}\cdot \#\SYT(\la)$ many irreducible components.
\end{theorem}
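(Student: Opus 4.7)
The proof plan is to assemble the preceding lemmas. By Lemma~\ref{lem:IrreducibleDimension}, for each $S \in \mathcal{P}(n,\lambda)$ with $Y_T^S$ nonempty, $Y_T^S$ is smooth and connected, hence irreducible, of complex dimension $d := n(\lambda) + (n-k)(s-1)$. Consequently, the closure $\overline{Y_T^S}$ is an irreducible closed subvariety of $Y_T$ of the same dimension $d$.

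Next, by Lemma~\ref{lem:IrredSubspaceUnion}, every cell $C_w \cap Y_T$ is contained in a unique $Y_T^S$, so
\begin{equation}
Y_T = \bigsqcup_{S} Y_T^S = \bigcup_{S} \overline{Y_T^S},
\end{equation}
where the union ranges over the $S \in \mathcal{P}(n,\lambda)$ for which $Y_T^S$ is nonempty. By Corollary~\ref{cor:YDimension}, $\dim Y_T = d$, so $Y_T$ is a finite union of irreducible closed subvarieties each of the maximal possible dimension $d$. This immediately shows that $Y_T$ is equidimensional of dimension $d$, proving the first claim.

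To conclude that the $\overline{Y_T^S}$ are exactly the irreducible components, I would argue that none can be contained in another: if $\overline{Y_T^S} \subseteq \overline{Y_T^{S'}}$, then by irreducibility and equality of dimensions we would have $\overline{Y_T^S} = \overline{Y_T^{S'}}$, and then $Y_T^S$ and $Y_T^{S'}$ would both be dense open subsets of this closure, hence intersect; but they are disjoint by Lemma~\ref{lem:IrredSubspaceUnion}, forcing $S = S'$. Therefore no containment relations hold, and $\{\overline{Y_T^S}\}$ is precisely the set of irreducible components.

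Finally, for the count when $s > \ell(\lambda)$, Lemma~\ref{lem:IrreducibleNonempty} says that \emph{every} $S \in \mathcal{P}(n,\lambda)$ yields a nonempty $Y_T^S$, so the number of irreducible components equals $|\mathcal{P}(n,\lambda)|$. The bijection $S \mapsto$ (support of $S$ in $[n]$, labels reversed via $i \mapsto n-i+1$) identifies $\mathcal{P}(n,\lambda)$ with pairs (a $k$-subset of $[n]$, a standard Young tableau of shape $\lambda$), giving $\binom{n}{k}\cdot \#\SYT(\lambda)$. There is no serious obstacle in this argument; all the substantive work has been done in the irreducibility/dimension analysis of Lemma~\ref{lem:IrreducibleDimension} and in the cell enumeration of Lemma~\ref{lem:IrredSubspaceUnion}.
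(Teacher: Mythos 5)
Your proof is correct and follows essentially the same approach as the paper: apply Lemma~\ref{lem:IrreducibleDimension} for irreducibility and the common dimension, Lemma~\ref{lem:IrredSubspaceUnion} for the partition into the $Y_T^S$, and Lemma~\ref{lem:IrreducibleNonempty} plus the $\mathcal{P}(n,\lambda)$ count for the number of components. You spell out slightly more carefully than the paper does why no $\overline{Y_T^S}$ can be contained in another (equal dimensions plus disjointness of the dense opens), but this is just filling in a detail that the paper's terser proof leaves implicit.
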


\begin{proof}
By Lemma~\ref{lem:IrreducibleDimension}, the nonempty subspaces $Y_{n,\la,s,T}^S$ are smooth and connected of dimension $n(\la)+(n-k)(s-1)$. Therefore, $Y_{n,\la,s,T}^S$ is irreducible. Since the subspaces $Y_{n,\la,s,T}^S$ partition the space $Y_{n,\la,s,T}$, their closures form a complete set of irreducible components. When $s>\ell(\la)$, the irreducible components are thus in bijection with $\mathcal{P}(n,\la)$, which has cardinality $\binom{n}{k}\cdot \#\SYT(\la)$.
\end{proof}

\subsection{Generalization of the Springer correspondence}
In the case $s>\ell(\la)$, the irreducible components are naturally indexed by Standard Young Tableaux on $\la \cup (n-k)$.  This indexing of irreducible components extends to a realization of the top cohomology group of $Y_{n,\lambda,s}$ as a $S_n$-module, generalizing Springer's theorem that the top cohomology group of a Springer fiber is a Specht module.

\begin{theorem}[Theorem~\ref{thm:GenSpringerCorrespondence}]\label{thm:GenSpringerCorrespondence2}
Let $d = \dim(Y_{n,\la,s}) = n(\la) + (n-k)(s-1)$, and consider $S_k$ as the subgroup of $S_n$ permuting the elements of $[k]$. For $s>\ell(\la)$, we have an isomorphism of $S_n$-modules
\begin{align}
H^{2d}(Y_{n,\la,s};\bQ) \cong \mathrm{Ind}\!\uparrow_{S_k\times S_{n-k}}^{S_n} (S^\la),
\end{align}
where $S_k\times S_{n-k}$ acts on $S^\la$ by its usual action of $S_k$
(and $S_{n-k}$ acts trivially).
For $s=\ell(\la)$, we have
\begin{align}\label{eq:SLengthLambdaIso}
    H^{2d}(Y_{n,\la,s};\bQ) \cong S^{\Lambda/(n-k)^{s-1}},
\end{align}
the Specht module of skew shape $\Lambda/(n-k)^{s-1}$.
\end{theorem}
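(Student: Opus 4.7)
My approach is to leverage Theorem~\ref{thm:MainTheorem} to reduce the cohomological statement to a purely algebraic one about the top-degree component of $R_{n,\lambda,s}^\bQ$, then invoke the graded Frobenius characteristic formula from~\cite{GriffinOSP}. Since the isomorphism $H^*(Y_{n,\la,s};\bQ) \cong R_{n,\lambda,s}^\bQ$ is one of graded $S_n$-modules (with each $x_i$ in degree $2$), and $\dim_{\bC} Y_{n,\la,s} = d$ by Corollary~\ref{cor:YDimension}, we obtain an $S_n$-equivariant isomorphism $H^{2d}(Y_{n,\la,s};\bQ) \cong (R_{n,\lambda,s}^\bQ)_d$. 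The problem is thus to compute the Frobenius characteristic of the top-degree piece of $R_{n,\lambda,s}^\bQ$.

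With this reduction in hand, I would apply the graded Frobenius formula for $R_{n,\lambda,s}^\bQ$ from~\cite{GriffinOSP} (which the paper plans to state at the start of this section) and isolate the coefficient of $q^d$. For the case $s > \ell(\lambda)$, the target is $s_\lambda \cdot h_1^{n-k}$: factoring the induction through the intermediate subgroup $S_k \times S_{n-k}$ and using $\Frob(\bQ[S_{n-k}]) = h_1^{n-k}$ gives $\Frob(\mathrm{Ind}\!\uparrow_{S_k}^{S_n} S^\lambda) = s_\lambda h_1^{n-k}$. For the case $s = \ell(\lambda)$, the target is simply $s_{\Lambda/(n-k)^{s-1}}$. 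In both cases, extracting the top coefficient should be more tractable than the full graded formula since only a single extremal term of the underlying combinatorial statistic contributes.

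The main obstacle is the final identification at the top degree. For $s > \ell(\lambda)$, I expect the combinatorial objects indexing the $q^d$ coefficient in~\cite{GriffinOSP} to decompose as pairs consisting of a $k$-element subset of $[n]$ together with an SYT of shape $\lambda$ (totaling the $\binom{n}{k} \cdot \#\SYT(\lambda)$ irreducible components found in Theorem~\ref{thm:irreducible-components}); a Pieri-style identity should then factor the Schur expansion into $s_\lambda h_1^{n-k}$. For $s=\ell(\lambda)$, I would invoke the Jacobi--Trudi/Aitken determinant $s_{\Lambda/(n-k)^{s-1}} = \det(h_{\Lambda_i - (n-k) - i + j})$ to match coefficients, using the hook-length formula for skew shapes as a dimension sanity check. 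As a backup strategy if the direct combinatorial matching is unwieldy, one can argue by induction using the recursion in Lemma~\ref{lem:RHilbRecursion}, translating it into a corresponding branching identity for $s_\lambda h_1^{n-k}$ (respectively $s_{\Lambda/(n-k)^{s-1}}$) upon restriction to $S_{n-1}$.
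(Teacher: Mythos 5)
Your approach is essentially the same as the paper's: reduce via Theorem~\ref{thm:MainTheorem} and Corollary~\ref{cor:YDimension} to computing the $S_n$-module structure of the top-degree piece of $R_{n,\lambda,s}^\bQ$, then use the graded Frobenius formula (Theorem~\ref{thm:grFrobTheorem}) to extract the degree-$d$ coefficient. The only difference of substance is that for $s>\ell(\lambda)$ the paper simply cites \cite[Corollary 3.3.15]{GriffinThesis} for the identification of the top piece with $\mathrm{Ind}\!\uparrow_{S_k}^{S_n}(S^\lambda)$, rather than carrying out the combinatorial matching you outline; for $s=\ell(\lambda)$ the paper does exactly the direct monomial/SSYT matching you propose (characterizing the extremal $\varphi$'s and identifying the sum with $s_{\Lambda/(n-k)^{\ell(\lambda)-1}}$), without needing Jacobi--Trudi.
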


Before we give the proof of Theorem~\ref{thm:GenSpringerCorrespondence2}, we recall a formula for the graded Frobenius characteristic of $R_{n,\la,s}$ proved in~\cite{GriffinOSP}. A \textbf{partial row-decreasing filling} $\varphi$ of $[\Lambda]$ is a filling of a subset of the cells of $[\Lambda]$ with positive integers (allowing repeated labels) such that the labels in each row are right justified and weakly decrease from left to right and such that all cells of $[\la]$ are filled. Let $\mathrm{PRD}_{n,\la,s}$ be the set of partial row-decreasing fillings of $[\Lambda]$ with a total of $n$ filled cells. 

Recall our convention that $(i,j)\in [\Lambda]$ is the cell in the $i$-th row from the top and the $j$-th column from the left. For $\varphi\in \mathrm{PRD}_{n,\la,s}$ and $(i,j)\in [\Lambda]$ a filled cell, let $\varphi(i,j)$ be the label of $\varphi$ in that cell. 
Given a weak composition $\alpha$, we say that $\varphi$ has \textbf{content $\alpha$} if for each $i$, the letter $i$ appears $\alpha_i$ many times as a label in $\varphi$.

\begin{figure}
    \centering
    \includegraphics[scale=0.45]{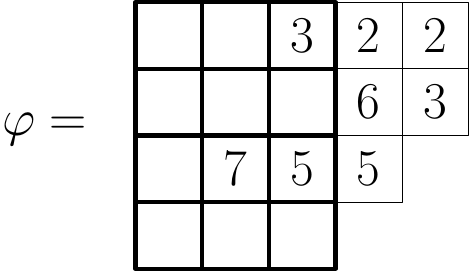}
    \caption{A partial row-decreasing filling $\varphi\in \mathrm{PRD}_{8,(2,2,1),4}$ of type $\alpha=(0,2,2,0,2,1,1)$ with $\inv(\varphi)=8$.}
    \label{fig:ExInversions}
\end{figure}

\begin{definition}
Given $\varphi\in \mathrm{PRD}_{n,\la,s}$, an \textbf{inversion} of $\varphi$ is one of the following,
\begin{enumerate}
\item  [(I1)] A pair of cells $(i,j),(i',j)\in [\la]$ such that $i<i'$ and $\varphi(i,j) > \varphi(i',j)$,
\item  [(I2)] A pair of cells $(i,j),(i',j-1)\in [\la]$ such that $i'<i$ and $\varphi(i,j) > \varphi(i',j-1)$,
\item  [(I3)] A cell $(i,n-k+1)\in [\la]$ in the first column of $[\la]$ together with a filled cell $(i',j')\in [\Lambda]\setminus [\la]$ such that $i' < i$ and $\varphi(i,n-k+1) > \varphi(i',j')$,
\item  [(I4)] A pair $(i,c)$, where $i$ is an integer and $c = (i',j')$ is a filled cell of $[\Lambda]\setminus [\la]$ such that $1\leq i<i'$.
\end{enumerate}
Let $\inv(\varphi)$ be the number of inversions of $\varphi$. 
\end{definition}

See Figure~\ref{fig:ExInversions} for an example of $\varphi\in \mathrm{PRD}_{8,(2,2,1),4}$. In this case, $\varphi$ has the following $8$ inversions: The pair $(2,4),(3,4)$ of type (I1), the pair $(2,5),(1,4)$ of type (I2), the cell $(2,4)$  in the first column of $[\la]$ with the filled cell $(1,3)$ of type (I3), the cell $(3,4)$ in the first column of $[\la]$ with the filled cell $(1,3)$ of type (I3), and the pairs $(1,(3,2))$, $(2,(3,2))$, $(1,(3,3))$, and $(2,(3,3))$ of type (I4).

\begin{remark}
The partial row-decreasing fillings defined here are a simple variation of the extended column-increasing fillings defined in~\cite{GriffinOSP}. There is an obvious bijection between $\mathrm{PRD}_{n,\la,s}$ defined here and $\mathrm{ECI}_{n,\la,s}$ defined in~\cite{GriffinOSP}: Given $\varphi\in \mathrm{PRD}_{n,\la,s}$, rotate the filling by 90 degrees counterclockwise, and delete all unfilled cells. The inversion statistic $\mathrm{inv}$ defined here is also a simple variation of the $\mathrm{inv}$ statistic in \cite{GriffinOSP}, which in turn is a variation on the \textit{coinversion} statistic on ordered set partitions originally defined by Rhoades, Yu, and Zhao in~\cite{Rhoades-Yu-Zhao}.
\end{remark}

We have the following formula for the graded Frobenius characteristic of $R_{n,\la,s}$. We state it in terms of partial row-decreasing fillings and powers of $q^2$ (due to our convention that $x_i$ has degree $2$).
\begin{theorem}[{\cite[Theorem 5.13]{GriffinOSP}}]\label{thm:grFrobTheorem}
  We have
  \begin{equation}
    \Frob(R_{n,\la,s}^\bQ;q) = \sum_{\varphi \in \mathrm{PRD}_{n,\la,s}} q^{2\,\inv(\varphi)}\bx^\varphi,
  \end{equation}
  where $\bx^\varphi$ is the monomial such that the power of $x_i$ is the number of times $i$ appears as a label in $\varphi$.
\end{theorem}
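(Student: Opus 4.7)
Plan:

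I would prove this by induction on $n$. The base case $n=0$ holds immediately: $R_{0,\emptyset,s}^\bQ = \bQ$ has $\Frob = 1$, and the only element of $\mathrm{PRD}_{0,\emptyset,s}$ is the empty filling with $\inv = 0$.

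For the inductive step, the Artin basis decomposition
$$\cA_{n,\la,s} \;=\; \bigsqcup_{i=1}^{\ell(\la)} x_n^{i-1}\,\cA_{n-1,\la^{(i)},s} \;\sqcup\; \bigsqcup_{i=\ell(\la)+1}^{s} x_n^{i-1}\,\cA_{n-1,\la,s}$$
realizes $R_{n,\la,s}^\bQ$ as a graded $S_{n-1}$-module direct sum, with $S_{n-1}$ fixing $x_n$.  This algebraic structure suggests a parallel combinatorial recursion on $\mathrm{PRD}_{n,\la,s}$: I would classify PRD fillings by the row $i$ containing a canonical distinguished cell (for instance, the leftmost cell in its row containing the largest label present in that row, or the rightmost filled cell in a distinguished row).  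Removing this cell should produce a filling in $\mathrm{PRD}_{n-1,\la^{(i)},s}$ if $i\leq \ell(\la)$ and in $\mathrm{PRD}_{n-1,\la,s}$ otherwise, and a careful accounting of the four inversion types (I1)--(I4) should show that the removed cell contributes exactly $i-1$ inversions, matching the grading $q^{2(i-1)}$ carried by the Artin summand $x_n^{i-1}$.

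To upgrade this Hilbert-series-level matching to a full identity of graded symmetric functions (hence of graded $S_n$-characters), I would pair both sides against each power-sum $p_\mu(\bx)$ under the Hall inner product, reducing the problem to a family of polynomial identities in $q$ computing graded character values on each conjugacy class.  On the algebraic side, each such pairing computes the graded trace of a permutation $w$ of cycle type $\mu$ acting on $R_{n,\la,s}^\bQ$; the Artin basis gives enough control to evaluate this inductively, provided one tracks how the $S_n$-action interacts with the distinguished variable $x_n$ under the restriction to $S_{n-1}$.  On the combinatorial side, the same pairing extracts a refined generating function over PRD fillings whose content decomposes according to $\mu$.

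The main obstacle will be reconciling the intrinsic $S_n$-symmetry of the symmetric-function identity with the algebraic recursion, which is only manifestly $S_{n-1}$-equivariant.  The combinatorial bookkeeping for inversion types (I3) and especially (I4) is delicate: (I4) pairs integers with cells in $[\Lambda]\setminus[\la]$, so removing a cell from the rectangular region nontrivially shifts this count, and these shifts must be balanced against the reindexing of the remaining labels in a way that lines up precisely with the grading shift on the algebraic side.  Getting this to match in every conjugacy class of $S_n$, uniformly across the two cases $i\leq \ell(\la)$ and $i>\ell(\la)$ of the Artin recursion, is the heart of the argument.
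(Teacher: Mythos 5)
The paper itself does not prove this statement---it is imported wholesale from~\cite[Theorem 5.13]{GriffinOSP}---so the relevant comparison is with Griffin's proof, which (like the Garsia--Procesi argument it generalizes) works directly with the $S_n$-module structure of $R_{n,\la,s}$. The gap in your plan sits exactly at the point you defer to ``the heart of the argument.'' The Artin decomposition $\cA_{n,\la,s}=\bigsqcup_i x_n^{i-1}\cA_{n-1,\la^{(i)},s}\sqcup\cdots$ is only a statement about graded vector spaces: the span of $x_n^{i-1}\cA_{n-1,\la^{(i)},s}$ inside the quotient ring is \emph{not} $S_{n-1}$-stable, since reducing $\sigma(x_n^{i-1}m)$ modulo $I_{n,\la,s}$ reintroduces $x_n$ and scatters the result across the summands (already in the coinvariant ring with $n=3$, the transposition $(1\,2)$ sends $x_2\mapsto x_1\equiv -x_2-x_3$). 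So the recursion you start from controls the Hilbert series---which is precisely how the present paper uses it, in Lemma~\ref{lem:RHilbRecursion} and Theorem~\ref{thm:RankGenNLaS}---but sees none of the module structure. Promoting it to an $S_{n-1}$-equivariant filtration with graded quotients $R_{n-1,\la^{(i)},s}$ is itself a theorem (a Garsia--Procesi-type branching rule) that your plan assumes rather than proves.

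Moreover, even granting such a branching rule, restriction to $S_{n-1}$ does not determine the $S_n$-character: the operator $p_1^\perp$ has a large kernel (for instance $S^{(4)}\oplus S^{(2,2)}$ and $S^{(3,1)}$ have identical restrictions), so the induction cannot close without computing graded traces of permutations that move $x_n$---and the Artin basis gives no handle on those, since such permutations do not act on it even triangularly. Your proposed remedy, pairing with every $p_\mu$ and evaluating the traces ``inductively using the Artin basis,'' is the difficulty restated, not a method. There is also a conflation of the polynomial variables $x_1,\dots,x_n$ with the symmetric-function variables appearing in $\bx^\varphi$: elements of $\mathrm{PRD}_{n,\la,s}$ carry arbitrary repeated labels, so the right-hand side is a monomial-by-monomial refinement of the character, whereas the single-cell-removal recursion you sketch (matched against the summand $x_n^{i-1}$) is modelled on the \emph{injective} fillings that index cells and the Hilbert series. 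A correct combinatorial counterpart would have to be an $e_1^\perp$- or $p_1^\perp$-type recursion on $\mathrm{PRD}_{n,\la,s}$, proved to match an equivariant filtration on the algebraic side; supplying both halves is, in substance, the content of the cited proof, and neither is supplied here.
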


\begin{proof}[Proof of Theorem~\ref{thm:GenSpringerCorrespondence2}]
  By Theorem~\ref{thm:MainTheorem}, we have $H^*(Y_{n,\la,s};\bQ) \cong R_{n,\la,s}^\bQ$ as graded $S_n$-modules.
The case when $s>\ell(\lambda)$ follows immediately by combining this
isomorphism with \cite[Equation 3.3.27]{GriffinThesis}, which says
that the top degree component of $R_{n,\lambda,s}^{\bQ}$ is isomorphic
as an $S_n$-module to $\mathrm{Ind}\!\uparrow_{S_k\times
  S_{n-k}}^{S_n}\!(S^\lambda)$, where $S_k$ acts in the usual way on
$S^\lambda$ and $S_{n-k}$ acts trivially.

Let us now assume $s=\ell(\lambda)$. Recall $d = \dim_\bC Y_{n, \lambda, s} = n(\lambda) + (n-k)(s-1).$ Combining Theorem~\ref{thm:MainTheorem} and Theorem~\ref{thm:grFrobTheorem}, we have that
\begin{align}\label{eq:FrobCohMonomial}
    \Frob(H^{2d}(Y_{n,\lambda,s};\bQ)) = \sum_{\substack{\varphi \in \mathrm{PRD}_{n,\lambda,s},\\ \mathrm{inv}(\varphi)=d}} \mathbf{x}^\varphi.
\end{align}
We show that the right-hand side of \eqref{eq:FrobCohMonomial} is
equal to the skew Schur function $s_{\Lambda/(n-k)^{\ell(\la)-1}}(\bx)$.

Let $\alpha = (\alpha_1,\dots, \alpha_n)$ be a weak composition of $n$ into $n$ parts.
The coefficient of $\bx^\alpha = \prod_i x_i^{\alpha_i}$ in the right-hand side of \eqref{eq:FrobCohMonomial} is the number of $\varphi\in \mathrm{PRD}_{n,\la,s}$ with content $\alpha$ such that $\mathrm{inv}(\varphi) = d$. It follows from \cite{HHL} that the total number of (I1) and (I2) inversions of $\varphi$ is $n(\la)$ if and only if the entries of $\varphi$ in $[\lambda]$ decrease down each column (and $n(\la)$ is the maximum possible total). Furthermore, each of the $n-k$ cells of $[\Lambda]\setminus[\la]$ can be part of a maximum of $s-1$ inversions of type (I3) and (I4), and if any (I4) inversion occurs, then the entries of $\varphi$ in $[\lambda]$ do not decrease down each column. Therefore, $\inv(\varphi)=d$ if and only if the entries of $\varphi$ in $[\lambda]$ decrease down each column and each filled cell of $[\Lambda]\setminus[\la]$ is in row $s=\ell(\la)$.

Given such a $\varphi\in \mathrm{PRD}_{n,\la,s}$ with $\inv(\varphi)=d$, 
we define a labeling $T$ of the Young diagram of skew shape $\Lambda/(n-k)^{\ell(\lambda)-1}$ by deleting all empty cells of $[\Lambda]$ and replacing each label $j$ with $n+1-j$. Since the labels of $\varphi$ weakly decrease from left to right along each row and strictly decrease down each column, $T$ is semi-standard, and its content is $(\alpha_n,\alpha_{n-1},\dots, \alpha_1)$. Moreover, $\varphi$ can easily be reconstructed from $T$.

This shows that the coefficient of $\bx^\alpha$ in the right-hand side
of \eqref{eq:FrobCohMonomial} is equal to the coefficient of
$\bx^{(\alpha_n,\alpha_{n-1},\dots, \alpha_1)}$ in
$s_{\Lambda/(n-k)^{\ell(\la)-1}}(\bx)$. Since both the skew Schur
function $s_{\Lambda/(n-k)^{\ell(\la)-1}}(\bx)$ and the right-hand side of \eqref{eq:FrobCohMonomial} are symmetric, it follows that they are equal.
\end{proof}

\section{The \texorpdfstring{$\Delta$}{Delta}-Springer ind-variety and the scheme of diagonal \\rank-deficient matrices}\label{sec:IndVariety}

In this section, we construct an ind-variety $Y_{n,\lambda}$ as the direct limit of the spaces $Y_{n,\la,s}$ as $s\to \infty$. We then prove the cohomology ring of $Y_{n,\lambda}$ is isomorphic to the coordinate ring of the scheme-theoretic intersection of an Eisenbud--Saltman rank variety~\cite{Eisenbud-Saltman} with diagonal matrices, generalizing a similar characterization of the cohomology ring of the Springer fiber due to De Concini and Procesi~\cite{dCP}.

\subsection{The ind-variety \texorpdfstring{$Y_{n,\lambda}$}{Yn,lambda}}
For any integer $0\leq k \leq n$ and any partition $\lambda\vdash k$, define $Y_{n,\la}$ as follows.  Let $\bC^\infty$ be the countably-infinite dimensional $\bC$-vector space with basis $\{f_1, f_2, f_3,\ldots\}$, and let $N$ be a nilpotent operator on $\bC^\infty$ with Jordan type 
\begin{equation}
(n-k+\la_1, \dots, n-k+\la_{\ell(\la)}, n-k,n-k,\dots).
\end{equation}
Without loss of generality, we assume that the $f_i$'s are the generalized eigenvectors for $N$.  Furthermore, we assume that, for all $s \geq \ell(\la)$, $N$ fixes the subspace $\vspan{\{f_1,\ldots,f_{k+(n-k)s}\}}$ and the restricted endomorphism $N|_{\vspan{\{f_1,\ldots,f_{k+(n-k)s}\}}}$
has Jordan type
\begin{equation}
(n-k+\la_1, \dots, n-k+\la_{\ell(\la)}, (n-k)^{s-\ell(\la)}).
\end{equation}
Note that $\im(N^{n-k})$ has dimension $k$.

We define the \textbf{$\Delta$-Springer ind-variety}
\begin{align}
    Y_{n,\la}\coloneqq \{V_\bullet\in \Fl_{(1^n)}(\bC^\infty)\st NV_i\subseteq V_{i-1} \text{ for }i\leq n\text{ and }\im(N^{n-k})\subseteq V_n\}.
\end{align}
By our assumptions on $N$, we have closed embeddings
\begin{equation}
Y_{n,\la,\ell(\la)}\subseteq Y_{n,\la,\ell(\la)+1}\subseteq\cdots\subseteq Y_{n,\la,s}\subseteq\cdots.
\end{equation}
We consider the direct limit
\begin{equation}
Y_{n,\la} = \bigcup_{s \geq \ell(\la)} Y_{n, \lambda, s} \cong \varinjlim_s Y_{n,\la,s}.
\end{equation}
(Note that the topology of $Y_{n,\la}$ as an ind-variety is the same as the subspace topology as a subset of $\Fl_{(1^n)}(\bC^\infty)$ since it is a closed ind-subvariety of $\Fl_{(1^n)}(\bC^\infty)$; see \cite[Ch. 4]{Kumar}.)

We continue to abuse notation and use $\widetilde V_i$ to denote the tautological rank $i$ vector bundle on $Y_{n,\la}$, which is the subspace of $\Fl_{(1^n)}(\bC^{\infty})\times \bC^\infty$ whose fiber over $V_\bullet$ is $V_i$.

Recall the graded rings $R_{n,\la}$ defined in Section~\ref{sec:Background}, which in general have infinitely many nonzero graded components.  They are defined by
\begin{align}
I_{n,\la} &\coloneqq \langle e_d(S) \st S\subseteq \{x_1,\dots, x_n\},\, d>|S| - \la_n' - \cdots - \la_{n-|S|+1}'\rangle,\\
R_{n,\la} &\coloneqq \bZ[x_1,\dots,x_n]/I_{n,\la}.
\end{align}
Furthermore, let $R_{n,\la}^\bQ$ be the quotient of $\bQ[x_1,\dots, x_n]$ by the $\bQ$-span of $I_{n,\la}$.

\begin{theorem}\label{thm:InfiniteIso}
We have $H^*(Y_{n,\la})\cong R_{n,\la}$ as graded rings, such that the
cohomology class $-c_1(\widetilde V_i/\widetilde V_{i-1})$ is identified with $x_i$.
\end{theorem}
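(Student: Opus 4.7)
\medskip
\noindent\textbf{Proof proposal.} The plan is to realize $H^*(Y_{n,\la})$ as an inverse limit of the rings $H^*(Y_{n,\la,s})$, apply Theorem~\ref{thm:MainTheorem} degree-by-degree, and then identify the algebraic inverse limit with $R_{n,\la}$.

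First I would analyze the transition maps. The closed embeddings $\iota_s\colon Y_{n,\la,s}\hookrightarrow Y_{n,\la,s+1}$ pull back the tautological subbundle $\widetilde V_i$ on the larger space to the tautological subbundle on the smaller space. By naturality of Chern classes, the induced map $\iota_s^*\colon H^*(Y_{n,\la,s+1})\to H^*(Y_{n,\la,s})$ carries $-c_1(\widetilde V_i/\widetilde V_{i-1})$ to $-c_1(\widetilde V_i/\widetilde V_{i-1})$. Combining with Theorem~\ref{thm:MainTheorem}, which shows both rings are generated by these Chern classes modulo the relations $I_{n,\la,s}$ and $I_{n,\la,s+1}$ respectively, $\iota_s^*$ is identified with the canonical surjection $R_{n,\la,s+1}\twoheadrightarrow R_{n,\la,s}$ sending $x_i\mapsto x_i$. (This map is well defined because $I_{n,\la,s+1}=I_{n,\la}+\langle x_1^{s+1},\dots,x_n^{s+1}\rangle\subseteq I_{n,\la,s}$.) In particular, every transition map is surjective.

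Next I would compute the algebraic inverse limit. Since each variable $x_i$ has degree $2$, the generators $x_i^s\in I_{n,\la,s}$ contribute nothing to graded pieces of degree strictly less than $2s$. Hence in each fixed degree $d$ the natural map $R_{n,\la}\twoheadrightarrow R_{n,\la,s}$ is an isomorphism once $s>d/2$, so the inverse system $\{(R_{n,\la,s})^d\}_s$ stabilizes at $(R_{n,\la})^d$. Thus $\varprojlim_s R_{n,\la,s}\cong R_{n,\la}$ as graded rings, with the isomorphism identifying $x_i$ on the right with the compatible sequence $(x_i)_s$ on the left.

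Finally I would compare with the topology of $Y_{n,\la}$. Since $Y_{n,\la}$ is the countable ascending union of the compact closed subvarieties $Y_{n,\la,s}$, the Milnor short exact sequence
\begin{equation}
0 \to {\lim_s}^1 H^{i-1}(Y_{n,\la,s}) \to H^i(Y_{n,\la}) \to \varprojlim_s H^i(Y_{n,\la,s}) \to 0
\end{equation}
applies. By the previous paragraph the transition maps are surjective, so the Mittag-Leffler condition holds and the $\lim^1$ term vanishes, yielding $H^*(Y_{n,\la})\cong \varprojlim_s H^*(Y_{n,\la,s})\cong R_{n,\la}$. Naturality of Chern classes ensures that $-c_1(\widetilde V_i/\widetilde V_{i-1})\in H^*(Y_{n,\la})$ corresponds under this isomorphism to $x_i\in R_{n,\la}$, giving the statement.

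The main obstacle will be the third step: carefully justifying the Milnor/inverse-limit identification $H^*(Y_{n,\la})\cong\varprojlim_s H^*(Y_{n,\la,s})$ in the ind-variety setting. Everything else is essentially formal once Theorem~\ref{thm:MainTheorem} is in hand: degreewise stabilization of $R_{n,\la,s}$ and naturality of Chern classes do all the algebraic work.
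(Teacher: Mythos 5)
Your proposal is correct and follows essentially the same route as the paper: identify $H^*(Y_{n,\la,s})\cong R_{n,\la,s}$ via Theorem~\ref{thm:MainTheorem}, compute $\varprojlim_s R_{n,\la,s}\cong R_{n,\la}$ by degreewise stabilization, and identify $H^*(Y_{n,\la})\cong\varprojlim_s H^*(Y_{n,\la,s})$ for the ascending union, with naturality of Chern classes matching the generators. The only cosmetic difference is that the paper cites Pawlowski--Rhoades for the topological identification $H^*(\varinjlim Y_{n,\la,s})\cong\varprojlim H^*(Y_{n,\la,s})$, while you spell out the underlying Milnor exact sequence and verify Mittag-Leffler via the surjectivity of the transition maps — a level of detail the paper omits but which is the same argument.
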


\begin{proof}
By Theorem~\ref{thm:MainTheorem}, we have $H^*(Y_{n,\lambda,s})\cong R_{n,\la,s}$.  From the definitions of $R_{n,\la}$ and $R_{n,\la,s}$, it can be checked that
\begin{align}
R_{n,\la} \cong \varprojlim_s R_{n,\la,s} \cong \varprojlim_s H^*(Y_{n,\la,s}),
\end{align}
where the inverse limit is taken in the category of graded rings. Since each $Y_{n,\la,s}$ is a complex variety with an affine paving, we have
\begin{align}
  \varprojlim_s H^*(Y_{n,\la,s}) \cong H^*(\varinjlim_s Y_{n,\la,s}) = H^*(Y_{n,\la});
\end{align}
see for example~\cite[Lemma 7.2]{Pawlowski-Rhoades}. By naturality of Chern classes, the first Chern class $-c_1(\widetilde V_i/\widetilde V_{i-1})$ in $H^*(Y_{n,\la})$ corresponds to its associated first Chern class in $H^*(Y_{n,\la,s})$, which is in turn identified with the variable $x_i$. This completes the proof.
\end{proof}

\subsection{The scheme of diagonal rank-deficient matrices}

Let $\fgl_n$ be the space of $n\times n$ matrices over $\bQ$. Let $x_{i,j}$  for $1\leq i,j \leq n$ be the coordinate functions corresponding to the entries of an $n\times n$ matrix. Then the coordinate ring of $\fgl_n$ is $\bQ[\fgl_n] = \bQ[x_{i,j}]$.  

For $\la\vdash n$, let $O_\la\subseteq \fgl_n$ be the conjugacy class of nilpotent $n\times n$ matrices over $\bQ$ whose Jordan canonical form has block sizes recorded by $\la$. Let $\overline{O}_\la$ be the closure of $O_\la$ in $\fgl_n$. The set of diagonal matrices $\ft$ is the variety defined by the ideal
\begin{align}
I(\ft) = \langle x_{i,j} \st i\neq j\rangle.
\end{align}
The \textbf{scheme-theoretic intersection} of the varieties $\overline{O}_\la$ and $\ft$ is the affine scheme whose coordinate ring is defined by the sum of the defining ideals of $\overline{O}_\la$ and $\ft$,
\begin{align}\label{eq:CoordinateRing}
\bQ[\overline{O}_\la \cap \ft] \coloneqq \frac{\bQ[x_{i,j}]}{I(\overline{O}_\la) + I(\ft)}.
\end{align}
The symmetric group $S_n$ of permutation matrices acts by conjugation on both $\overline{O}_\la$ and $\ft$, so we have an action of $S_n$ on $\bQ[\overline{O}_\la\cap \ft]$. Observe that the variables $x_{i,i}$ generate this coordinate ring. Re-indexing the generators $x_{i,i}$ of $\bQ[\overline{O}_\la \cap \ft]$ as $x_i$, $S_n$ acts by permuting the $x_i$ variables. We consider $\bQ[\overline{O}_\la\cap \ft]$ as a graded ring and graded $S_n$-module where $x_{i,i}$ is declared to be in degree $2$.

Motivated by work of Kostant~\cite{Kostant} on the coinvariant algebra, Kraft~\cite{Kraft} conjectured that the coordinate ring~\eqref{eq:CoordinateRing} is isomorphic to the cohomology ring of a Springer fiber. De Concini and Procesi~\cite{dCP} proved Kraft's conjecture. Tanisaki~\cite{Tanisaki} then simplified the arguments of De Concini and Procesi and further proved that these rings have the explicit presentation as the quotient ring $R_{\la} = R_{n, \la, \ell(\la)}$.

\begin{theorem}[\cite{dCP,Tanisaki}]
There are isomorphisms of graded rings and graded $S_n$-modules
\begin{align}
H^*(\mathcal{B}^\lambda;\bQ) \cong R_\la^\bQ \cong \bQ[\overline{O}_{\la'}\cap \ft],
\end{align}
where the $S_n$ action on $H^*(\mathcal{B}^\lambda;\bQ)$ permutes the first Chern classes $-c_1(\widetilde V_i/\widetilde V_{i-1})$ and corresponds to Springer's representation tensored with the sign representation.
\end{theorem}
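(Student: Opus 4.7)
The plan is to establish the two isomorphisms $H^*(\mathcal{B}^\lambda;\bQ) \cong R_\la^\bQ$ and $R_\la^\bQ \cong \bQ[\overline{O}_{\la'}\cap \ft]$ separately, using largely independent methods.  I would handle the right-hand ring-theoretic identification first.  By the Eisenbud--Saltman rank-variety description (or, classically, by De Concini--Procesi's original calculation), the defining ideal $I(\overline{O}_{\la'})$ of the nilpotent orbit closure inside $\bQ[\fgl_n]$ is generated by the traces $\mathrm{tr}(X^d)$ together with suitably sized minors of the matrix powers $X^d$ encoding the rank inequalities $\rk(X^d) \leq n - \la'_1 - \cdots - \la'_d$ that characterize matrices of Jordan type dominated by $\la'$.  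Restricting these generators to the diagonal subscheme $\ft$ (i.e., imposing $x_{i,j} = 0$ for $i \neq j$) converts $\mathrm{tr}(X^d)$ into the power sum $p_d(x_1,\dots,x_n)$ and converts each minor into a polynomial indexed by a subset $S \subseteq \{x_1,\dots,x_n\}$.  After translating power sums into elementary symmetric polynomials and rewriting the rank bounds in terms of the conjugate-partition conditions, this restricted ideal coincides with the Tanisaki ideal $I_\la^\bQ$ of Definition~\ref{def:RnLaDef}, with any apparent additional relations shown to be redundant by comparing graded dimensions (Garsia--Procesi).

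For the cohomological identification, I would start from a Spaltenstein-type affine paving of $\mathcal{B}^\lambda$ obtained by intersecting with the Schubert cells of $\Fl(n)$.  Then Lemma~\ref{lem:OddCohVanishes} gives vanishing of odd cohomology, and Lemma~\ref{lem:PavingSurj} applied to the inclusion $\mathcal{B}^\lambda \hookrightarrow \Fl(n)$ yields a surjection $H^*(\Fl(n);\bQ) \twoheadrightarrow H^*(\mathcal{B}^\lambda;\bQ)$.  Combined with Borel's presentation~\eqref{eq:BorelTheorem}, this exhibits $H^*(\mathcal{B}^\lambda;\bQ)$ as a quotient of $\bQ[x_1,\dots,x_n]/\langle e_1,\dots,e_n\rangle$ via $x_i \mapsto -c_1(\widetilde V_i/\widetilde V_{i-1})$.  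To show that each Tanisaki relation $e_d(S) = 0$ holds, I would exploit the fact that on $\mathcal{B}^\lambda$ the nilpotent $N_\lambda$ acts as a bundle endomorphism of the trivial bundle $\bC^n$ preserving the filtration $\widetilde V_\bullet$, so its powers have globally bounded rank in terms of $\la'$.  Expanding the Whitney sum identity $c(\widetilde V_i)\,c(\bC^n/\widetilde V_i) = c(\bC^n) = 1$ and invoking the vanishing of Chern classes above the rank of the relevant quotient bundle produces exactly the relations $e_d(S) = 0$ for $d > |S| - \la'_n - \cdots - \la'_{n-|S|+1}$.  The resulting surjection $R_\la^\bQ \twoheadrightarrow H^*(\mathcal{B}^\lambda;\bQ)$ is then forced to be an isomorphism by Hilbert series agreement, since $\Hilb(H^*(\mathcal{B}^\lambda);q) = \widetilde{H}_\lambda(1,\dots,1;q^2)$ by Hotta--Springer and $\Hilb(R_\la^\bQ;q)$ equals the same quantity by Garsia--Procesi.

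The main obstacle will be carrying out the Chern class vanishing cleanly for every subset $S \subseteq \{x_1,\dots,x_n\}$.  The nilpotent $N_\lambda$ does not act diagonally on the tautological graded pieces $\widetilde V_i/\widetilde V_{i-1}$, so the rank bounds must be extracted globally: one considers the restriction of $N_\lambda$ to an appropriate $\widetilde V_j$ together with the induced maps on the trivial-bundle quotients $\bC^n/\widetilde V_j$, and tracks how the Jordan structure of $\la$ constrains the ranks of successive powers.  Tanisaki's contribution is to pin down the minimal set of Chern class vanishings that generates the full ideal $I_\la$ and match these with exactly the generators in Definition~\ref{def:RnLaDef}.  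The $S_n$-equivariance of the composite isomorphism then follows because each step is built equivariantly: Springer's action on $H^*(\mathcal{B}^\lambda;\bQ)$ corresponds under the Borel-type presentation to permutation of the variables $x_i$, which matches the conjugation action by permutation matrices on the diagonal entries of $\overline{O}_{\la'}\cap \ft$, up to the customary twist by the sign character noted in the statement.
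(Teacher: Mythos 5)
The paper does not itself prove this theorem; it is stated with attribution to De Concini--Procesi and Tanisaki and used as a black box, so there is no internal argument to compare against. Evaluating your sketch against the cited literature, the overall structure is right -- De Concini--Procesi establish $H^*(\cB^\la;\bQ)\cong\bQ[\overline O_{\la'}\cap\ft]$, Tanisaki supplies the explicit generators $e_d(S)$ and shows they cut out both rings, and Garsia--Procesi's Hilbert-series analysis is the standard way to close the gap once you have a surjection from $R_\la^\bQ$. Your two-pronged strategy (restrict the orbit-closure ideal to $\ft$; establish Chern-class vanishing and count) is the correct skeleton. However there are two substantive issues with the details.

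First, a conjugation slip: the rank conditions characterizing $\overline O_{\la'}$ are $\rk(X^d)\leq n-(\la')'_1-\cdots-(\la')'_d = n-\la_1-\cdots-\la_d$, not $\rk(X^d)\leq n-\la'_1-\cdots-\la'_d$ as you wrote; your displayed inequality cuts out $\overline O_\la$. Relatedly, one must be careful about which ideal is meant: as the paper observes for the more general $\overline O_{n,\la}$, the variety is cut out by the \emph{radical} of the ideal generated by the relevant $(r+1)$-minors, not by those minors themselves. The scheme-theoretic intersection in the theorem uses the honest reduced ideal $I(\overline O_{\la'})$, and identifying its restriction to $\ft$ with the Tanisaki ideal is exactly the content of De Concini--Procesi's and Tanisaki's work -- it cannot be assumed at the outset.

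Second, the Chern-class argument as sketched does not produce the Tanisaki relations. The Whitney-sum identity $c(\widetilde V_i)c(\bC^n/\widetilde V_i)=1$ only yields relations among $x_1,\dots,x_i$ for initial segments; it never sees an arbitrary subset $S$ of the variables. Tanisaki's actual argument builds $N_\lambda$-stable vector bundles such as $N_\lambda^a\widetilde V_i + \widetilde V_j$ on $\cB^\lambda$, uses the Jordan-type constraint to bound the ranks of these bundles and their quotients, and extracts the vanishing $e_d(S)=0$ from the Chern-class bounds on those quotients. You acknowledge this as "the main obstacle," but the sketch as written substitutes a much weaker statement in its place. (For comparison, the present paper sidesteps this entirely when proving Theorem~\ref{thm:MainTheorem}: it obtains the analogous relations not by a direct Chern-class computation but by injecting $H^*(Y_{n,\la,s})$ into $H^*(\cB_\mu^\Lambda)$ via $\pi^*$ and citing Brundan--Ostrik's presentation of the Spaltenstein cohomology.)
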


In~\cite{Eisenbud-Saltman}, Eisenbud and Saltman study varieties generalizing the varieties $\overline{O}_{\la}$. These varieties are the main focus of this section.

\begin{definition}
Let $k\leq n$, and let $\la\vdash k$. The \textbf{Eisenbud--Saltman rank variety} is the variety
\begin{align}
\overline{O}_{n,\la} &\coloneqq \{ X\in \fgl_n\st \dim\ker X^d \geq \la_1' + \cdots +\la_d', \, d=1,2,\dots, n\}\\
&= \{X\in \fgl_n \st \rk(X^d)\leq (n-k)+\la'_{d+1}+\cdots +\la'_n,\, d=1,2,\dots, n\}.
\end{align}
\end{definition}
The variety $\overline{O}_{n,\la}$ is the same as $X_r$ defined in \cite{Eisenbud-Saltman}, where $r$ is the rank function given by $r(d) = (n-k)+\la'_{d+1}+\cdots + \la'_n$. Note that the ideal cutting out $\overline{O}_{n,\la}$ is not in general generated by all $(r(d)+1)$-minors of $X^d$ for $d=1,\dots,n$ but is instead the radical of that ideal.  When $n=k$, we have $\overline{O}_{n,\la} = \overline{O}_\la$. When $n>k$, then $\overline{O}_{n,\la}$ contains matrices which are not nilpotent.  In particular, the variety $\overline{O}_{n,\la}$ contains all block diagonal matrices of the form
\begin{align}\label{eq:JordanForm}
X_\la \oplus A_{n-k} = \left[\begin{array}{c|c} X_{\la} & 0 \\ \hline 0 & A_{n-k}\end{array} \right],
\end{align}
where $X_\la\in \overline{O}_\la\subseteq \fgl_k$ and $A_{n-k}\in \fgl_{n-k}$, as well as any matrix that is conjugate to a matrix in this form.

 By combining~\cite[Corollary 6.4]{GriffinOSP} and Theorem~\ref{thm:InfiniteIso}, we have the following extension of the result of De Concini and Procesi.

\begin{corollary}
We have a string of isomorphisms of graded rings and graded $S_n$-modules,
\begin{align}
H^*(Y_{n,\la};\bQ) \cong R_{n,\la}^\bQ \cong \bQ[\overline{O}_{n,\la'}\cap \ft],
\end{align}
where the right-hand side is the coordinate ring of the scheme-theoretic intersection, and $S_n$ acts on the cohomology ring by permuting the first Chern classes $-c_1(\widetilde V_i/\widetilde V_{i-1})$.
\end{corollary}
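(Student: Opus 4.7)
The statement reduces to assembling two results that are already in place. The plan is to combine Theorem~\ref{thm:InfiniteIso} of the present paper with \cite[Corollary 6.4]{GriffinOSP}, which gives an algebraic identification of $R_{n,\la}^\bQ$ with the coordinate ring $\bQ[\overline{O}_{n,\la'}\cap \ft]$. The main step is to verify that the $S_n$-module structures match up under both isomorphisms, since the statement is about graded $S_n$-equivariant ring isomorphisms, not merely graded ring isomorphisms.

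First, I would invoke Theorem~\ref{thm:InfiniteIso} to obtain the graded ring isomorphism $H^*(Y_{n,\la};\bQ) \cong R_{n,\la}^\bQ$. By construction in Theorem~\ref{thm:InfiniteIso}, the generator $x_i \in R_{n,\la}^\bQ$ corresponds to the first Chern class $-c_1(\widetilde V_i/\widetilde V_{i-1})$. The $S_n$-action on $R_{n,\la}^\bQ$ permutes the $x_i$, which by the discussion after Theorem~\ref{thm:MainTheorem} and naturality of Chern classes corresponds to the action on $H^*(Y_{n,\la};\bQ)$ inherited from the compatible actions on the finite-stage approximations $H^*(Y_{n,\la,s};\bQ)$. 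Because the direct limit $Y_{n,\la} = \varinjlim_s Y_{n,\la,s}$ is $S_n$-equivariant in the appropriate sense (each closed embedding $Y_{n,\la,s} \hookrightarrow Y_{n,\la,s+1}$ respects the Chern classes), this makes the first isomorphism $S_n$-equivariant.

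Next, I would apply \cite[Corollary 6.4]{GriffinOSP} to obtain the graded $S_n$-equivariant ring isomorphism $R_{n,\la}^\bQ \cong \bQ[\overline{O}_{n,\la'}\cap \ft]$. On the coordinate ring side, $S_n$ acts by conjugation by permutation matrices, which restricts on the diagonal to the permutation action on the diagonal entries $x_{1,1},\dots,x_{n,n}$, so after the reindexing $x_{i,i} \mapsto x_i$ the $S_n$-actions agree by construction. Composing the two isomorphisms then yields the desired string of graded $S_n$-equivariant ring isomorphisms.

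The only genuine subtlety is the compatibility of gradings and $S_n$-actions along the inverse/direct limit in the cohomological identification; once that is verified, the rest is a direct citation. No new computation is required beyond what has already been established in Theorem~\ref{thm:InfiniteIso} and in the cited work \cite{GriffinOSP}.
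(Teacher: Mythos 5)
Your proposal matches the paper's approach exactly: the corollary is obtained by composing Theorem~\ref{thm:InfiniteIso} with \cite[Corollary 6.4]{GriffinOSP}, as stated in the sentence immediately preceding the corollary in the paper. Your additional care in checking the $S_n$-equivariance of both isomorphisms is correct and simply spells out details the paper leaves implicit.
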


Let 
\begin{align}
\mathrm{PRD}_{n,\la} &\coloneqq \bigcup_{s\geq \ell(\la)} \mathrm{PRD}_{n,\la, s},
\end{align}
where we identify $\varphi\in \mathrm{PRD}_{n,\la, s}$ with the partial row-decreasing filling in the set $\mathrm{PRD}_{n,\la, s+1}$ obtained by appending an empty $(s+1)$-th row to $\varphi$. Observe that for each $\varphi \in \mathrm{PRD}_{n,\la, s}$, the statistic $\inv(\varphi)$ does not depend on the parameter $s$. Hence, we may consider $\inv$ to be a statistic on elements of $\mathrm{PRD}_{n,\la}$.

We have the following corollary of \cite[Theorem 6.6]{GriffinOSP} and Theorem~\ref{thm:InfiniteIso}.

\begin{corollary}
For any $k\leq n$ and $\la\vdash k$,
\begin{equation}
\Frob(H^*(Y_{n,\la};\bQ);q) = \Frob(R_{n,\la}^\bQ;q) = \Frob(\bQ[\overline{O}_{n,\la'} \cap \ft];q)  = \sum_{\varphi \in \mathrm{PRD}_{n,\la}} q^{2\,\inv(\varphi)} \bx^\varphi.
\end{equation}
\end{corollary}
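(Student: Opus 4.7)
The plan is to derive the three equalities on the left as an immediate consequence of the previous corollary, and then to establish the combinatorial formula on the right by a stabilization argument from the finite-$s$ case. The first two equalities $\Frob(H^*(Y_{n,\la};\bQ);q) = \Frob(R_{n,\la}^\bQ;q) = \Frob(\bQ[\overline{O}_{n,\la'}\cap \ft];q)$ follow at once from the three-way graded $S_n$-module isomorphism stated in the previous corollary, so the substantive content is the final equality with the combinatorial sum.

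For that final equality, I would pass to the limit $s\to\infty$ in Theorem~\ref{thm:grFrobTheorem}. On the algebraic side, the surjection $R_{n,\la}^\bQ \twoheadrightarrow R_{n,\la,s}^\bQ$ has kernel equal to the ideal generated by $x_1^s,\dots,x_n^s$, which under our convention that each $x_i$ has degree $2$ is concentrated in degrees $\geq 2s$. The induced map on the degree-$2d$ pieces is therefore an $S_n$-equivariant isomorphism whenever $d < s$, so
\begin{equation}
[q^{2d}]\Frob(R_{n,\la}^\bQ;q) = [q^{2d}]\Frob(R_{n,\la,s}^\bQ;q) \qquad \text{for all } s > d.
\end{equation}

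On the combinatorial side, each filled cell lying in row $i'$ of $[\Lambda]\setminus[\la]$ contributes exactly $i'-1$ inversions of type (I4) to any $\varphi$. Hence any $\varphi \in \mathrm{PRD}_{n,\la}$ with $\inv(\varphi) = d$ has all of its filled cells in $[\Lambda]\setminus[\la]$ within the top $d+1$ rows of $[\Lambda]$, so the set $\{\varphi\in\mathrm{PRD}_{n,\la} : \inv(\varphi)=d\}$ stabilizes: for $s > d$ it coincides (via the identification used to define $\mathrm{PRD}_{n,\la}$) with $\{\varphi\in\mathrm{PRD}_{n,\la,s} : \inv(\varphi)=d\}$, and the monomial weight $\bx^\varphi$ and the value of $\inv(\varphi)$ are intrinsic to $\varphi$ and do not depend on the ambient $s$.

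Combining these two stabilization statements with Theorem~\ref{thm:grFrobTheorem} shows that the $q^{2d}$-coefficients on both sides of the claimed identity agree for every $d$, yielding the desired equality of formal power series in $q$. The main obstacle is the combinatorial stabilization; once the (I4) inversion-type analysis is in hand, the algebraic stabilization is a formal consequence of the definitions of the ideals $I_{n,\la}$ and $I_{n,\la,s}$.
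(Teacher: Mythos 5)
Your proof is correct, but it takes a genuinely different route from the paper's. The paper treats the entire statement as an immediate corollary of two citations: the previous corollary for the first two equalities, and \cite[Theorem 6.6]{GriffinOSP} for the closed-form expression $\Frob(R_{n,\la}^\bQ;q) = \sum_{\varphi\in\mathrm{PRD}_{n,\la}} q^{2\inv(\varphi)}\bx^\varphi$; that is, the $s$-unbounded formula is imported wholesale rather than derived. Your proof instead rederives the $s\to\infty$ formula from the finite-$s$ statement Theorem~\ref{thm:grFrobTheorem} (itself \cite[Theorem 5.13]{GriffinOSP}) by a stabilization argument. This has the virtue of being self-contained within the material already developed in the paper: the algebraic stabilization is correct because the kernel of $R_{n,\la}^\bQ\twoheadrightarrow R_{n,\la,s}^\bQ$ is generated by the images of $x_1^s,\dots,x_n^s$, hence concentrated in degrees $\geq 2s$, and the combinatorial stabilization is correct because a filled cell in row $i'$ of $[\Lambda]\setminus[\la]$ forces at least $i'-1$ inversions of type (I4), so fillings with $\inv(\varphi)=d$ live entirely within the top $d+1$ rows and are insensitive to increasing $s$ beyond that. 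One could tighten the write-up slightly by noting that the (I1), (I2), and (I3) inversions involve only cells in rows $\leq \ell(\la)$, so they are unaffected by $s$ — this is why, as the paper remarks just before the corollary, $\inv$ is a well-defined statistic on $\mathrm{PRD}_{n,\la}$; but that remark already covers this point. What the paper's citation-based approach buys is brevity; what your approach buys is an explicit derivation that does not lean on a result of \cite{GriffinOSP} beyond the one already stated as Theorem~\ref{thm:grFrobTheorem}.
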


\section{Future Work}\label{sec:FutureWork}

We conclude with a number of questions. 
First, the $S_n$-module structure of the cohomology of a Springer fiber, $H^*(\cB^\lambda;\bQ)$, has several constructions.  The construction of Borho and MacPherson~\cite{Borho-MacPherson} uses the Grothendieck--Springer resolution and intersection cohomology, and Brundan and Ostrik~\cite{Brundan-Ostrik} give a similar construction for the cohomology of Spaltenstein varieties.

\begin{question}\label{q:Sn-rep}
Can the $S_n$-module structure on $H^*(Y_{n,\lambda,s};\bQ)$ be directly realized as a specialization of an $S_n$-action on an intersection cohomology complex coming from a generalization of the Grothendieck--Springer resolution?
\end{question}

\begin{remark}
We note that Question~\ref{q:Sn-rep} has recently been answered by Gillespie and the first author in \cite{GG-skew}, where they prove that each $Y_{n,\la,s}$ is equal to one of the varieties studied in \cite{Borho-MacPherson}. They then use this connection to prove new formulas for the symmetric function in the Delta Conjecture at $t=0$.
\end{remark}

Springer fibers have a natural description that works for an arbitrary semisimple reductive algebraic group.  Hence we ask the following.

\begin{question}
Can the varieties $Y_{n,\la,s}$ be generalized to arbitrary Lie type, or at least to the classical types?
\end{question}

Our affine paving does not form a cell decomposition, as the closure of a cell is not always a union of cells.  Hence we can ask the following.

\begin{question}
What are the cell closures for the paving of $Y_{n,(1^k),k}$?  Is there a nice description of the poset that describes when one cell is contained in the closure of another?  Furthermore, consider the directed graph whose vertices correspond to cells and there is an edge from vertex $C$ to $C'$ if the cell $C'$ has a point in the closure of $C$.  This directed graph is acyclic since it is a subgraph of Bruhat order.  Is there a nice description of the poset generated by this directed graph?
\end{question}

Note the answer to this question is not known even for Springer fibers in general, but it may be more tractable in certain special cases.

The following is also known for the Springer fiber in some special cases but not in general.

\begin{question}
 The space $Y_{n,\la,s}$ is singular in general because it is connected and has many irreducible components. Under what conditions are all of the irreducible components smooth?  In particular, are all the irreducible components of $Y_{n,(1^k),k}$ smooth?
\end{question}

One can also ask about other properties of the irreducible components, such as whether they are normal or Cohen-Macaulay.

We have some additional questions in the case $s=2$.

\begin{question}
As noted in Remark~\ref{ex:Hirzebruch}, the special case $Y_{n,\emptyset,2}$ appears in the work of Cautis and Kamnitzer~\cite{Cautis-Kamnitzer} and Russell~\cite{Russell} on connections between Springer fibers and knot homology. Can the homology of all $\Delta$-Springer fibers in the $s=2$ case $Y_{n,\la,2}$ also be described using skein-theoretic relations?
\end{question}

\begin{question}
As noted in Remark~\ref{rmk:s=2}, $\pi$ induces a birational map from a union of components of a 2 row Springer fiber to $Y_{n,\la,s}$.  Is this map an isomorphism?
\end{question}

One way to answer this question would be to figure out enough details of the cells in these irreducible components of the Springer fiber to determine if the map $\pi$ restricted to these cells is an isomorphism.  One can also generalize this question, as a similar argument shows that $\pi$ induces a birational map from a union of certain components of a Spaltenstein variety to $Y_{n,\la,s}$ for arbitrary $s$.

Finally, we have a more speculative question.

\begin{question}
  The variety $Y_{n,\la,s}$ is simultaneously the projected image of a Spaltenstein variety and a closed subvariety of a Steinberg variety,
\begin{equation}
    \cB_{(1^n,s-1,\dots,s-1)}^\Lambda \twoheadrightarrow Y_{n,\la,s}\hookrightarrow \{V_\bullet\in \Fl_{(1^n,(n-k)(s-1))}(\bC^K) \st N_\Lambda V_i\subseteq V_i \text{ for all }i\}.
\end{equation}
Both Steinberg varieties and Spaltenstein varieties appear in Borho and MacPherson's study of partial resolutions of the nilpotent cone~\cite{Borho-MacPherson}, and Spaltenstein varieties have connections to representations of the general linear groups~\cite{Braverman-Gaitsgory}, crystals~\cite{Malkin} and quiver varieties~\cite{Nakajima1,Nakajima2}. Do the varieties $Y_{n,\la,s}$ have any applications in these settings?
\end{question}

\section{Acknowledgements}

We are grateful to Sara Billey, Maria Gillespie, Erik Insko, Abel Lacabanne, Isabella Novik, Julia Pevtsova, Martha Precup, Claudio Procesi, Brendon Rhoades,  Pedro Vaz, Arik Wilbert, and Andy Wilson for helpful conversations and comments. Thanks especially to Leonardo Mihalcea for useful conversations about Hirzebruch surfaces. The first author would like to thank the Institute for Computational and Experimental Research Mathematics (ICERM) for their hospitality, where he was a resident in Spring 2021 during which part of the current work was completed.



\bibliographystyle{hsiam}
\bibliography{Springer}





\end{document}